\numberwithin{equation}{section}
\newtheorem{maintheorem}{Theorem}[section]
\newtheorem{theorem}{Theorem}[section]
\newtheorem{lemma}[theorem]{Lemma}
\newtheorem{proposition}[theorem]{Proposition}
\newtheorem{corollary}[theorem]{Corollary}
\newtheorem{fact}[theorem]{Fact}
\newtheorem{definition}[theorem]{Definition}
\newtheorem{remark}[theorem]{Remark}
\newtheorem{conjecture}[theorem]{Conjecture}
\newtheorem{claim}[theorem]{Claim}
\newtheorem{question}[theorem]{Question}
\newtheorem*{open}{Question}
\def\E{\mathop{\mathbb E}}
\newcommand{\PP}{\mathscr{P}}
\renewcommand{\Pr}{ \mathrm P}
\newcommand{ \av}{ \mathrm{ave} }
\newcommand{ \rel}{ t_{\mathrm{rel}} }
\newcommand{ \h}{ \mathrm{H} }
\newcommand{ \TV}{ \mathrm{TV} }
\newcommand{\la}{\lambda}
\newcommand{\ent}{\mathrm{Ent}_{\pi} }
\newcommand{\entt}{\mathrm{Ent} }
\newcommand{\con}{\mathrm{Con} }
\newcommand{\Prr}{\mathbb{P} }
\newcommand{\cT}{\mathcal{T} }
\DeclareMathSymbol{\leqslant}{\mathalpha}{AMSa}{"36} 
\DeclareMathSymbol{\geqslant}{\mathalpha}{AMSa}{"3E} 
\DeclareMathSymbol{\eset}{\mathalpha}{AMSb}{"3F}     
\renewcommand{\leq}{\;\leqslant\;}                   
\DeclareMathOperator*{\Var}{\mathrm Var}      
\renewcommand{\epsilon}{\varepsilon}
\newcommand{\N}{\mathbb N}
\newcommand{\R}{\mathbb R}
\newcommand{\Z}{\mathbb Z}
\begin{document}

\title{A characterization of  $L_{2}$ mixing and hypercontractivity via hitting times and maximal inequalities}
\author{Jonathan Hermon
\thanks{
Department of Statistics, UC Berkeley, USA. E-mail: {\tt jonathan.hermon@stat.berkeley.edu}.}
\and
Yuval Peres
\thanks{
Microsoft Research, Redmond, Washington, USA. E-mail: {\tt peres@microsoft.com}.}
}
\date{}
\maketitle

\begin{abstract}
There are several works characterizing the total-variation mixing time of a reversible Markov chain in term of natural  probabilistic concepts such as stopping times and hitting times. In contrast, there is no known analog for the $L_{2}$ mixing time, $\tau_{2}$ (while there are sophisticated analytic tools to bound $ \tau_2$, in general they do not determine $\tau_2$ up to a constant factor and they lack a probabilistic interpretation). In this work we show that $\tau_2$ can be characterized up to a constant factor using hitting times distributions. We also derive a new extremal characterization of the Log-Sobolev constant, $c_{\mathrm{LS}}$, as a weighted version of the spectral gap. This characterization yields a probabilistic interpretation of $c_{\mathrm{LS}}$ in terms of a hitting time version of hypercontractivity. As applications of our results, we show that (1) for every reversible Markov chain, $\tau_2$ is robust under addition of self-loops with bounded weights, and (2) for weighted nearest neighbor random walks on trees, $\tau_2 $ is robust under bounded perturbations of the edge weights.
\end{abstract}

\paragraph*{\bf Keywords:}
{\small Mixing-time, finite reversible Markov chains, maximal inequalities, hitting times, hypercontractivity, Log-Sobolov inequalities, relative entropy, robustness of mixing times.
}
\newpage


\section{Introduction}
There are numerous essentially equivalent characterizations of mixing in $L_1$ (e.g.~\cite[Theorem 4.6]{aldous} and \cite{peres2015mixing}) of a finite reversible Markov chain. Some involve natural probabilistic concepts such as couplings, stopping times and hitting times (see \S~\ref{s: related}). In contrast, (paraphrasing Aldous and Fill \cite{aldous} last sentence of page 155, which mentions that there is no $L_2$ counterpart to \cite[Theorem 4.6]{aldous}) while there are several sophisticated analytic and geometric tools for bounding the $L_2$ mixing time, $\tau_2$, none of them has a probabilistic interpretation, and none of them determines $\tau_2$ up to a constant factor.

\medskip

 In this work we provide probabilistic characterizations in terms of hitting times distributions for the $L_2$ mixing time and also for the mixing time  in relative entropy, $\tau_{\entt}$ (see \eqref{eq: taupeps} and \eqref{eq: tauent} for definitions), of a reversible Markov chain (Theorem \ref{thm: mainintro}).

\medskip

While the spectral gap is a natural and simple parameter, the \emph{Log-Sobolev constant} (see \eqref{eq: deftl}), $c_{\mathrm{LS}} $, is a more involved quantity. When one first encounters $c_{\mathrm{LS}} $, it may seem like an artificial parameter that ``magically" gives good bounds on $\tau_2$. We give a new extremal characterization of the Log-Sobolev constant as a weighted version of the spectral gap.   This characterization gives a direct link between $c_{\mathrm{LS}} $ and $\tau_2$ (answering a question asked by James Lee, see Remark \ref{rem:JL}) and can be interpreted probabilistically as a hitting-time version of hypercontractivity (see the discussion following Fact \ref{fact: hyper2}). We note that recently, Cattiaux and Guillin \cite{cat} established a different connection between hitting times distributions and the existence of a Log-Sobolev inequality for diffusions. 

We present our main results in the continuous-time setup.   All of our results can be extended to the setup of discrete-time  chains (see \S~\ref{s:discave}). We note that most of our results can be extended to the general setup of ergodic Markov chains. However, working in such generality leads to many technical difficulties which we chose to avoid for the sake of clarity of presentation.   

\subsection{Characterizations of $\tau_2$ and $\tau_{\entt}$ using hitting times}

We now describe the aforementioned characterizations of $\tau_2$ and $\tau_{\entt}$. More refined versions will be given later on in Theorems \ref{thm: main} and \ref{thm: main3}. Recall that for a Markov chain $(X_t)_{t \ge 0}$ with state space $\Omega$, the \emph{hitting-time} of a set $A \subset \Omega $ is $T_A:=\inf \{t:X_t \in A \} $.  We say that $A$ is connected if $\Pr_{a}[T_b<T_{A^c}]>0$, for all $a,b \in A $.  We denote by $\mathrm{Con}_{\delta}$ the collection of all connected sets $A$ satisfying $\pi(A) \le \delta$, where throughout, $\pi$ shall denote the stationary distribution of the chain. Denote
\begin{equation}
\label{eq: rhorhoent}
\rho:=\max_{x \in \Omega}\rho_x \quad \text{and} \quad \rho_{\entt}:= \max_{x \in \Omega}\rho_{\entt,x}, \quad \text{where}
\end{equation}
\begin{equation}
\label{eq: rhosubx}
\begin{split}
& \rho_{x}:= \min \{t: \Pr_{x}[T_{A^c}>t] \le \pi(A)+\frac{1}{2} \sqrt{ \pi(A)\pi(A^c)} \text{ for all } A \in \mathrm{Con}_{1/2}  \},
\\ & \rho_{\entt,x}:= \min \{t: \Pr_{x}[T_{A^c}>t] \le \min \left( \frac{C_{\entt}}{|\log \pi(A)|},\frac{99}{100}  \right),   \text{ for all } A \in \mathrm{Con}_{1/2}  \},
 \end{split}
\end{equation}
for some absolute constant $C_{\entt} >0 $ to be determined later (in \S~\ref{s:ideaslower} in the discussion following \eqref{eq: Lagrange}). Note that allowing $A  $ above to range over all $A \subset \Omega$ such that $\pi(A) \le 1/2$ does not change the values of $\rho_x $ and $\rho_{\entt,x}$.
\begin{maintheorem}
\label{thm: mainintro}
There exist absolute constants $C_{1},C_2,C_3$ such that for every irreducible reversible Markov chain on a finite state space
\begin{equation}
\label{eq: main3intro}
 \quad \rho \le \tau_{2} \le \rho + C_1/c_{\mathrm{LS}}  \le C_{2} \rho.
\end{equation}
\begin{equation}
\label{eq: main3'intro}
 \rho_{\entt} \le \tau_{\entt}  \le C_{3} \rho_{\entt}.
\end{equation}
\end{maintheorem}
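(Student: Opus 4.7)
The plan is to prove four inequalities: the two lower bounds $\rho \le \tau_2$ and $\rho_{\entt} \le \tau_{\entt}$, the intermediate upper bound $\tau_2 \le \rho + C_1/c_{\mathrm{LS}}$, the Log--Sobolev estimate $C_1/c_{\mathrm{LS}} \le (C_2 - 1)\rho$, and the entropy upper bound $\tau_{\entt} \le C_3 \rho_{\entt}$. The two lower bounds are soft and follow directly from the definitions of $\tau_2$ and $\tau_{\entt}$; the three upper bounds absorb most of the work and rely on the paper's hypercontractivity and maximal-inequality machinery.

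For the lower bound $\rho \le \tau_2$, I would start from $\Pr_x[T_{A^c} > t] \le P_t(x, A)$ (valid for any $A$, connected or not) and apply Cauchy--Schwarz with the mean-zero centering $\mathbf{1}_A - \pi(A)$:
\[
|P_t(x,A) - \pi(A)| = \bigl|\langle P_t(x,\cdot)/\pi - 1,\; \mathbf{1}_A - \pi(A) \rangle_\pi\bigr| \le \|P_t(x,\cdot)/\pi - 1\|_{L_2(\pi)} \cdot \sqrt{\pi(A)\pi(A^c)}.
\]
At $t = \tau_2$ the first factor is at most $1/2$, which is precisely the bound defining $\rho$. For $\rho_{\entt} \le \tau_{\entt}$ I would use the same scheme with $D(\mu\|\pi) \ge d(\mu(A)\|\pi(A))$ (the Gibbs inequality applied to the event $A$), combined with the elementary calculus estimate $d(a\|p) \ge a\log(a/p) - (a-p)$: when $a$ is bounded away from $1$ this forces $a - p = O(D/|\log p|)$, and otherwise the universal cap $a \le 99/100$ built into $\rho_{\entt}$ takes over.

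For the upper bound $\tau_2 \le \rho + C_1/c_{\mathrm{LS}}$, the plan is a two-stage decomposition. The first stage runs the chain for time $\rho$: the hitting-time data in the definition of $\rho$ should translate, via a maximal inequality for the density $P_t(x,\cdot)/\pi$ (this is where the paper's maximal-inequality machinery enters), into an $L_q(\pi)$ bound on $P_\rho(x,\cdot)/\pi - 1$ for some fixed $q > 1$. The second stage runs for an additional $C_1/c_{\mathrm{LS}}$ time and invokes standard hypercontractivity, which upgrades the $L_q$ estimate to $L_2$ in time $(4 c_{\mathrm{LS}})^{-1}\log\frac{1}{q-1}$. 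To close the chain with $C_1/c_{\mathrm{LS}} \le (C_2-1)\rho$, I would invoke the paper's announced extremal characterization of $c_{\mathrm{LS}}$ as a weighted spectral gap: applied to a near-extremal test function $f$, this reduces $1/c_{\mathrm{LS}}$ to a Dirichlet-type quantity that one bounds by $\rho$ by decomposing sub-level sets of $f$ into connected pieces of mass $\le 1/2$ and applying the hitting-time inequality defining $\rho$ on each.

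The entropy upper bound $\tau_{\entt} \le C_3 \rho_{\entt}$ should follow a parallel route but without the hypercontractivity detour, since the factor $1/|\log \pi(A)|$ built into $\rho_{\entt}$ is already calibrated to the logarithmic scale of relative entropy. The plan is to differentiate $D(P_t(x,\cdot)\|\pi)$ and estimate the resulting entropy-production functional in terms of a supremum of survival probabilities over connected sets (again via a maximal inequality), so that the hitting-time hypothesis at time $\rho_{\entt}$ delivers a direct differential decay inequality that integrates to $D \le \mathrm{const}$ at time $O(\rho_{\entt})$. The hardest step throughout, I anticipate, is establishing the maximal inequality that converts control of hitting-time distributions into an $L_q(\pi)$ (or analogous entropic) bound on the transition density --- this is the fundamental bridge between the probabilistic and analytic sides of the theorem, and once it is in place the rest is a packaging of hypercontractivity and Log--Sobolev calculus.
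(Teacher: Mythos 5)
Your two lower bounds are correct and at heart reproduce the paper's Proposition~4.1 by different means: your Cauchy--Schwarz pairing against $1_A - \pi(A)$ (whose $L_2(\pi)$ norm is $\sqrt{\pi(A)\pi(A^c)}$) and the binary data-processing bound on relative entropy identify the same minimizers as the paper's Lagrange computation, and your derivation is arguably cleaner. Your plan for $\tau_2 \le \rho + C_1/c_{\mathrm{LS}}$ is a genuinely different route. The paper never invokes hypercontractivity here: Proposition~3.11 shows that at time $\rho_x + O(\kappa)$ the escape bound improves from $\pi(A) + \frac{1}{2}\sqrt{\pi(A)\pi(A^c)}$ to $\pi(A)^3$, and then Lemma~5.2 plus one application of Starr's inequality closes directly in $L_2$. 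Your scheme spends the $O(t_{\mathrm{LS}})$ time differently: the maximal-inequality argument at time $\rho$ itself gives (away from the harmless exponentially-small regime) $\pi(\{h_\rho(x,\cdot) > s\}) \lesssim s^{-3/2}\sqrt{\log s}$, hence a bounded $L_q$ norm of the centered density for, say, $q=5/4$, and the dual hypercontractivity estimate $\|H_t\|_{q \to 2} \le 1$ for $t \ge \frac{1}{4c_{\mathrm{LS}}}\log\frac{1}{q-1}$ upgrades to $L_2$. This is plausible and worth noting as an alternative; the paper's boost uses only the spectral gap of restricted chains, with no Log--Sobolev input at that stage.

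Two of your steps, however, have genuine gaps. For $C_1/c_{\mathrm{LS}} \le (C_2-1)\rho$, decomposing sub-level sets of a near-extremal $f$ into connected pieces of mass $\le 1/2$ does not yield a proof that $\kappa \le C\rho$: the paper's Proposition~3.9 instead takes a \emph{single} connected $A$ extremal for $\kappa$ and starts the chain from the quasi-stationary distribution $\mu_A$, for which $\Pr_{\mu_A}[T_{A^c} > \kappa/3] = e^{-\lambda(A)\kappa/3} = \pi(A)^{1/3} > \pi(A) + \frac{1}{2}\sqrt{\pi(A)\pi(A^c)}$ forces $\rho \ge \kappa/3$. The quasi-stationary distribution is the essential object and does not arise from a level-set decomposition of a Dirichlet extremizer; your sketch does not produce it. More seriously, your plan for $\tau_{\entt} \le C_3\rho_{\entt}$ via a differential inequality for $\frac{d}{dt}D(\Pr_x^t\|\pi)$ has no evident implementation: the entropy-production rate is the Fisher information $\mathcal{E}(h_t,\log h_t)$, and there is no visible way to control it by hitting-time data. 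The paper's Theorem~5.2 is a snapshot argument: at $r = \bar\rho_{\entt,x}$ it writes $\Pr_x^r = \delta\mu + (1-\delta)\nu$ with $\nu$ bounded in $L_\infty$ (hence handled cheaply by Poincar\'e over an extra $O(\rel)$ time) and $\mu$ supported on the high-density set $D = \{h_r(x,\cdot) > e^{10}\}$, then uses convexity of $D(\cdot\|\pi)$ and a tail-integral/maximal-inequality bound on $\delta D(\mu\|\pi) = \delta|\log\delta| + \mathbb{E}_\pi[g\log g]$. That decomposition, not a differential inequality, is what lets the $1/|\log\pi(A)|$ calibration of $\rho_{\entt}$ bite, and I do not see how to salvage the route you propose.
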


Note that in the definitions of $\rho$ and $\rho_{\entt}$, the smaller $A$ is, the smaller we require the chance of not escaping it by time  $\rho$ or $\rho_{\entt}$, respectively, to be. In other words, the smaller $A$ is,
the higher the ``penalty" we assign to the case the chain did not escape from it. As we explain in \S~\ref{s:ideaslower}, the first inequalities in \eqref{eq: main3intro}-\eqref{eq: main3'intro} are easy and even somewhat  ``naive".

\subsection{A new extremal characterization of the Log-Sobolev time.}

A lot of attention has been focused on inequalities that interpolate between the
Log-Sobolev inequality and the Poincar\'e (spectral gap) inequality (e.g.~\cite{mod,reverse}). Using similar ideas as described above we prove a new extremal characterization (up to a constant factor) of the \emph{Log-Sobolev constant} (Theorems \ref{thm: kappa=tell}), $c_{\mathrm{LS}} $ (see \eqref{eq: deftl} for a definition). The \emph{Log-Sobolev time} is defined as $t_{\mathrm{LS}}:=1/c_{\mathrm{LS}} $.

 The aforementioned characterization has a relatively simple form which does not involve any entropy. Instead, it describes the Log-Sobolev constant as a weighted version of the spectral gap. This characterization provides some insights regarding the hierarchy of the aforementioned inequalities. Before presenting it, we first need a few definitions.

\medskip

The \emph{time-reversal} of $P$ is defined as $P^*(x,y):=\pi(y)P(y,x)/\pi(x)$. This is the dual operator of $P$ w.r.t.~$L_2(\Omega, \pi )$. We say that $P$ is \emph{reversible} if $P=P^*$. Denote $Q:=(P+P^*)/2$. Note that $Q=Q^*$. The \emph{spectral gap} of $P$, denoted by $\la$, is defined as the smallest non-zero eigenvalue of $I-Q$. The relaxation-time is defined as $\rel:=1/\la$. Let $A \subsetneq\ \Omega $.  Let  $Q_A$ (resp.~$P_A$) be the restriction of $Q$ (resp.~$P$) to $A$. Note that $Q_A$ and $P_A$ are substochastic. The spectral gap of $P_A$, denoted by $\la(A)$, is defined as the minimal eigenvalue of $I-Q_A$. Denote $\rel(A):=1/\la(A)$. Denote
\begin{equation}
\kappa:=1/\alpha, \quad \alpha:=\min_{A \in \con_{1/2} }\alpha(A), \quad \text{where} \quad  \alpha(A):=\la(A)/|\log \pi(A) |.
\end{equation}
As mentioned earlier, $\alpha$ is a weighted version of $\la$ since (\cite[Lemma 4.39]{aldous} and \cite[(1.4)]{spectral})
\begin{equation}
\label{eq: laAla}
\la /2 \le  \min_{A \in \con_{1/2} }\la(A) \le \la, \quad \text{and so} \quad \rel \log 2 \le \kappa.
\end{equation}
\begin{maintheorem}
\label{thm: kappa=tell}
For every irreducible Markov chain on a finite state space
\begin{equation}
\label{eq: kappa=tell}
\kappa \le t_{\mathrm{LS}} \le 2(\kappa +  \rel (1+ \log 49)) \le 2(1+(1+ \log 49) /\log 2 ) \kappa < 17 \kappa .
\end{equation}
\end{maintheorem}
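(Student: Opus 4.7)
\textbf{Lower bound $\kappa \leq t_{\mathrm{LS}}$.} For each $A \in \con_{1/2}$, I would test the log-Sobolev inequality with the $L^2(\pi)$-normalized nonnegative principal Dirichlet eigenfunction $g_A$ of $Q_A$, extended by zero off $A$. Then $\EE(g_A, g_A) = \la(A)$, while $\mu_A := g_A^2\,\pi$ is a probability measure supported on $A$, so by the data-processing (grouping) inequality for relative entropy applied to the partition $\{A, A^c\}$,
\[
\entt_\pi(g_A^2) \;=\; D(\mu_A \,\|\, \pi) \;\geq\; |\log \pi(A)|.
\]
Substituting into $\EE(g_A, g_A) \geq c_{\mathrm{LS}}\entt_\pi(g_A^2)$ yields $\la(A) \geq c_{\mathrm{LS}}|\log \pi(A)|$, i.e., $c_{\mathrm{LS}} \leq \alpha(A)$; taking the infimum gives $\kappa \leq t_{\mathrm{LS}}$.

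\textbf{Upper bound $t_{\mathrm{LS}} \leq 2(\kappa + \rel(1+\log 49))$.} The plan is to show $\entt_\pi(f^2) \leq 2\bigl[\kappa + (1+\log 49)\rel\bigr]\EE(f,f)$ in three stages. (a)~Rothaus's lemma $\entt_\pi(f^2) \leq \entt_\pi((f - \bar f)^2) + 2\Var_\pi(f)$ combined with Poincar\'e $\Var_\pi \leq \rel\,\EE$ absorbs a $2\rel\,\EE$ term and reduces matters to centered $f$. (b)~Splitting $f = f_+ - f_-$ into positive and negative parts (disjoint supports), the subadditivity $\entt_\pi(f^2) \leq \entt_\pi(f_+^2) + \entt_\pi(f_-^2)$ against $\EE(f,f) \geq \EE(f_+, f_+) + \EE(f_-, f_-)$ reduces the task to proving $\entt_\pi(h^2) \leq \kappa\,\EE(h,h) + (1+\log 49)\,\rel\,\EE(h,h)$ for $h \geq 0$ with $\E h^2 = 1$. (c)~For such $h$, set $B := \{h > 7\}$. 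By Markov $\pi(B) \leq 1/49 < 1/2$, and after passing to connected components the hypothesis gives $\la(B) \geq \alpha\log 49$. The truncation $u := (h - 7)_+$ is supported on $B$, is $1$-Lipschitz in $h$ (so $\EE(u,u) \leq \EE(h,h)$), and $\|u\|_2^2 \leq \EE(h,h)/\la(B)$. Using the identity $\entt_\pi(v) = \E v \cdot \bigl[|\log\pi(B)| + D(v\pi/\E v \,\|\, \pi|_B/\pi(B))\bigr]$ for $v$ supported on $B$, the peak entropy contribution is bounded by $|\log\pi(B)|/\la(B) \cdot \EE(h,h) = \kappa\,\EE(h,h)$, while the intrinsic residual on $B$ is of lower order. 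The bulk contribution (from $h \leq 7$) is controlled by Poincar\'e together with a variance-to-entropy comparison for functions bounded by $7$, which supplies the coefficient $1+\log 49$. The third inequality in \eqref{eq: kappa=tell} follows from $\rel\log 2 \leq \kappa$ (from \eqref{eq: laAla}) and the arithmetic $2\bigl(1 + (1+\log 49)/\log 2\bigr) < 17$.

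\textbf{Main obstacle.} The central difficulty is the entropy decomposition in step~(c): because $x \log x$ is non-linear, $\entt_\pi(h^2)$ does not split cleanly into peak and bulk parts, and the cross-terms must be controlled without spoiling the factor $\kappa$. The threshold $7$ is calibrated so that $\pi(B) \leq 1/49$ enforces $|\log\pi(B)| \geq \log 49 \geq 1$, letting the peak contribution be absorbed exactly into $\kappa = \alpha^{-1}$, while the bulk bound $h \leq 7$ yields the sharp entropy-variance constant $1+\log 49$. Matching these two estimates to produce the precise numerical coefficients $2\kappa$ and $2(1+\log 49)\rel$ in the stated form is where the detailed calculation lies.
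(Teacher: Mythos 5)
Your lower bound argument is correct and self-contained (testing the LSI against the normalized Dirichlet eigenfunction of $Q_A$ and applying data processing to the partition $\{A,A^c\}$), which is a nice touch since the paper simply cites \cite{spectral} for this direction. Your upper bound, however, follows a route that is \emph{completely different} from the paper's, and the key step is not actually carried out. The paper does not attack $\mathrm{Ent}_\pi(f^2)$ directly. Instead it invokes Fact~\ref{fact: hyper2} with $q=4$, which reduces the claim $t_{\mathrm{LS}} \le 2\kappa + 2\rel(1+\log 49)$ to the single operator-norm estimate $\|S_{\kappa/2}\|_{2\to 4}\le 7$. That estimate is then proved by a layer-cake decomposition of $\|S_{\kappa/2}f\|_4^4$ over the level sets $A_s = \{S_{\kappa/2}f\ge s\}$, with the superlevel set $B_s=\{f^*>s/2\}$ of the \emph{maximal function} $f^*$ controlled via Starr's inequality ($\|f^*\|_2\le 2\|f\|_2$) and the crucial hitting-time estimate $\mathbb{E}_{\pi_{B_s}}[f^2(Y_{\kappa/2})1_{T_{B_s^c}>\kappa/2}]\le 1$, which converts the definition of $\kappa$ into the bound $\pi(A_s)\le 4s^{-2}\pi(B_s)$. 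Entropy never appears in this part of the argument; that is precisely how the paper sidesteps the difficulty you flag.

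The gap in your proposal is exactly where you locate it: step~(c) is a sketch, not a proof. You split $h$ at the threshold $7$ into $u=(h-7)_+$ and the bulk $\min(h,7)$, correctly observe $\EE(u,u)\le\EE(h,h)$ and $\|u\|_2^2\le\EE(h,h)/\lambda(B)$, and correctly derive the identity $\mathrm{Ent}_\pi(v) = \E_\pi[v]\bigl[|\log\pi(B)|+D(\tilde v\,\|\,\pi_B)\bigr]$ for $v$ supported on $B$. But $h^2$ is not $u^2$ plus a function of $\min(h,7)$: on $B$ one has $h^2 = u^2 + 14u + 49$, so the ``peak'' and ``bulk'' overlap, the identity applies to $\mathrm{Ent}_\pi(u^2)$ rather than to a summand of $\mathrm{Ent}_\pi(h^2)$, and the cross terms are never estimated. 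The claim that ``the intrinsic residual on $B$ is of lower order'' is unsupported: $D(\tilde v\,\|\,\pi_B)$ can be as large as $\log(1/\min_{x\in B}\pi_B(x))$, which has no a priori relation to $\rel$ or $\kappa$. The ``variance-to-entropy comparison for functions bounded by $7$'' is not stated as a precise lemma, and your target in (c) ($\kappa\EE + (1+\log 49)\rel\,\EE$) is not consistent with the budget remaining after (a) and (b) (which is $2\kappa\EE + 2(\log 49)\rel\,\EE$). You may well be able to close this by importing a Bobkov--G\"otze / Rothaus-type threshold argument, but as written there is no proof: the decomposition that ``does not split cleanly'' is the theorem, and you stop exactly there.
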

\begin{remark}
The inequality $\kappa \le t_{\mathrm{LS}}  $ is easy. See Lemma 4.2 in \cite{spectral} for a stronger inequality. The harder and more interesting direction is $t_{\mathrm{LS}} \le C \kappa$, which is an  improvement over the well-known inequality $t_{\mathrm{LS}} \le \rel \frac{\log [1/\pi_* - 1]}{1-2\pi_*} $, where $\pi_*:=\min_{x \in \Omega} \pi(x) $ \cite[Corollary A.4]{diaconis}.
\end{remark}
\begin{remark}
\label{rem:JL}
Despite the fact that $t_{\mathrm{LS}}$ is a geometric quantity, Logarithmic Sobolev inequalities have a strong analytic flavor and little probabilistic interpretation. For instance, the proof of the inequality $t_{\mathrm{LS}}\le 2\tau_2(1/e)$ \cite[Corollary 3.11]{diaconis} (where $\tau_2(\epsilon) $ is the   $L_2$ mixing time defined in \eqref{eq: taupeps}) relies on Stein's interpolation Theorem for a family of analytic operators. Our analysis yields a probabilistic proof of the fact that $t_{\mathrm{LS}}\le C \tau_{2}$ for reversible chains.
The problem of finding such a proof was posed by James Lee at  the Simons institute in 2015.
Indeed by Theorem \ref{thm: kappa=tell} and  \eqref{eq: com1},  $t_{\mathrm{LS}}/17 \le  \kappa \le 3 \rho \le 3 \tau_2  $. The second inequality is relatively easy, and is obtained by analyzing hitting times, rather than by analytic tools. As we show in \S~\ref{s:ideaslower},  the inequality $\rho \le \tau_2$   also has a probabilistic interpretation.
\end{remark}
We note that while some effort was made to make most constants explicit in order to demonstrate that they are not large, we did not attempt to optimize constants. We use the convention that $C,C',C_1,\ldots$ (resp.~$c,c',c_1,\ldots $) denote positive absolute constants which are sufficiently large (resp.~small). Different appearances of the same constant at different places may refer to different numeric values.
\subsection{Organization of this work}
  In \S~\ref{s:app} we present some applications of our main results, two of which concern robustness of mixing times and one (\S~\ref{s:discave}) concerns a comparison result between the ($L_2$ and relative entropy) mixing times of the discrete-time averaged chain (defined in \S~\ref{s:discave}) and of the continuous-time chain. In \S~\ref{s:pre}  we provide some background about mixing-times, the spectral gap and the Log-Sobolev constant and present some auxiliary results about maximal inequalities and hitting times. In \S~\ref{s:ideaslower} we prove the lower bounds on $\tau_2$ and $\tau_{\entt}$ from \eqref{eq: main3intro} and \eqref{eq: main3'intro} and in \S~\ref{s:ideasupper} we present a sketch of the proof of the upper bound on $\tau_2$ from \eqref{eq: main3intro}.  In \S~5 we prove our main results (we prove Theorem \ref{thm: kappa=tell} and also slightly more refined versions of the upper bounds from \eqref{eq: main3intro} and \eqref{eq: main3'intro}, resp.). The necessary adaptations for the discrete-time setup are given at \S~\ref{s:disc}.  In \S~6 we prove the two applications from \S~\ref{s:app} concerning robustness of mixing times (Corollary \ref{cor:lazinessrobustness} and Theorem \ref{trees}). We conclude with some open problems in \S~7.
\section{Applications}
\label{s:app}
\subsection{Robustness of $\tau_2$ under addition of self-loops of bounded weights.}

\begin{corollary}
\label{cor:lazinessrobustness}
Let $(X_t)$ be a reversible irreducible continuous-time Markov chain on a finite state space $\Omega$  with generator $G$. Let  $(\tilde X_t)$ be a chain with generator $\tilde G $ obtained by multiplying for all $x \in \Omega $ the $x$th row of $G$ by some $r_x \in (1/M,M)$ (for some $M \ge 1$). Then for some absolute constant $C $ the corresponding $L_2$ and relative-entropy mixing times satisfy
\begin{equation}
\label{eq: laziness}
\tilde \tau_2 /(CM \log M ) \le \tau_2 \le (CM \log M) \tilde \tau_2.
\end{equation}
\begin{equation}
\label{eq: laziness2}
\tilde \tau_{\entt} /(CM \log M ) \le \tau_{\entt} \le (CM \log M) \tilde \tau_{\entt}.
\end{equation}
\end{corollary}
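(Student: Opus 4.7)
The plan is to use Theorem \ref{thm: mainintro} to reduce the corollary to comparisons between the hitting-time parameters $\rho,\rho_\entt$ for $X$ and $\tilde\rho,\tilde\rho_\entt$ for $\tilde X$, and to control these comparisons via an explicit coupling together with submultiplicativity of hitting-time tails. By symmetry (apply the statement to $\tilde X$ with perturbation $r_x^{-1}\in(1/M,M)$, which recovers $X$ and has stationary distribution $\pi$), it suffices to establish the upper bounds $\tilde\tau_2\le CM\log M\cdot\tau_2$ and $\tilde\tau_\entt\le CM\log M\cdot\tau_\entt$.

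The structural input is that $\tilde G(x,y)=r_xG(x,y)$ for $y\ne x$ leaves the jump chain unchanged, so $X$ and $\tilde X$ can be realized on a common probability space with a shared jump chain $(Y_n)$ and i.i.d.\ Exp$(1)$ clocks $(E_n)$: the holding times are $\tau_n=E_n/q(Y_n)$ and $\tilde\tau_n=E_n/(r_{Y_n}q(Y_n))$, so $\tilde\tau_n/\tau_n=1/r_{Y_n}\in[1/M,M]$ deterministically. Hence $T_{A^c}/M\le\tilde T_{A^c}\le MT_{A^c}$ for every $A\subseteq\Omega$, and in particular $\Pr_x[\tilde T_{A^c}>Mt]\le\Pr_x[T_{A^c}>t]$. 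Reversibility forces $\tilde\pi(x)=\pi(x)/(r_xZ)$ with $Z=\sum_y\pi(y)/r_y\in[1/M,M]$, so the Radon--Nikodym derivative satisfies $\tilde\pi/\pi\in[M^{-2},M^2]$ pointwise, while connectedness (a property of the jump chain) is preserved.

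The core estimate is $\tilde\rho\le CM\log M\cdot\rho$, with an analogous bound for $\tilde\rho_\entt$; combined with $\tilde\tau_2\le C\tilde\rho$ and $\rho\le\tau_2$ from Theorem \ref{thm: mainintro} this closes \eqref{eq: laziness}. Fix a connected $A$ with $\tilde\pi(A)\le 1/2$ and first treat the principal case $\pi(A)\le 1/2$. The definition of $\rho$ gives $\max_x\Pr_x[T_{A^c}>\rho]\le f(A):=\pi(A)+\tfrac12\sqrt{\pi(A)\pi(A^c)}\le 3/4$, and a strong Markov argument at time $\rho$ yields the submultiplicative iteration $\max_x\Pr_x[T_{A^c}>k\rho]\le f(A)^k$ for every $k\in\N$. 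Comparing against $\tilde f(A):=\tilde\pi(A)+\tfrac12\sqrt{\tilde\pi(A)\tilde\pi(A^c)}\ge \sqrt{\tilde\pi(A)}/(2\sqrt 2)\ge \sqrt{\pi(A)}/(2\sqrt 2\,M)$ shows that $k=\Theta(\log M)$ iterations suffice to guarantee $f(A)^k\le\tilde f(A)$, and the coupling bound then delivers $\max_x\Pr_x[\tilde T_{A^c}>CM\log M\cdot\rho]\le\tilde f(A)$, the defining inequality for $\tilde\rho$. The entropy version \eqref{eq: laziness2} proceeds along the same lines, with $f(A)$ replaced by $\min(C_\entt/|\log\pi(A)|,99/100)$ and the corresponding $\tilde\pi$-target; the inequality $|\log\tilde\pi(A)|\le|\log\pi(A)|+2\log M$ shows that the target for $\tilde\rho_\entt$ is at worst a $\Theta(\log M)$ factor smaller than that for $\rho_\entt$, which is again absorbed by $\Theta(\log M)$ submultiplicative iterations.

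The main obstacle is the exceptional case $\tilde\pi(A)\le 1/2<\pi(A)$, which can arise because $\tilde\pi/\pi$ may fluctuate by a factor $M^2$, so that $A$ may leave $\con_{1/2}$ with respect to $\pi$ even though it lies in it with respect to $\tilde\pi$; in this regime the characterization of $\rho$ does not directly control $\Pr_x[T_{A^c}>t]$. I plan to dispatch this case by switching to $A^c$ (which satisfies $\pi(A^c)<1/2\le\tilde\pi(A^c)$), combining hitting-time estimates for the complementary set with the pointwise bound $\tilde\pi/\pi\in[M^{-2},M^2]$, and again absorbing the resulting loss into the same $\Theta(\log M)$ submultiplicative amplification. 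Alternatively, one can invoke the sharper statements of Theorems \ref{thm: main} and \ref{thm: main3}, which allow the threshold $1/2$ in $\con_{1/2}$ to be relaxed to a larger constant. This transitional case is the technical heart of the proof and is what ultimately forces the final factor to be $M\log M$ rather than a clean multiple of $M$.
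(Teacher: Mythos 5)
Your overall blueprint---reduce via Theorem~\ref{thm: mainintro} to comparing $\rho$ and $\tilde\rho$, realize the two chains on a common jump skeleton with shared exponential clocks to get $T_{A^c}/M\le\tilde T_{A^c}\le M T_{A^c}$, record $\tilde\pi/\pi\in[M^{-2},M^{2}]$, and then use submultiplicativity of escape-time tails---matches the paper's strategy, and your treatment of what you call the principal case ($\pi(A)\le 1/2$ as well as $\tilde\pi(A)\le 1/2$) is correct. The genuine gap is in the exceptional case $\tilde\pi(A)\le 1/2<\pi(A)$, and neither of the two fixes you sketch closes it.

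The ``switch to $A^c$'' idea conflates two different hitting times. For $B:=A^c$ with $\pi(B)<1/2$, the definition of $\rho$ controls $\Pr_x[T_{B^c}>t]=\Pr_x[T_A>t]$, i.e.\ the time to \emph{enter} $A$ --- but what you need is $\Pr_x[T_{A^c}>t]=\Pr_x[T_B>t]$, the time to escape the large set $A$ (equivalently, to hit the small set $B$), and $\rho$ gives no direct handle on that. Moreover, the ``$\Theta(\log M)$ submultiplicative amplification'' is not available here, because the iteration needs a base bound $\max_y\Pr_y[T_{A^c}>t_0]\le q<1$ with $1-q$ bounded below independently of $M$; the best naive base bound for a set with $\pi(A^c)$ as small as $\Theta(M^{-2})$ gives $q\approx 1-\Theta(M^{-2})$, which forces $\Theta(M^2\log M)$ iterations and a final factor of order $M^3(\log M)^2$, not $M\log M$. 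The alternative you mention --- that Theorems~\ref{thm: main} and \ref{thm: main3} permit relaxing the threshold $1/2$ in $\con_{1/2}$ to a larger constant --- is not supported by their statements or proofs: both are stated with $\con_{1/2}$, and the key spectral input $\lambda(A)\ge\lambda/2$ from \eqref{eq: laAla} genuinely requires $\pi(A)\le 1/2$ (and can fail badly as $\pi(A)\to 1$).

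The paper avoids the exceptional case altogether rather than handling it. It introduces the two-parameter quantity $\rho_{\le\delta,\delta'}$ and \emph{first} performs a reduction \emph{within a single chain}: an argument modeled on Proposition~\ref{prop: rhovsbarrho}, using the quasi-stationary bound \eqref{eq: stationary<quasi} and $\lambda(A)\ge\lambda/2$ for $\pi$-small $A$ together with the median-set trick, gives $\rho\le\rho_{\le\frac{1}{2M^2},\frac12}+C_1\,\rel\log M$ (and likewise for the tilde chain). Only then does it transfer between chains via the coupling, at the point where the set has $\pi$-measure at most $\frac{1}{2M^2}$, so that the measure comparison forces the corresponding $\tilde\pi$-measure to be at most $1/2$ and the exceptional case never arises; submultiplicativity supplies $\tilde\rho_{\le\frac12,\delta'}\le C_0\tilde\rho|\log\delta'|$, and $\rel\le M\widetilde{\rel}$ (from the extremal characterization of the spectral gap) absorbs the reduction cost. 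This ``reduce to small sets within one chain, \emph{then} transfer'' ordering, together with the auxiliary parameter $\rho_{\le\delta,\delta'}$, is the missing ingredient in your proposal, and it is precisely what replaces your exceptional-case analysis.
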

This corollary, proved in \S~\ref{s:rob}, is an analog of \cite[Corollary 9.5]{peres2015mixing}, which gives the corresponding statement for $\tau_1$. While the statement  is extremely intuitive, surprisingly, it was recently shown that it may fail for simple random walk on an Eulerian digraph \cite[Theorem 1.5]{Perla}.

Observe that the generator $G$ of a reversible chain on a finite state space $\Omega$, can be written as $r(P-I)$, where $P$ is the transition matrix of some nearest neighbor weighted random walk on a network which may contain some weighted self-loops. The operation of multiplying the $x$th row of $G$ by some $r_x \in (1/M,M)$ for all $x \in \Omega$ is the  same as changing $r$ above by some constant factor and changing the weights of the self-loops by a constant factor.
\begin{remark}
Similarly, one can show that under reversibility the $L_2$ mixing time in the discrete-time lazy setup is robust under changes of the holding probabilities. More precisely, for every $\delta \in (0, 1/2] $ if we consider a chain that for all $x \in \Omega$, when at state $x$ it stays put w.p.~$\delta \le a(x) \le 1-\delta $ and otherwise moves to state $y$ w.p.~$P(x,y)$ (where $P$ is reversible), then its $L_2$ mixing time can  differ from the $L_2 $ mixing time of the chain with $a(x)=1/2$ for all $x$ only by a factor of $C \delta^{-1}|\log \delta |$.
\end{remark}
\subsection{Robustness of $\tau_{\infty}$ for trees.}

Recall that for reversible chains the $L_2$ mixing time, $\tau_2$, determines the $L_p$-mixing time up to a factor $c_p$ for all $1 < p \le \infty $   (see \eqref{eq: L2Linfty}). Denote the $L_p$ mixing time of simple random walk on a finite connected simple graph $G$ by $\tau_p(G)$. Kozma \cite{kozma} made the following conjecture:
\begin{conjecture}[\cite{kozma}]
\label{con: Kozma}
Let $G$ and $H$ be two finite $K$-roughly isometric graphs of maximal degree $\le d$. Then
\begin{equation}
\label{eq: Gady}
\tau_{\infty}(G) \le C(K,d) \tau_{\infty}(H).
\end{equation}
\end{conjecture}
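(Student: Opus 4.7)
The plan is to leverage Theorem~\ref{thm: mainintro}, which characterizes $\tau_2$ up to absolute constants in terms of the hitting-time functional $\rho$. Since for reversible chains $\tau_\infty$ and $\tau_2$ agree up to a universal factor (for simple random walk on a bounded-degree graph, a factor depending only on $d$), Conjecture~\ref{con: Kozma} would follow from showing $\rho(G) \le c(K,d)\,\rho(H)$ whenever $G$ and $H$ are $K$-roughly isometric graphs of maximum degree at most $d$. I would then reduce the problem to a comparison of escape probabilities from connected subsets on the two sides.

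First I would fix a rough isometry $\phi : V(G) \to V(H)$ and, for each connected $A \subseteq V(G)$ with $\pi_G(A) \le 1/2$, build a ``thickened'' companion $\widetilde A \subseteq V(H)$ as the $K$-graph-neighborhood of $\phi(A)$. Bounded degree combined with rough isometry should guarantee that $\widetilde A$ is connected, that $\pi_H(\widetilde A)/\pi_G(A)$ lies in a range controlled by $(K,d)$, and that a quasi-inverse of $\phi$ produces the analogous correspondence in the reverse direction. The core step is then to transfer an escape-time tail $\Pr_x^G[T_{A^c} > t] \le \gamma(\pi_G(A))$ into a bound of the same shape, $\Pr_{\phi(x)}^H[T_{\widetilde A^c} > c(K,d)\,t] \le \gamma'(\pi_H(\widetilde A))$. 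I would attempt this via effective-resistance comparisons: short-circuiting fibers of $\phi$ should control the resistance between $x$ and $A^c$ in $G$ by that between $\phi(x)$ and $\widetilde A^c$ in $H$ up to a multiplicative factor $c(K,d)$, and standard reversibility identities then convert this into a comparison of tail distributions of $T_{A^c}$ and $T_{\widetilde A^c}$ after a diffusive time rescaling of order $K^2$.

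The main obstacle lies in matching the sharp shape of the threshold appearing in \eqref{eq: rhosubx}. For the relative-entropy version the threshold $C_{\entt}/|\log \pi(A)|$ is essentially insensitive to constant-factor distortions of $\pi(A)$, so I would expect an analogue of Conjecture~\ref{con: Kozma} for $\tau_{\entt}$ to be comparatively tractable. For $\rho_x$, however, the threshold $\pi(A) + \tfrac12\sqrt{\pi(A)\pi(A^c)}$ is scale-sensitive once $\pi(A) \downarrow 0$, so any multiplicative distortion of either $\pi(A)$ or of the escape probability must be absorbed by a refined construction of the set correspondence, for instance by taking a slightly shrunken rather than thickened companion on one side and using monotonicity, or by detouring through the entropy characterization and converting back via Theorem~\ref{thm: kappa=tell}. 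Closing the constants uniformly as $\pi(A) \to 0$ is where I expect the real difficulty to sit, and is likely why this statement has remained open despite its intuitive appeal.
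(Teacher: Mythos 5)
You are attempting to prove a statement that the paper does not prove and that, in fact, is \emph{false}. The statement you were given is Conjecture~\ref{con: Kozma} (Kozma's conjecture), and the surrounding discussion in the paper makes the situation explicit: Ding and Peres showed the analogue for $\tau_1$ fails, and the paper then states that ``In \cite{hermon} the first author constructed a counterexample to Conjecture~\ref{con: Kozma}, where also there the key is sensitivity of hitting times.'' So there is no proof of this statement in the paper for your attempt to be compared against; the paper presents a refutation, not a verification.

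This said, your proposal is not without insight, because the obstruction you correctly identify is precisely the mechanism behind the counterexample. You observe that the threshold $\pi(A) + \tfrac12\sqrt{\pi(A)\pi(A^c)}$ in the definition of $\rho_x$ is scale-sensitive as $\pi(A)\downarrow 0$, so that constant-factor distortions of $\pi(A)$ and of escape probabilities, introduced by thickening sets under a rough isometry, cannot be absorbed uniformly. That is exactly the point: the characterization $\tau_2 \asymp \rho$ from Theorem~\ref{thm: mainintro} recasts $\tau_\infty$ in terms of hitting-time tails from \emph{small} sets, and hitting times of small sets are genuinely sensitive to local changes in geometry that a rough isometry cannot control. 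No choice of shrunken versus thickened companion sets, and no detour through the entropy characterization (whose threshold $C_{\entt}/|\log\pi(A)|$ is indeed more forgiving but controls a weaker quantity than $\tau_\infty$), can rescue the argument, because the inequality you are trying to establish simply does not hold. The robustness results the paper \emph{does} prove using this machinery are much narrower: robustness under bounded perturbations of self-loop weights (Corollary~\ref{cor:lazinessrobustness}) and, for trees, robustness of $\tau_\infty$ under bounded perturbation of edge weights (Theorem~\ref{trees}); the tree case works precisely because hitting times on trees have a tractable one-dimensional structure allowing the step from $\rho_x$ to the single-vertex hitting times $T_{x_\delta}$, and no analogue of that reduction is available for general bounded-degree graphs.
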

It is well-known that \eqref{eq: Gady} is true if one replaces $\tau_{\infty} $ with $t_{\mathrm{LS}}$ (e.g.~\cite[Lemma 3.4]{diaconis}). Ding and Peres \cite{ding} showed that \eqref{eq: Gady} is false if one replaces $\tau_{\infty} $ with $\tau_1$. In part, their analysis relied on the fact that the total variation mixing time can be related to hitting times, which may be sensitive to small changes in the geometry. Hence it is natural to expect that a description of $\tau_{\infty}$ in terms of hitting times might shed some light on Conjecture \ref{con: Kozma}. Indeed this was one of the main motivations for this work. In \cite{hermon} the first author constructed a counterexample to Conjecture \ref{con: Kozma}, where also there the key is sensitivity of hitting times.

\medskip

Peres and Sousi \cite[Theorem 9.1]{peres2015mixing} showed that for weighted nearest neighbor random walks on trees (see \S~\ref{s: trees} for a definition), $\tau_1 $ can change only by a constant factor, as a result of a bounded perturbation of the edge weights.
As an application of Theorem \ref{thm: mainintro} we extend their result to the case of $\tau_2$.
\begin{maintheorem}
\label{trees}
There exists an absolute constant $C$ such that for every finite tree $\mathcal{T}=(V,E)$  with some edge weights $(w_e)_{e \in E}$, the corresponding random walk satisfies that
\begin{equation}
\label{eq: trees}
 \max (\tau_1,t_{\mathrm{LS}}/4) \le \tau_{2} \le \tau_1+C \max (t_{\mathrm{LS}},\sqrt{t_{\mathrm{LS}}\tau_1} ) ,
\end{equation}
Consequently, if $(w'_e)_{e \in E},(w_e)_{e \in E} $ are two edge weights such that $1/M \le w_e/w'_e \le M $ for all $e \in E$, then there exists a constant $C_M$ (depending only on $M$) such that the corresponding $L_{\infty}$ mixing times, $\tau_{\infty} $ and $\tau'_{\infty} $,  satisfy
\begin{equation}
\label{eq: trees2}
\tau_{\infty}'  /C_M \le \tau_{\infty} \le C_{M}  \tau_{\infty}'  .
\end{equation}
\end{maintheorem}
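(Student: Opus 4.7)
The plan is to first establish \eqref{eq: trees} and then derive \eqref{eq: trees2} from it using known robustness results for $\tau_1$ and $t_{\mathrm{LS}}$. For the lower bound $\max(\tau_1,t_{\mathrm{LS}}/4)\le \tau_2$, the inequality $\tau_1\le \tau_2$ is immediate from Cauchy--Schwarz applied in $L_2(\pi)$, and $t_{\mathrm{LS}}/4\le \tau_2$ follows from the classical Diaconis--Saloff-Coste bound $t_{\mathrm{LS}}\le 2\tau_2(1/e)$, consistent with Remark~\ref{rem:JL}.

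For the upper bound I would invoke Theorem~\ref{thm: mainintro} to reduce the problem to bounding $\rho$, since $\tau_2\le \rho+C_1 t_{\mathrm{LS}}$. The tree structure enters through the fact that every $A\in \con_{1/2}$ is a subtree, so the complement decomposes into subtrees hanging off $\partial A$ and the restricted chain $P_A$ admits clean spectral control. Fix $x\in \Omega$ and a subtree $A$ with $\pi(A)\le 1/2$. I would split into two regimes. When $\pi(A)$ is bounded away from $0$ by an absolute constant, $\tau_1$-mixing alone suffices: running for time $\tau_1(\epsilon)$ with $\epsilon$ small gives $\Pr_x[T_{A^c}>\tau_1(\epsilon)]\le \pi(A)+\epsilon\le \pi(A)+\tfrac{1}{2}\sqrt{\pi(A)\pi(A^c)}$. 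When $\pi(A)$ is small, I combine $\tau_1$-mixing with the spectral gap of $P_A$ via
\[
\Pr_x[T_{A^c}>\tau_1+s]\le \sum_{y\in A}\Pr_x[X_{\tau_1}=y]\,\Pr_y[T_{A^c}>s].
\]
The bound $\la(A)\ge |\log \pi(A)|/t_{\mathrm{LS}}$ is an easy consequence of $\kappa\le t_{\mathrm{LS}}$ from Theorem~\ref{thm: kappa=tell}, and together with the standard spectral estimate for $P_A^s$ the second factor decays essentially as $\pi(A)^{cs/t_{\mathrm{LS}}}$ up to polynomial prefactors. Optimising $s$ against the threshold $\tfrac{1}{2}\sqrt{\pi(A)\pi(A^c)}$ and balancing the contributions of the two regimes produces the error term $C\max(t_{\mathrm{LS}},\sqrt{t_{\mathrm{LS}}\tau_1})$ in \eqref{eq: trees}.

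Finally, \eqref{eq: trees2} follows from three ingredients: (i) $\tau_\infty$ and $\tau_2$ are equivalent up to a factor of at most $2$ for reversible chains (see \eqref{eq: L2Linfty}); (ii) $\tau_1$ is robust on trees under bounded perturbations of edge weights by \cite[Theorem~9.1]{peres2015mixing}; and (iii) $t_{\mathrm{LS}}$ is robust under bounded weight perturbations by direct comparison of Dirichlet forms (a bounded weight change scales the Dirichlet form by a bounded factor while leaving the entropy unchanged). Inserting these into \eqref{eq: trees} shows that $\tau_2$, and hence $\tau_\infty$, changes by at most a multiplicative constant $C_M$. The main obstacle will be the quantitative estimate for $\rho$ on trees: extracting the sharper $\sqrt{t_{\mathrm{LS}}\tau_1}$ correction, rather than the cruder $Ct_{\mathrm{LS}}$ that falls straight out of Theorem~\ref{thm: mainintro}, requires a careful Markov-property decomposition and a uniform polynomial-factor bound on the spectral decay of $P_A$ over all subtrees $A$.
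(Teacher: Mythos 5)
Your lower bound and the derivation of \eqref{eq: trees2} from \eqref{eq: trees} are fine, and your reduction to bounding $\rho$ via Theorem~\ref{thm: mainintro} is the correct first move. But the core of your upper-bound argument has a genuine gap. The decomposition
\[
\Pr_x[T_{A^c}>\tau_1+s]\ \le\ \sum_{y\in A}\Pr_x[X_{\tau_1}=y]\,\Pr_y[T_{A^c}>s]
\]
is correct as an inequality, but it does not deliver the required estimate. If you replace $\Pr_x[X_{\tau_1}=y]$ by $\pi(y)$ the sum becomes $\pi(A)\Pr_{\pi_A}[T_{A^c}>s]\le\pi(A)e^{-\la(A)s}$, which is fine, but the error term $\sum_{y\in A}|\Pr_x[X_{\tau_1}=y]-\pi(y)|\,\Pr_y[T_{A^c}>s]$ is controlled only by the total-variation distance at time $\tau_1$, which is merely $\le 1/2$. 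When $\pi(A)$ is small the threshold you must beat is of order $\sqrt{\pi(A)}$, and a constant-size error swamps it. Nor can you dodge this by a pointwise bound: $\Pr_y[T_{A^c}>s]\lesssim e^{-\la(A)s}$ fails for individual $y$ --- the available spectral bound carries a prefactor of order $\pi_A(y)^{-1/2}$, which is a \emph{polynomial in $1/\pi(y)$}, not a polynomial in $1/\pi(A)$, and $\log(1/\pi(y))$ can dwarf $|\log\pi(A)|$. So neither of the two natural ways to cash in the ``spectral decay plus $\tau_1$-mixing'' sketch closes. You also overstate what ``falls straight out of Theorem~\ref{thm: mainintro}'': it gives $\tau_2\le\rho+Ct_{\mathrm{LS}}$, not $\rho\le\tau_1+Ct_{\mathrm{LS}}$; the latter is precisely the tree-specific content to be proved, and it is not a formal consequence of the general machinery.

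The paper's proof rests on two structural ideas that are absent from your sketch. First, Lemma~\ref{lem: leafisworst} reduces to leaf initial states up to an additive $C(t_{\mathrm{LS}}+\sqrt{\rel\tau_1})$; this already injects the $\sqrt{t_{\mathrm{LS}}\tau_1}$ slack and is proved by a Chebyshev/variance argument for the hitting time of $y$ along the unique $x$--$y$ path. Second, and more importantly, the tree structure is used not to decompose $A^c$ into dangling subtrees, but to decompose the \emph{time to hit a specific vertex} $x_\delta$ (a cut vertex on the ray from the leaf $x$ toward the root chosen so that $\pi(\mathcal{T}_{x_\delta})\ge\delta$) as a sum of \emph{independent} increments $T_{v_i}-T_{v_{i-1}}$ along that path. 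Proposition~\ref{prop: Tdelta} then replaces the requirement ``escape every small $A$ quickly'' by ``hit $x_\delta$ quickly'', since once at $x_\delta$ the residual escape time is controlled directly by $\kappa$ via the quasi-stationary bound \eqref{eq: stationary<quasi}. Proposition~\ref{prop: LD} supplies a Chernoff bound for the sum, with each increment's Laplace transform controlled through Kac's formula \eqref{eq: Kac} combined with the quasi-stationary exponential domination; this is exactly where the $\max(t_{\mathrm{LS}},\sqrt{t_{\mathrm{LS}}\tau_1})$ correction is produced, since the variance of $T_{x_\delta}$ is of order $\rel\,\mathbb{E}_x[T_{x_\delta}]\lesssim t_{\mathrm{LS}}\tau_1$. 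You correctly sense at the end that some ``careful Markov-property decomposition'' is needed, but the missing idea is specifically the path-wise independence decomposition of $T_{x_\delta}$, not a uniform pointwise spectral-decay estimate for $P_A$ --- the latter is simply unavailable.
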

\begin{remark}
Since $t_{\mathrm{LS}}$ is robust under a bounded perturbation of the edge weights (e.g.~\cite[Lemma 3.3]{diaconis}), indeed \eqref{eq: trees2} follows from \eqref{eq: trees} in conjunction with the aforementioned $L_1$ robustness of trees (and the fact that $\tau_2 \le \tau_{\infty} \le 2 \tau_2$, see \eqref{eq: L2Linfty}).
\end{remark}
\subsection{Comparison of continuous-time, discrete-time and averaged chains}
\label{s:discave}
Let $(X_{k})$ be a finite irreducible reversible discrete-time Markov chain with transition matrix $P$. Since reversible Markov chains can only have period 2, one may wonder whether it suffices
to average over two consecutive times (i.e.\ to make a single lazy step) in order to avoid near-periodicity issues.  
This motivates considering the following Markov chain. For any $t \ge 1$, denote $A_t:=(P^t+P^{t-1})/2$. The \emph{\textbf{averaged chain}},
 $(X_t^{\mathrm{ave}})_{t
= 0}^{\infty}$,  with initial
state $x$, is a  Markov chain, whose distribution at time $t
\ge 1$ is $A_t(x,\cdot)$, where $A_{t}(x,y):=
(P^{t}(x,y)+P^{t-1}(x,y))/2$. Equivalently, $(X_t^{\mathrm{ave}})_{t
= 1}^{\infty}:=(X_{t-\xi})_{t
= 1}^{\infty}$, where $\xi$ is a  $\mathrm{Bernoulli}(1/2)$ random variable,
 independent of $(X_t)_{t
= 0}^{\infty}$.

\medskip

 We may consider the $L_p$-mixing times of the discrete-time and averaged chains $\tau_{p}^{\mathrm{discete}}(\cdot) $ and $\tau_p^{\av}(\cdot)$, resp., defined in an analogous manner as $\tau_p(\cdot) $, obtained by replacing $h_t(x,y)=H_t(x,y)/\pi(y) $ with $k_t(x,y):=P^t(x,y)/\pi(y) $ and $a_t(x,y):=A_t(x,y)/\pi(y)$, resp. (see \S~\ref{s:def}). Similarly, we may consider the relative-entropy mixing times of the discrete-time and averaged chains $\tau_{\entt}^{\mathrm{discrete}}(\cdot) $ and $\tau_{\entt}^{\av}(\cdot)$, resp.. We define $\rho_{{\mathrm{discete}}} $ and $\rho_{\entt}^{{\mathrm{discete}}} $ in an analogous manner to $\rho$ and  $\rho_{\entt}$, where now the hitting times are defined w.r.t.~the discrete-time chain. Denote the eigenvalues of $P$ by $1= \la_1 > \la_2 \ge \cdots \ge \la_{|\Omega|} \ge -1 $.  Define $\rel^{\mathrm{absolute}}:=\max \{| \log |\lambda_2||^{-1},|\log|\lambda_{|\Omega |}||^{-1} \}$. Let $\beta(A):=1-\la(A)$ and  \[\kappa_{\mathrm{discrete}} := \max_{A \in \con_{1/2}}\log_{\frac{1}{\beta(A)}}\left(\frac{1}{\pi(A)}\right) \le \kappa . \]
\begin{theorem}
\label{thm: aveL2}
There exist positive absolute constants $c,C_{1},C_2,C_3$ such that for every irreducible reversible Markov chain on a finite state space
\begin{equation}
\label{eq: aveL2}
 \quad \rho_{{\mathrm{discete}}} \le \tau_{2}^{\av} \le \rho_{{\mathrm{discete}}} + C_1\kappa_{\mathrm{discrete}}\le C_{2} \rho_{{\mathrm{discete}}}.
\end{equation}
\begin{equation}
\label{eq: avent}
 \rho_{\entt}^{{\mathrm{discete}}} \le \tau_{\entt}^{\av}  \le C_{3} \rho_{\entt}^{{\mathrm{discete}}}.
\end{equation}
\begin{equation}
\label{eq: discL2}
 \max ( \rho_{{\mathrm{discete}}},\rel^{\mathrm{absolute}}\log 2) \le \tau_{2}^{\mathrm{discete}}\le \rho_{{\mathrm{discete}}} + C_1(\kappa_{\mathrm{discrete}}+\rel^{\mathrm{absolute}})  \le C_{2} (\rho_{{\mathrm{discete}}}+\rel^{\mathrm{absolute}}).
\end{equation}
\begin{equation}
\label{eq: discent}
\max( \rho_{\entt}^{{\mathrm{discete}}},c\rel^{\mathrm{absolute}})\le \tau_{\entt}^{\mathrm{discrete}}  \le C_{3} (\rho_{\entt}^{{\mathrm{discete}}}+\rel^{\mathrm{absolute}}).
\end{equation}
Moreover, there exists an absolute constant $M \ge 1/2 $ such that
\begin{equation}
\label{eq:disc4}
\tau_{2}^{\mathrm{discete}}(M) \le \rho_{{\mathrm{discete}}} + C_1\kappa_{\mathrm{discrete}}.
\end{equation}

\end{theorem}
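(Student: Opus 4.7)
The plan is to derive Theorem~\ref{thm: aveL2} as the discrete-time counterpart of Theorem~\ref{thm: mainintro}, by adapting the machinery already developed for the continuous-time case. The key structural observation is that the averaged operator $A_t=P^{t-1}(I+P)/2$ factors through $(I+P)/2$, which is a \emph{positive} self-adjoint contraction on $L_2(\pi)$ when $P$ is reversible; this positivity is what makes $A_t$ behave like the continuous-time heat semigroup $H_t=e^{t(P-I)}$ and should let the hitting-time estimates transfer almost verbatim. The bare discrete-time chain will then require an additional $\rel^{\mathrm{absolute}}$ correction to control the possibly near-$(-1)$ eigenvalues of $P$.

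For the lower bounds I would argue exactly as in \S~\ref{s:ideaslower}. For any $A\in\con_{1/2}$ and any $x$, conditionally on $\{T_{A^c}>t\}$ the (discrete or averaged) chain lies in $A$, so its conditional distribution has $L_2$ and relative-entropy distance from $\pi$ at least of the orders $1/\sqrt{\pi(A)}$ and $|\log\pi(A)|$; requiring the total $L_2$-distance (resp.\ entropy) to drop below the mixing threshold then forces the non-escape probability to satisfy exactly the bounds defining $\rho_{\mathrm{discrete}}$ (resp.\ $\rho_{\entt}^{\mathrm{discrete}}$). The extra lower bound $\rel^{\mathrm{absolute}}\log 2\le\tau_2^{\mathrm{discrete}}$ in \eqref{eq: discL2}, and the analogous one in \eqref{eq: discent}, come from testing the $L_2$ (resp.\ entropy) distance on an eigenfunction of $P$ whose eigenvalue attains $|\lambda|=e^{-1/\rel^{\mathrm{absolute}}}$; this argument has no averaged-chain analog precisely because the factor $(I+P)/2$ kills eigenvalues near $-1$.

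For the upper bounds on $\tau_2^{\av}$ and $\tau_{\entt}^{\av}$ I would mirror the upper-bound argument for Theorem~\ref{thm: mainintro}. After $\rho_{\mathrm{discrete}}$ steps, the survival probability $\Pr_x[T_{A^c}>t]$ is small for every $A\in\con_{1/2}$; iterating a further $\kappa_{\mathrm{discrete}}$ steps contracts the residual mass on any such $A$ by a factor of $\pi(A)$, by the very definition of $\kappa_{\mathrm{discrete}}$ as $\max_A\log_{1/\beta(A)}(1/\pi(A))$. A dyadic decomposition over the values of $\pi(A)$, combined with a maximal inequality relating $\|a_t(x,\cdot)-1\|_2$ and $\|a_t(x,\cdot)-1\|_{\entt}$ to survival probabilities, then gives \eqref{eq: aveL2} and \eqref{eq: avent}. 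For the bare discrete-time chain \eqref{eq: discL2}--\eqref{eq: discent}, I would use the identity $k_t-a_t=\tfrac{1}{2}(k_t-k_{t-1})$ and the spectral decomposition of $P$ to show that the $L_2$ and entropy distances between $k_t$ and $a_t$ decay at rate $1/\rel^{\mathrm{absolute}}$, so prepending a delay of order $\rel^{\mathrm{absolute}}$ reduces the discrete-time problem to the averaged case.

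The main obstacle, and the reason \eqref{eq:disc4} is stated separately, is showing that with a larger fixed tolerance $M$ one can avoid the $\rel^{\mathrm{absolute}}$ term entirely. The idea is to split $k_t-1$ spectrally into its positive-eigenvalue part $k_t^+-1$ and its negative-eigenvalue part $k_t^-$; the first is dominated (in $L_2$) by the analogous object for the averaged chain, hence is already small after $\rho_{\mathrm{discrete}}+C_1\kappa_{\mathrm{discrete}}$ steps, while $\|k_t^-\|_2$ is uniformly bounded by an absolute constant because each negative eigenvalue contributes at most $|\lambda_j|^t\le 1$ to the spectral sum. Choosing the absolute constant $M$ larger than this universal bound absorbs the negative-spectrum contribution completely. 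The delicate step is carrying out the spectral split while the hitting-time estimates are in an inherently probabilistic form, and doing so without degrading the $\kappa_{\mathrm{discrete}}$ dependence obtained for the averaged chain; this is where I expect the bulk of the technical work to lie.
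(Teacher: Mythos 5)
Your high-level plan — run the hitting-time machinery of Theorem~\ref{thm: mainintro} in discrete time and account separately for the negative spectrum — is the right one, and your observation that $(I+P)/2$ is a positive self-adjoint contraction (so that $A_t$ enjoys a continuous-time-like Poincar\'e inequality) is exactly the content of the paper's Lemma~\ref{lem: discpoincare}. The lower bounds are handled as you describe. The issue is in your treatment of \eqref{eq:disc4}, which you correctly identify as the crux.

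Your proposed spectral split of $k_t-1$ into $k_t^+-1$ and $k_t^-$ contains a genuine error. The contribution of a negative eigenvalue $\lambda_j$ to $\|k_t(x,\cdot)-1\|_{2,\pi}^2$ is $\lambda_j^{2t} f_j^2(x)$, not $|\lambda_j|^t$, and the eigenfunction weight $f_j^2(x)$ is not bounded: indeed $\sum_j f_j^2(x)=1/\pi(x)$, so $\|k_t^-(x,\cdot)\|_{2,\pi}^2$ can be of order $1/\pi(x)$ at small $t$, and the decay of $\lambda_j^{2t}$ toward $0$ is precisely at rate $1/\rel^{\mathrm{absolute}}$ — the term \eqref{eq:disc4} is designed to avoid. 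There is no absolute constant $M$ absorbing this. (The paper's Proposition~\ref{prop:avcts} does bound the negative-spectrum contribution, but only by the positive-spectrum contribution one step earlier — i.e.\ by a continuous-time $L_2$ distance, which is \emph{not} controlled by $\rho_{\mathrm{discrete}}+\kappa_{\mathrm{discrete}}$ alone; cf.\ the $n$-clique remark after Corollary~\ref{cor:L2equiv}.)

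The paper sidesteps the split entirely. It proves \eqref{eq:disc4} by re-running the proof of Theorem~\ref{thm: main} verbatim with $k_t$ in place of $h_t$: the sets $A_s^{\mathrm d}=\{y:k_t(x,y)\ge s+1\}$ satisfy $(s+1)\pi(A_s^{\mathrm d})\le P^t(x,A_s^{\mathrm d})$ regardless of eigenvalue signs (escape probabilities are non-negative), the discrete-time Starr inequality handles the negative spectrum via the even/odd parity trick in \eqref{eq: Starrdisc} (costing only a factor of 2 in Lemma~\ref{lem: surprise}), and the escape estimate through the quasi-stationary distribution uses only $I-Q_A$, which never sees eigenvalues near $-1$. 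This yields $\tau_{2,x}^{\mathrm{discrete}}(e^9/2)\le\bar\rho_x^{\mathrm{discrete}}\le\rho_x^{\mathrm{discrete}}+C_4\kappa_{\mathrm{discrete}}$ directly. The $\rel^{\mathrm{absolute}}$ correction in \eqref{eq: discL2} then comes purely from the final boost $\tau_2^{\mathrm{discrete}}\le\tau_2^{\mathrm{discrete}}(M/2)+\lceil\rel^{\mathrm{absolute}}\log M\rceil$, while for the averaged chain the analogous boost pays only $\max(\rel\log M,M)$ thanks to \eqref{eq: avepoincare}, because the factor $\frac{1}{2}(1+\lambda_i)$ already annihilates the near-$(-1)$ spectrum. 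So the direction of reduction is the reverse of what you propose: rather than transferring the averaged result to the discrete chain by a delay, the paper first proves the constant-tolerance discrete bound and then boosts it to each of the two targets with the appropriate Poincar\'e-type inequality.
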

In conjunction with Theorem \ref{thm: mainintro} and Lemma \ref{lem: rhodiscrhocts}, which asserts that $ \rho_{{\mathrm{discete}}} \le \bar C \rho  $ and $ \rho_{\entt}^{{\mathrm{discete}}} \le \bar C' \rho_{\entt} $, we get the following corollary.
\begin{corollary}
\label{cor:L2equiv}
There exists an absolute constant $C>0$ such that for every irreducible reversible Markov chain on a finite state space
\[ \tau_{2}^{\av}  \le C \tau_2 \quad \text{and} \quad  \tau_{\entt}^{\av} \le C\tau_{\entt}. \]
\end{corollary}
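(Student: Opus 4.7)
The plan is to derive this corollary as a direct concatenation of three results already established (or stated) in the paper, with no new analytic input required. From Theorem~\ref{thm: aveL2}, the middle inequalities in \eqref{eq: aveL2} and \eqref{eq: avent} give $\tau_{2}^{\av} \le C_2 \rho_{{\mathrm{discete}}}$ and $\tau_{\entt}^{\av} \le C_3 \rho_{\entt}^{{\mathrm{discete}}}$. Lemma~\ref{lem: rhodiscrhocts}, invoked in the paragraph introducing the corollary, converts the discrete-time hitting-time quantities into continuous-time ones: $\rho_{{\mathrm{discete}}} \le \bar C \rho$ and $\rho_{\entt}^{{\mathrm{discete}}} \le \bar C' \rho_{\entt}$. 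Finally, the leftmost inequalities in \eqref{eq: main3intro} and \eqref{eq: main3'intro} from Theorem~\ref{thm: mainintro} provide $\rho \le \tau_2$ and $\rho_{\entt} \le \tau_{\entt}$.

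Chaining these three ingredients, I would write
\begin{equation*}
\tau_{2}^{\av} \;\le\; C_2\, \rho_{{\mathrm{discete}}} \;\le\; C_2 \bar C\, \rho \;\le\; C_2 \bar C\, \tau_2,
\end{equation*}
and symmetrically for the relative-entropy version,
\begin{equation*}
\tau_{\entt}^{\av} \;\le\; C_3\, \rho_{\entt}^{{\mathrm{discete}}} \;\le\; C_3 \bar C'\, \rho_{\entt} \;\le\; C_3 \bar C'\, \tau_{\entt}.
\end{equation*}
Setting $C := \max(C_2 \bar C,\, C_3 \bar C')$ yields the claimed inequalities simultaneously.

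At the level of the corollary, there is essentially no obstacle, since all of the real work is hidden inside the three black boxes above. If one were to unpack where the difficulty actually lives, it is in Lemma~\ref{lem: rhodiscrhocts}: the passage from a discrete-time hitting-time tail to a continuous-time one is carried out by coupling the continuous-time chain with its discrete-time skeleton through a $\mathrm{Poisson}(t)$ clock and absorbing the random time change by standard Poisson concentration. The only delicate point in that comparison is that the logarithmic reweighting appearing in $\rho_{\entt}$ and $\rho_{\entt}^{{\mathrm{discete}}}$ (namely the requirement $\Pr_x[T_{A^c}>t]\le C_{\entt}/|\log\pi(A)|$) is preserved up to absolute constants after the coupling. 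Once that lemma is in hand, the corollary itself is a one-line calculation.
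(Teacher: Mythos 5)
Your proposal is correct and is essentially identical to the paper's own derivation: the text immediately preceding the corollary explicitly says it follows by combining Theorem~\ref{thm: aveL2} with Lemma~\ref{lem: rhodiscrhocts} and the lower bounds $\rho \le \tau_2$, $\rho_{\entt}\le\tau_{\entt}$ from Theorem~\ref{thm: mainintro}. The chaining you wrote out is exactly this.
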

\begin{remark}
To see that the reverse inequalities are false consider simple random walk (SRW) on the $n$-clique, for which $\tau_{2}^{\av} \le 2 $ while $\tau_2 = \Theta(\log n) $ and  $\tau_{\entt} = \Theta(\log \log n) $. Indeed, it is possible that $\rho^{\mathrm{discrete}} \ll \rho$. Loosely speaking, this type of behavior is possible when $\max (|\lambda_{|\Omega|}|, |\la_2|) \ll 1 $ (e.g.~for SRW on the $n$-clique $\la_2=\la_n=-\frac{1}{n-1} $). More generally, consider an arbitrary distribution $\pi$ and the transition matrix $\Pi $ whose rows are all equal to $\pi$. Then $\tau_{\infty}^{\mathrm{discrete}}=1$ while $\tau_{\infty} \approx | \log (2\min_{x}\pi(x))| $. Note that $\la_i=1_{i=1}$. 
\end{remark}
\begin{remark}
The fact that under reversibility $\tau_{1}^{\av} \le C \tau_1 $ is due to Peres and Sousi  \cite{peres2015mixing}. In fact,  in \cite{AF} the authors confirmed a conjecture by  Aldous and Fill \cite[Open Problem 4.17]{aldous} by showing that under reversibility, for all $t,M \ge e$ and $x \in \Omega$
\begin{equation*}
\label{eq:AFeasy}
\|H_{t+M \sqrt{t}}(x,\cdot) -\pi(\cdot)\|_{\mathrm{TV}}-e^{-cM^2} \le  \|A_{t }(x,\cdot) -\pi (\cdot) \|_{\mathrm{TV}} \le  \|H_{t-( M \log M) \sqrt{t}}(x,\cdot) -\pi(\cdot)\|_{\mathrm{TV}}  + C/M,
\end{equation*}
where $\|\mu - \nu \|_{\TV}=\frac{1}{2}\|\mu - \nu \|_{1,\pi}=\frac{1}{2}\sum_x |\mu(x)-\nu(x)|$ and $H_s:=e^{-s(I-P)}$.  
\end{remark}
The following proposition refines the inequality $\tau_2^{\mathrm{ave}} \le C \tau_2 $. The argument is borrowed from \cite[Corollary 2.2]{diaconis}.

\begin{proposition}
\label{prop:avcts}
For every finite irreducible reversible Markov chain, for every $k \ge 2$
\[\forall k' \ge 1, \quad \|A_{k+k'}(x,\cdot)- \pi(\cdot) \|_{2,\pi}^2 \le \left(\frac{1}{2ek'} \right)^2 (\|\Pr_x^{k-2} - \pi \|_{2,\pi}^2+1)+(1- \la)^{2k'+2} \|\Pr_x^{k-2}-\pi \|_{2,\pi}^2. \]
\end{proposition}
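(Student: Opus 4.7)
The plan is to argue spectrally. Since $P$ is reversible, let $1 = \la_1 > \la_2 \ge \cdots \ge \la_{|\Omega|} \ge -1$ be its eigenvalues and let $\phi_1 \equiv 1, \phi_2, \ldots$ be an $L^2(\pi)$-orthonormal basis of eigenfunctions. Each $\phi_i$ is then an eigenfunction of $A_t = (P^t+P^{t-1})/2$ with $A_t$-eigenvalue $\la_i^{t-1}(1+\la_i)/2$, so a Parseval expansion of the density $a_{k+k'}(x,\cdot) - 1 \in L^2(\pi)$ yields
\begin{equation*}
\|A_{k+k'}(x,\cdot) - \pi(\cdot)\|_{2,\pi}^2 \;=\; \sum_{i \ge 2} \la_i^{2(k+k'-1)} \left(\frac{1+\la_i}{2}\right)^2 \phi_i(x)^2 .
\end{equation*}
I would then factor $\la_i^{2(k-2)}$ out of each summand, recognizing that $\sum_{i \ge 2} \la_i^{2(k-2)}\phi_i(x)^2 = S := \|\Pr_x^{k-2}-\pi\|_{2,\pi}^2$, and estimate the remaining ``excess'' factor $\la_i^{2(k'+1)}((1+\la_i)/2)^2$ uniformly over $i \ge 2$, splitting on the sign of $\la_i$.

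For $\la_i \in [0,1-\la]$ the excess factor is at most $(1-\la)^{2k'+2} \cdot 1$, and summing over this regime contributes at most $(1-\la)^{2k'+2} S$ — this produces the second term of the claimed bound. For $\la_i < 0$, writing $\mu := -\la_i \in (0,1]$, the excess factor becomes $[\mu^{k'+1}(1-\mu)/2]^2$, and the key step is the elementary one-variable estimate $\max_{\mu \in [0,1]} \mu^m(1-\mu) = m^m/(m+1)^{m+1} \le 1/(em)$ for $m \ge 1$ (equivalent to the standard inequality $(1+1/m)^{m+1} \ge e$). Applied with $m = k'+1$ this gives $\mu^{k'+1}(1-\mu)/2 \le 1/(2e(k'+1)) \le 1/(2ek')$, so after squaring the excess factor is at most $(1/(2ek'))^2$ and the corresponding contributions sum to at most $(1/(2ek'))^2 S$.

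Assembling the two regimes bounds the left-hand side by $(1/(2ek'))^2 S + (1-\la)^{2k'+2} S$, and the harmless replacement $S \le S+1$ in the first summand produces the stated inequality exactly. The main (in fact only) obstacle is locating the sharp constant in the lemma $\mu^m(1-\mu) \le 1/(em)$: it is precisely this inequality that captures the cancellation generated by the single-step average on the near-antiperiodic modes $\la_i \approx -1$, and produces the $1/k'$-type decay characteristic of the averaged chain. Retaining $S+1$ rather than $S$ is cosmetic but convenient, since $S+1 = P^{2k-4}(x,x)/\pi(x)$ is the natural return-probability quantity used when comparing the discrete-time, averaged and continuous-time semigroups elsewhere in the paper; the hypothesis $k \ge 2$ is used only to guarantee that $S$ is a genuine squared $L^2(\pi)$-norm.
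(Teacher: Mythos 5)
There is a real gap, and it is exactly at the point you flagged as harmless. The quantity $\|\Pr_x^{k-2}-\pi\|_{2,\pi}^2$ in the statement is the \emph{continuous-time} $L^2$ distance: throughout the paper $\Pr_x^t$ denotes the law of the continuous-time chain $H_t=e^{-t(I-P)}$, and by \eqref{eq: generalLp} one has $\|\Pr_x^{k-2}-\pi\|_{2,\pi}^2 = h_{2k-4}(x,x)-1 = \sum_{i\ge 2} e^{-2(k-2)(1-\la_i)}\phi_i(x)^2$. Your identification $S=\sum_{i\ge 2}\la_i^{2(k-2)}\phi_i(x)^2$ is therefore the \emph{discrete}-time distance $\|P^{k-2}(x,\cdot)-\pi\|_{2,\pi}^2$, which is a different (and in general much larger) quantity. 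The mismatch is fatal precisely in the regime your argument is meant to control: if $\la_i$ is close to $-1$ then $|\la_i|^{2(k-2)}\approx 1$ while $e^{-2(k-2)(1-\la_i)}\approx e^{-4(k-2)}$, so the inequality $\sum_{\la_i<0}\la_i^{2(k-2)}\phi_i(x)^2\le S$ that you need for the first summand is simply false. (For the positive eigenvalues there is no such problem, since $\la_i\le e^{-(1-\la_i)}$ gives $\la_i^{2(k-2)}\le e^{-2(k-2)(1-\la_i)}$ termwise, which is what produces the second summand.)

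The step you are missing is the device that converts the negative-spectrum mass into a continuous-time quantity. The paper uses the nonnegativity of the odd-time return probability: $0\le P^{2\ell+1}(x,x)/\pi(x)=\sum_i\la_i^{2\ell+1}\phi_i(x)^2$ forces $\sum_{\la_i<0}|\la_i|^{2\ell+1}\phi_i(x)^2\le\sum_{\la_i>0}\la_i^{2\ell+1}\phi_i(x)^2$, hence (with $r_-(m):=\sum_{\la_i<0}\la_i^m\phi_i(x)^2$)
\begin{equation*}
r_-(2\ell+2)\le\sum_{\la_i>0}\la_i^{2\ell}\phi_i(x)^2\le\sum_{i=1}^{|\Omega|}e^{-2\ell(1-\la_i)}\phi_i(x)^2=h_{2\ell}(x,x)=\|\Pr_x^\ell-\pi\|_{2,\pi}^2+1.
\end{equation*}
Applied with $\ell=k-2$ this gives $r_-(2k-2)\le S+1$, and the paper accordingly factors $\la_i^{2(k-1)}$ (not $\la_i^{2(k-2)}$) out of the negative-eigenvalue terms, using the one-variable estimate with $m=k'$ rather than $m=k'+1$. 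Note also that the ``$+1$'' in the first summand is not cosmetic convenience: it is the $i=1$ contribution that unavoidably enters through the return-probability inequality above. Your elementary lemma $\mu^m(1-\mu)\le 1/(em)$ and the split by the sign of $\la_i$ are both correct and are the same as the paper's; what is missing is the bridge from $r_-$ to the continuous-time $L^2$ distance.
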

\section{Propaedeutics}
\label{s:pre}
\subsection{Different notions of distance and mixing times and their relations}
\label{s:def}
Generically, we shall denote the state space of a Markov chain  $(X_t)$  by $\Omega
$ and its stationary distribution by $\pi$. We denote such a chain by $(\Omega,P,\pi)$. We say that the chain is finite, whenever $\Omega$ is finite.
The continuous-time version of a chain
is a continuous-time Markov chain whose distribution at time $t$ is given by the heat kernel $H_t:=e^{-t(I-P)}$. We denote $h_{t}(x,y):=H_{t}(x,y)/\pi(y)$.   

\medskip

We denote by $\Pr_{x}^t$ (resp.~$\Pr_{x}$) the distribution of $X_t$ (resp.~$(X_t)_{t \ge 0 }$), given that the initial state is $x$. The $L_p$ norm of a function $f \in \R^{\Omega}$ is $\|f\|_p:=(\mathbb{E}_{\pi}[|f|^{p}])^{1/p}$ for $1 \le p < \infty$ (where $\mathbb{E}_{\pi}[h]:= \sum_x \pi(x) h(x)$) and $\|f\|_{\infty}:=\max_x |f(x)|$.
The $L_p$ norm of a signed measure $\sigma$ is
\begin{equation*}
\label{eq: Lpdef}
\|\sigma \|_{p,\pi}:=\|\sigma / \pi \|_p, \quad \text{where} \quad (\sigma / \pi)(x)=\sigma(x) / \pi(x).
\end{equation*}
We denote the worst case $L_p$ distance at time $t$   by $d_{p}(t):=\max_x d_{p,x}(t)$, where $d_{p,x}(t):= \|\Pr_x^t-\pi \|_{p,\pi}$. Under reversibility for all $x \in \Omega$ and $t \ge 0$ (e.g.~(2.2) in \cite{spectral})
\begin{equation}
\label{eq: generalLp}
d_{2,x}^2(t)= h_{2t}(x,x)-1, \quad d_{\infty}(t)=\max_{y} h_{t}(y,y)-1.
\end{equation}
The $\epsilon$-$L_{p}$\textbf{-mixing-time} of the chain (resp.~for a fixed starting state $x$) is defined as
\begin{equation}
\label{eq: taupeps}
\tau_{p}(\epsilon):= \max_{x} \tau_{p,x}(\epsilon), \quad \text{ where } \tau_{p,x}(\epsilon):= \min \{t:  d_{p,x}(t)\le \epsilon \}.
\end{equation}
When $\epsilon=1/2$ we omit it from the above
notation. Let  $m_p:=1+ \lceil (2-p)/(2(p-1)) \rceil$.
 It follows from \eqref{eq: generalLp},  Jensen's inequality and the Reisz-Thorin interpolation Theorem that for reversible chains, the $L_p$ mixing times can be compared as follows (e.g.~\cite[Lemma 2.4.6]{saloff}):
\begin{equation}
\label{eq: L2Linfty}
\begin{split}
  \tau_2(a) \le & \tau_{p}(a) \le 2 \tau_{2}(\sqrt{a})=\tau_{\infty}( a) \quad \text{ for all } p\in (2,\infty] \text{ and }a>0,
\\  \frac{1}{m_p} \tau_2(&a^{m_p}) \le  \tau_p(a)\le   \tau_2(a)  \quad \quad \, \, \text{ for all } p\in (1,2) \text{ and }a>0,
\end{split}
\end{equation}
Hence for all $1<p \le \infty$ the $L_{p}$ convergence profile is determined by that of $L_2$.

\medskip

\medskip

The \textbf{\emph{relative entropy}} of a distribution $\mu$ w.r.t.~$\pi$ is defined as
\begin{equation}
\label{eq: REdist}
D( \mu || \pi ):=\sum_x \mu(x) \log (\mu(x)/\pi(x))=\mathrm{Ent}_{\pi}(\mu / \pi), \quad \text{where for }f \in \R_{+}^\Omega
\end{equation}
\[\mathrm{Ent}_{\pi}(f):=\mathbb{E}_{\pi}[f\log f]-\mathbb{E}_{\pi}[f] \log \mathbb{E}_{\pi}[f]=\mathbb{E}_{\pi}[f\log (f/\mathbb{E}_{\pi}[f])]. \]
The mixing time in relative entropy is defined as
\begin{equation}
\label{eq: tauent}
\tau_{\entt,x}:=\inf \{t: D( \Pr_x^t ||\pi) \le 1/2 \} \quad \text{and} \quad \tau_{\entt}=\max_x \tau_{\entt,x}.
\end{equation}
The relative entropy distance can be compared with the $L_1$ and $L_2$ distances as follows: \cite[p.~110-112]{mathai}
 \begin{equation}
 \label{eq:entL1}
 2D( \mu || \pi ) \ge  \|\mu -\pi\|_{1,\pi}^2+(M^{-1}\|\mu -\pi\|_{1,\pi})^M \quad \text{for some constant} \quad M \ge 3,
 \end{equation}
   and (\cite[Theorem 5]{su})
\begin{equation}
\label{eq: su}
D( \mu || \pi ) \le \log (1+\|\mu-\pi\|_{2,\pi}^2).
\end{equation}
\subsection{Background on the spectral-gap and the Log-Sobolev constant}

The following fact (often referred to as the Poincar\'e inequality) is standard. It can be proved by elementary linear-algebra using the spectral decomposition (e.g.~\cite[Lemma 3.26]{aldous}).
\begin{fact}
\label{lem: contraction}
Let $(\Omega,P,\pi)$ be a finite   irreducible  Markov chain.
Let
$x \in \Omega  $  and $s,t \ge 0$. Then
\begin{equation}
\label{eq: L2contraction}
\|\Pr_x^{t+s}-\pi \|_{2,\pi} \le   e^{-s/\rel}
\|\Pr_x^{t}-\pi \|_{2,\pi}.
\end{equation}
 In particular, for all $x \in \Omega$ and $M \ge 1$,  $$\tau_{2,x} \le \tau_{2,x}(M/2)+\rel \log M. $$
\end{fact}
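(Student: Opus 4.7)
The plan is to reduce both statements to a standard fact about the semigroup $H_s = e^{-s(I-P)}$ acting on mean-zero functions in $L_2(\pi)$, and then to derive the ``in particular'' as a direct consequence of the displayed contraction.

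Let $\mu_t:=\Pr_x^t$ and $f_t := \mu_t/\pi - 1$, so that $\|\Pr_x^t - \pi\|_{2,\pi}^2 = \|f_t\|_2^2$ and $\mathbb{E}_\pi[f_t] = 0$ for every $t \ge 0$. A short computation with the adjoint gives $f_{t+s} = H_s^{*} f_t$, where $H_s^{*}:=e^{-s(I-P^*)}$ is the $L_2(\pi)$-adjoint of $H_s$. Thus the desired inequality \eqref{eq: L2contraction} is the same as
\[
\|H_s^{*}g\|_2 \le e^{-s/\rel}\|g\|_2 \qquad \text{for every }g \in L_2(\pi)\text{ with }\mathbb{E}_\pi[g]=0,
\]
applied to $g = f_t$.

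First I would prove this operator inequality. In the reversible case $P=P^*=Q$, the fact is an immediate consequence of the spectral decomposition of $I-P$ in $L_2(\pi)$: expanding $g = \sum_{i\ge 2} c_i \phi_i$ in an orthonormal eigenbasis with eigenvalues $1-\lambda_i$ (the mean-zero condition kills the constant mode), one gets
\[
\|H_s^{*} g\|_2^2 \;=\; \sum_{i\ge 2} c_i^{\,2}\, e^{-2s(1-\lambda_i)} \;\le\; e^{-2s\lambda}\|g\|_2^2.
\]
For the general (possibly non-reversible) case, I would instead differentiate: writing $u_s := H_s^{*} g$,
\[
\tfrac{d}{ds}\|u_s\|_2^2 \;=\; -2\langle u_s,(I-P^*)u_s\rangle \;=\; -2\langle u_s,(I-Q)u_s\rangle,
\]
where the second equality uses $\langle u,(I-P^*)u\rangle = \langle (I-P)u,u\rangle = \langle u,(I-P)u\rangle$ and symmetrization. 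Now $I-Q$ is self-adjoint with smallest eigenvalue $0$ on constants and next eigenvalue $\lambda$ by definition of $\rel$; since $u_s$ remains mean-zero, Rayleigh's principle gives $\langle u_s,(I-Q)u_s\rangle \ge \lambda\|u_s\|_2^2$, and Grönwall's lemma yields $\|u_s\|_2^2 \le e^{-2s\lambda}\|g\|_2^2$.

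For the ``in particular'' statement, I would simply take $t = \tau_{2,x}(M/2)$ and $s = \rel\log M$ in \eqref{eq: L2contraction}, so that
\[
\|\Pr_x^{t+s} - \pi\|_{2,\pi} \;\le\; e^{-\log M}\cdot (M/2) \;=\; 1/2,
\]
which gives $\tau_{2,x} \le t+s$. No step here is a genuine obstacle; the only subtle point is the passage from the spectral decomposition in the reversible case (which the paper's remark suggests is the intended argument) to the non-reversible case, which is handled cleanly by the differentiation argument above and uses that $\rel$ is defined via the symmetric part $Q$.
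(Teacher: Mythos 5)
Your proof is correct and matches the approach the paper indicates (the paper cites the spectral decomposition argument from Aldous--Fill and leaves the proof as standard). Your additional observation that the non-reversible case requires passing to the symmetrized generator $I-Q$ via the Dirichlet-form/Gr\"onwall argument, rather than diagonalizing $P$ directly, is exactly the right technical point, and the reduction to the semigroup contraction $\|H_s^* g\|_2 \le e^{-s/\rel}\|g\|_2$ on mean-zero functions, together with the ``in particular'' deduction, is the standard and intended argument.
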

The relaxation-time provides a lower bound on $L_1$ mixing time as follows. Let $\beta \neq 1$ be an eigenvalue of $P$. Then under reversibility (\cite[Theorem 12.4 and Lemma 20.11]{levin})  
\[\|P^k(x,\cdot)-\pi(\cdot) \|_{1,\pi} \ge  |\beta|^{k} \quad \text{and} \quad \|\Pr_x^{t}-\pi \|_{1,\pi} \ge e^{-(1-\beta)t}. \]
Consequently, under reversibility, for all $\delta \in (0,1] $ we have that
\begin{equation}
\label{eq: lowerL1rel}
\tau_1^{\mathrm{discrete}}(\delta) \ge \rel^{\mathrm{absolute}} \log (1/\delta) \quad \text{and} \quad \tau_1(\delta) \ge \rel \log (1/\delta). 
\end{equation}
It follows from \eqref{eq:entL1} that there exists some absolute constant $c>0$ such that
\begin{equation}
\label{eq: lowerRErel}
\tau_{\entt}^{\mathrm{discrete}} \ge c \rel^{\mathrm{absolute}}. 
\end{equation}
With the convention $0 \log 0=0$, for all non-zero $f,g \in \R_{+}^{\Omega}$ we
\label{s: spectral}
define $\langle f,g \rangle_{\pi}:=\mathbb{E}_{\pi}[fg]$,
  $\mathcal{E}(f,g):=\langle (I-Q) f,g \rangle_{\pi} $ and $\mathcal{E}(f):=\mathcal{E}(f,f) $. The \emph{Log-Sobolev constant}  of the chain is
\begin{equation}
\label{eq: deftl}
c_{\mathrm{LS}}:= \inf \{\mathcal{E}(f)/\ent(f^{2}) : f \text{ is non-constant}  \}.
 \end{equation}
Recall that $t_{\mathrm{LS}}:=1/c_{\mathrm{LS}} $.
It is always the case that $t_{\mathrm{LS}}\ge 2\rel $ (e.g.~\cite[Lemma 3.1]{diaconis}).

\medskip

There are numerous works aiming towards general geometric upper  bounds on $\tau_{\infty}$. Among the most advanced techniques are the  spectral profile \cite{spectral} and Logarithmic Sobolev inequalities (see \cite{diaconis} for a survey on the topic). Let  $\pi_*:=\min_{x \in \Omega}\pi(x)$.  It is classical (e.g.~\cite[Corollary 3.11]{diaconis}) that for reversible chains \begin{equation}
\label{eq: classic}
t_{\mathrm{LS}}/2 \le \tau_2(1/e) \le t_{\mathrm{LS}}(1+\frac{1}{4}\log \log (1/\pi_*)).  \end{equation}
There are examples demonstrating that each of these bounds can be attained up to a constant factor.

Let $1 \le p_1,p_2 \le \infty$. The $p_{1} \to p_2$ norms of a linear operator $\mathbf{A}$ are given by
$$\|\mathbf{A}\|_{p_{1} \to p_{2}}:=\max \{\|\mathbf{A}f\|_{p_{2}}:\|f\|_{p_1}=1 \}.  $$
If $\|\mathbf{A}\|_{p_{1} \to p_{2}} \le 1 $ for some $1 \le p_1 < p_2 \le \infty$ we say that $\mathbf{A} $ is a hypercontraction. For all  $ p_1,p_2 $, $\|H_t\|_{p_1 \to p_2}$ is non-increasing in $t$. It is a classic result (e.g.~\cite[Theorem 3.5]{diaconis} and \cite[Theorem 8.24]{aldous}) that the Log-Sobolev time can be characterized in terms of hypercontrativity.
\begin{fact}
\label{fact: hyper}
Let $(\Omega,P,\pi)$ be a finite reversible chain. Let $s_q:=\inf \{t:\|H_t\|_{2 \to q} \le 1 \}$. Then $t_{\mathrm{LS}}=4\sup_{q:2<q<\infty}s_q/\log (q-1)$.
\end{fact}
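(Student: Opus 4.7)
\medskip

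\noindent\textbf{Proof proposal for Fact \ref{fact: hyper}.} The plan is to prove both inequalities $t_{\mathrm{LS}} \ge 4 s_q / \log(q-1)$ and $t_{\mathrm{LS}} \le 4 \sup_q s_q / \log(q-1)$ via the Gross differentiation trick. For a fixed $f \in \R_{+}^{\Omega} \setminus \{0\}$ and a smooth increasing function $q : [0,\infty) \to (2,\infty)$ with $q(0) = 2$, set $g_t := H_t f$ and $F(t) := \|g_t\|_{q(t)}$. Using $\partial_t H_t = -(I-P)H_t$ (and reversibility, so $Q = P$), a direct computation of $q(t) F(t)^{q(t)-1} F'(t) + q'(t) F(t)^{q(t)} \log F(t)$ yields
\begin{equation*}
\frac{F'(t)}{F(t)} \;=\; \frac{1}{q(t)^2 F(t)^{q(t)}}\Bigl( q'(t)\,\mathrm{Ent}_{\pi}(g_t^{q(t)}) - q(t)^2\,\mathcal{E}(g_t^{q(t)-1}, g_t)\Bigr).
\end{equation*}

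The core step is the elementary two-point inequality
\begin{equation*}
(a^{q-1}-b^{q-1})(a-b) \;\ge\; \tfrac{4(q-1)}{q^2}\,(a^{q/2}-b^{q/2})^2 \qquad (a,b\ge 0,\ q\ge 2),
\end{equation*}
which, summed against $\pi(x)P(x,y)$, gives $\mathcal{E}(g^{q-1},g) \ge \frac{4(q-1)}{q^2}\mathcal{E}(g^{q/2})$. Combined with the log-Sobolev inequality $\mathcal{E}(g^{q/2}) \ge c_{\mathrm{LS}}\,\mathrm{Ent}_{\pi}(g^q)$, one obtains $F'(t) \le 0$ precisely when $q'(t) \le 4 c_{\mathrm{LS}}\, q(t)(q(t)-1)$. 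Solving this ODE with equality and $q(0) = 2$ gives $q(t) - 1 = \exp(4 c_{\mathrm{LS}} t)$; then $F$ is non-increasing, and since $F(0) = \|f\|_2$, we conclude that $\|H_t\|_{2 \to q(t)} \le 1$. Equivalently, $s_{q(t)} \le t$, i.e.\ $s_q \le \frac{t_{\mathrm{LS}}}{4}\log(q-1)$ for every $q > 2$, yielding $t_{\mathrm{LS}} \ge 4 s_q / \log(q-1)$.

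For the converse direction, let $\tilde t := \sup_{q > 2} 4 s_q / \log(q-1)$ and suppose $\|H_t\|_{2 \to q} \le 1$ whenever $q - 1 \le e^{4t/\tilde t}$. Apply this to $q(t) := 1 + e^{4t/\tilde t}$, so that $\|H_t f\|_{q(t)} \le \|f\|_2$ for all $t \ge 0$, with equality at $t = 0$. Hence the right derivative of $F(t) - \|f\|_2$ at $t = 0$ is $\le 0$. Plugging $q(0) = 2$, $q'(0) = 4/\tilde t$ into the identity above and normalizing so that $\|f\|_2 = 1$ yields $\frac{4}{\tilde t}\mathrm{Ent}_{\pi}(f^2) \le 4 \mathcal{E}(f)$, i.e.\ $c_{\mathrm{LS}} \ge 1/\tilde t$, which gives $t_{\mathrm{LS}} \le \tilde t$.

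The main obstacle is justifying the two-point inequality and, more delicately, the differentiation at $t=0$ when $f$ is only assumed non-negative (so that $f^{q-1}$ may blow up where $f$ vanishes); the standard fix is to prove the LSI first for strictly positive $f$ bounded away from $0$ and use a density/approximation argument, replacing $f$ by $f + \epsilon$ and letting $\epsilon \downarrow 0$. One must also check that the infimum in the definition of $s_q$ is attained, which follows because $t \mapsto \|H_t\|_{2 \to q}$ is continuous and non-increasing with limit $1$.
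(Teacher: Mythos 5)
The paper does not supply its own proof of this Fact---it cites it to Diaconis--Saloff-Coste and Aldous--Fill---and your argument is precisely the standard Gross differentiation proof from those sources. The structure (differentiate $t\mapsto \|H_t f\|_{q(t)}$, apply the two-point inequality to bound $\mathcal{E}(g^{q-1},g)$ by $\mathcal{E}(g^{q/2})$, feed in the log-Sobolev inequality, solve the resulting ODE for $q(t)$, and read off the converse from the right derivative at $t=0$) is exactly right. There is, however, one algebraic slip you must fix: combining $\mathcal{E}(g^{q-1},g)\ge \frac{4(q-1)}{q^2}\mathcal{E}(g^{q/2})$ with $\mathcal{E}(g^{q/2})\ge c_{\mathrm{LS}}\,\mathrm{Ent}_{\pi}(g^q)$ gives $q^2\mathcal{E}(g^{q-1},g)\ge 4(q-1)\,c_{\mathrm{LS}}\,\mathrm{Ent}_{\pi}(g^q)$, so the condition for $F'(t)\le 0$ is $q'(t)\le 4c_{\mathrm{LS}}\bigl(q(t)-1\bigr)$, without the extra factor of $q(t)$ that you wrote. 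Your stated solution $q(t)-1=e^{4c_{\mathrm{LS}}t}$ solves this correct ODE (the ODE you actually wrote, $q'=4c_{\mathrm{LS}}q(q-1)$, would blow up at $t=\frac{\log 2}{4c_{\mathrm{LS}}}$ rather than giving that solution), so the forward direction and the converse derivation via $q(0)=2$, $q'(0)=4/\tilde t$ all go through once the exponent is corrected. Your closing remarks about approximating $f$ by $f+\epsilon$ and about continuity and monotonicity of $t\mapsto\|H_t\|_{2\to q}$ are the right way to handle the edge cases, though in this finite-state setting the paper's $0\log 0=0$ convention already makes the entropy term finite at $t=0$.
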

The following result (\cite[Theorem 3.10]{diaconis}) will allow us to bound $t_{\mathrm{LS}}$ from above.
\begin{fact}
\label{fact: hyper2}
Let $(\Omega,P,\pi)$ be a finite reversible  chain.  Fix $2<q<\infty$. Assume that $r_q$ and $M_q$ satisfy that $\|H_{r_{q}} \|_{2 \to q } \le M_q $. Then
\begin{equation}
\label{eq: hyper}
t_{\mathrm{LS}} \le \frac{2q}{q-2} r_{q}+2 \rel (1+\frac{q}{q-2} \log M_q ).
\end{equation}
\end{fact}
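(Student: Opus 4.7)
The strategy is to invoke Fact \ref{fact: hyper}, by which any honest hypercontractive bound $\|H_{t^*}\|_{2\to q^*}\le 1$ (for some $q^*>2$) translates into $t_{\mathrm{LS}} \le 4t^*/\log(q^*-1)$. Our task is to upgrade the weaker hypothesis $\|H_{r_q}\|_{2\to q}\le M_q$ into such an estimate, absorbing the overhead $M_q$ by investing additional time proportional to $\rel\log M_q$, plus an extra $O(\rel)$ correction arising from the constant component of a test function.

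The first observation I would make is that the overhead factor $M_q$ only acts on the fluctuation part of $f$, not on its mean, because $H_s$ preserves $\mathbb{E}_\pi$. Composing the hypothesis with the $L^2$-spectral-gap decay on mean-zero functions, for any mean-zero $g$ and $s\ge 0$ one has
\[
\|H_{r_q+s}\,g\|_q \;=\; \|H_{r_q}(H_sg)\|_q \;\leq\; M_q\,\|H_sg\|_2 \;\leq\; M_q\,e^{-s/\rel}\,\|g\|_2.
\]
Choosing $s = \rel\log M_q$ yields the \emph{restricted} (mean-zero) hypercontractive estimate $\|H_{t^*}g\|_q \le \|g\|_2$ for all mean-zero $g$, at time $t^* := r_q + \rel\log M_q$.

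To promote this restricted bound to a full LSI controlling $t_{\mathrm{LS}}$, I would combine Gross' differential argument with Rothaus' splitting lemma. Gross' approach, adapted to the restricted setting, produces a mean-zero LSI $\ent(g^2) \le \alpha\,\mathcal{E}(g,g)$ for mean-zero $g$, with $\alpha \le 4t^*/\log(q-1)$. Rothaus' lemma states that, for any $f$ with $\bar f := \mathbb{E}_\pi f$,
\[
\ent(f^2) \;\leq\; \ent\bigl((f-\bar f)^2\bigr) + 2\,\Var_\pi(f).
\]
Applying the mean-zero LSI to the first summand and the Poincar\'e inequality $\Var_\pi(f) \le \rel\,\mathcal{E}(f,f)$ to the second gives $\ent(f^2) \le (\alpha+2\rel)\,\mathcal{E}(f,f)$ for all $f$, and consequently
\[
t_{\mathrm{LS}} \;\leq\; \alpha + 2\rel \;\leq\; \frac{4(r_q + \rel\log M_q)}{\log(q-1)} + 2\rel.
\]

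The final cosmetic step is the elementary inequality $\tfrac{4}{\log(q-1)} \le \tfrac{2q}{q-2}$ for all $q>2$ (equivalent to $q\log(q-1)\ge 2(q-2)$, verified by noting both sides vanish at $q=2^+$, their first derivatives agree there, and the left side is convex in $q$ while the right side is linear), which converts the above into the claimed form $t_{\mathrm{LS}} \le \frac{2q}{q-2}\,r_q + 2\rel\bigl(1 + \frac{q}{q-2}\log M_q\bigr)$. The main obstacle is the promotion of the mean-zero hypercontractivity to a full LSI: Gross' argument typically relies on strict positivity of $H_tf$, which fails on the mean-zero subspace, and a direct triangle-inequality splitting of $f = \bar f + g$ only delivers $\|H_{t^*}\|_{2\to q}\le \sqrt 2$ rather than $\le 1$. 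Rothaus' splitting (and the accompanying additive $+2\rel$ correction --- which matches the $2\rel$ in the statement) is precisely the tool that circumvents this.
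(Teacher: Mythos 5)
This statement is quoted from \cite[Theorem 3.10]{diaconis}, so the paper does not supply its own proof; I will assess your argument on its merits. Your Step 1 (combining the hypothesis with the Poincar\'e decay on mean-zero functions to reach $\|H_{t^*}g\|_q\le\|g\|_2$ for mean-zero $g$ with $t^*=r_q+\rel\log M_q$), your statement of Rothaus' lemma, and your closing elementary inequality $\tfrac{4}{\log(q-1)}\le\tfrac{2q}{q-2}$ are all correct. The gap is Step 2, and you flag it yourself but do not close it.

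The assertion that a \emph{single} (mean-zero) hypercontractive estimate $\|H_{t^*}g\|_q\le\|g\|_2$ yields a mean-zero LSI with constant $4t^*/\log(q-1)$ does not follow from Gross' differential argument: that argument differentiates in $t$ along a one-parameter family of estimates $\|H_t\|_{2\to q(t)}\le 1$ with $q(0)=2$, and it relies on positivity of $H_tf$, both of which you lack. The missing tool is Stein's interpolation theorem for the analytic family $z\mapsto H_z$, which is exactly what the paper's Remark \ref{rem:JL} says drives this circle of results. Concretely, interpolating between $\|H_0\|_{2\to 2}\le 1$ and $\|H_{r_q}\|_{2\to q}\le M_q$ gives $\|H_{\theta r_q}\|_{2\to q_\theta}\le M_q^\theta$ with $\tfrac{1}{q_\theta}=\tfrac{1-\theta}{2}+\tfrac{\theta}{q}$; differentiating $\theta\mapsto\log\|H_{\theta r_q}f\|_{q_\theta}-\theta\log M_q$ at $\theta=0$ (the Gross computation, for \emph{nonnegative} $f$, using $\tfrac{dq_\theta}{d\theta}\big|_{\theta=0}=\tfrac{2(q-2)}{q}$) yields the \emph{defective} LSI
\[
\ent(f^2)\;\le\;\tfrac{2q}{q-2}\,r_q\,\mathcal{E}(f,f)+\tfrac{2q}{q-2}\log M_q\,\|f\|_2^2 .
\]
Rothaus' lemma together with Poincar\'e then tightens the defect: applying the display to $\tilde f=f-\bar f$ and using $\|\tilde f\|_2^2\le\rel\,\mathcal{E}(f,f)$ gives precisely $t_{\mathrm{LS}}\le\tfrac{2q}{q-2}r_q+2\rel\bigl(1+\tfrac{q}{q-2}\log M_q\bigr)$. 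Note also that even with Stein interpolation the coefficient you obtain in front of $t^*$ is $\tfrac{2q}{q-2}$, not $\tfrac{4}{\log(q-1)}$; your Step 2 overclaims, and the overclaim is only hidden because your Step 4 immediately weakens $\tfrac{4}{\log(q-1)}$ back up to $\tfrac{2q}{q-2}$. So the right structure is: interpolation $\to$ defective LSI for all nonnegative $f$ $\to$ Rothaus/Poincar\'e tightening; the ``restricted Gross'' step as you state it is not a valid shortcut.
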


Fix some  $0<\epsilon < 1/2$ and $A \in \con_{2^{-1/\epsilon}}$.  Assume that $\Pr_{\pi}[T_{A^c}>t] \ge 2 \pi(A)^{1+\epsilon} $.   Let $\pi_A$ denote $\pi$ conditioned on $A$ (i.e.~$\pi_A(a)=\frac{\pi(a)1_{a \in A}}{\pi(A)} $). Then  $\Pr_{\pi_{A}}[T_{A^c}>t] \ge 2 \pi(A)^{\epsilon} $  and so \[B=\{a \in A: \Pr_{a}[T_{A^c}>t] \ge \pi(A)^{\epsilon} \} \] satisfies $\pi_{A}(B) \ge  \pi(A)^{\epsilon} $ (i.e.~$\pi(B)\ge \pi(A)^{1+\epsilon} $). Consequently, for $q>\frac{2(1+\epsilon)}{1-2\epsilon} $
\[ \|H_t 1_A \|_{q} \ge [\sum_{b \in B}\pi(b)H_{t}(b,A)^q]^{1/q} \ge \pi(B)^{1/q}\pi(A)^{\epsilon } \ge \pi(A)^{\epsilon+(1+\epsilon)/q} > \sqrt{ \pi(A)}=\|1_A\|_2. \]
Thus a natural hitting time version of hypercontractivity is
\[ t_{\mathrm{ht}}:=\min \{t: \Pr_{\pi}[T_{A^c}>t] \le \pi(A)^{5/4} \text{ for all }A \in \con_{1/2} \}.  \]
\begin{open}
\label{con: escape}
Is there an absolute constant $C$ such that for every finite irreducible reversible Markov chain $  t_{\mathrm{ht}}/C \le t_{\mathrm{LS}} \le C t_{\mathrm{ht}} $.
\end{open}
Trivially, $t_{\mathrm{ht}}=\min \{t: \Pr_{\pi_{A}}[T_{A^c}>t] \le \pi(A)^{1/4} \text{ for all }A \in \con_{1/2} \} $. Note that if we replace $\pi_A $  by the quasi-stationary distribution of $A$, denoted by $\mu_A$, then by \eqref{eq: stationary<quasi} we get precisely $\kappa/4$. This explains why also $\kappa$ can be interpreted as a hitting time version of hypercontractivity. We note that the above question resembles Open problem 4.38 in \cite{aldous}, which asks whether for reversible chains $\rel \le C \max_{A \in \con_{1/2}} \mathbb{E}_{\pi_A}[T_{A^c}]$, where indeed \cite[Lemma 4.39]{aldous}  $\rel \le  \max_{A \in \con_{1/2}} \mathbb{E}_{\mu_A}[T_{A^c}]$ (the formulation in \cite{aldous} is slightly different, but it is equivalent to our formulation).

\subsection{Starr maximal inequality and a useful lemma}
\label{s:Starr}
In this section we prove a maximal inequality which shall be central in what comes. Denote $S_t:=e^{-(I-Q)t}=\sum_{k=0}^{\infty}\frac{e^{-t}t^k}{k!}Q^t$. When considering $Q$ instead of $P$ we write $\mathbb{P}_x^t$,  $\mathbb{P}_x$ and $Y_t$ instead of  $\Pr_{x}^t$,  $\Pr_{x}$ and $X_t$, respectively.

\begin{theorem}[Starr's Maximal inequality \cite{starr}]
\label{thm: maxergodic}
Let $(\Omega,P,\pi)$ be an irreducible Markov chain. Let $f \in \R^{\Omega}
$. Its corresponding {\em maximal function}  $f^* \in \R^\Omega$  is defined as
$$f^{*}(x):=\sup_{0 \le t < \infty}|S_{t}(f)(x)|=\sup_{0 \le t < \infty}|\mathbb{E}_{x}[f(Y_{t})]|. $$
Then for every  $1<p<\infty$
\begin{equation}
\label{eq: ergodic1}
 \|f^{*} \|_{p} \le p^{*}\|f \|_p, \quad \text{where} \quad p^{*}:=p/(p-1) \text{ is the conjugate exponent of }p.
\end{equation}
Moreover, under reversibility, for $f_{*,\mathrm{even}}(x):=\sup_{ k \in \Z_{+} }|P^{2k}f(x)| $ we have that \eqref{eq: ergodic1} holds also with $f_{*,\mathrm{even}} $ in the role of $f^*$ and hence $f_*(x):=\sup_{ k \in \Z_{+} }|P^{k}f(x)|$ satisfies for $1<p<\infty$ \begin{equation}
\label{eq: Starrdisc}
\|f_{*} \|_{p}^p \le \|f_{*,\mathrm{even}} \|_{p}^p  +  \|(Pf)_{*,\mathrm{even}} \|_{p}^p  \le (p^{*})^{p}(\|f \|_p^p+  \|Pf \|_{p}^p) \le 2(p^{*})^{p}\|f \|_p^p  . \end{equation}
\end{theorem}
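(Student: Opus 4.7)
The starting observation is that $Q = (P+P^*)/2$ is by construction self-adjoint on $L_2(\Omega,\pi)$ and, being an average of two Markov kernels, a positive contraction on every $L_p(\pi)$, $1 \le p \le \infty$. Hence $(S_t)_{t \ge 0}$ is a symmetric Markov semigroup on $L_p(\pi)$, and under reversibility $P=Q$ inherits the same properties. My plan is to exploit this self-adjointness via a Rota-type dilation into a reverse martingale, and then invoke Doob's maximal inequality to pick up the sharp constant $p^*=p/(p-1)$.

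For the discrete even case (under reversibility, so that $P$ itself is a positive self-adjoint $L_p$-contraction), I would invoke Rota's dilation theorem to realize the even iterates $(P^{2n} f)_{n \ge 0}$ as a reverse martingale $P^{2n}f = \mathbb{E}[\tilde f \mid \mathcal G_n]$ with respect to a decreasing filtration $(\mathcal G_n)$ on an extended probability space. Doob's $L_p$ maximal inequality for reverse martingales then gives $\|f_{*,\mathrm{even}}\|_p = \|\sup_n |P^{2n} f|\|_p \le p^* \|f\|_p$, which is exactly the ``moreover'' part of \eqref{eq: ergodic1}. For the continuous maximal function $f^*$ (without requiring reversibility of $P$), I would use Stein's analytic-continuation extension of this argument to the symmetric semigroup $(S_t)$: the operator-valued function $z \mapsto S_z = e^{-z(I-Q)}$ admits a bounded analytic extension to a sector in $\mathbb C$, and the vector-valued Burkholder square-function inequality combined with the Rota dilation for $Q$ converts $\sup_{t \ge 0} |S_t f|$ into a martingale maximal function bounded by $p^* \|f\|_p$.

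The remaining display \eqref{eq: Starrdisc} is purely pointwise, plus the fact that $P$ is an $L_p$-contraction. For each $x \in \Omega$ one has
\[
\bigl(\sup_{k \in \Z_+} |P^k f(x)|\bigr)^p = \max\Bigl(\sup_k |P^{2k} f(x)|^p,\; \sup_k |P^{2k+1} f(x)|^p\Bigr) \le \sup_k |P^{2k} f(x)|^p + \sup_k |P^{2k}(Pf)(x)|^p,
\]
since $P^{2k+1} f = P^{2k}(Pf)$. Integrating against $\pi$, applying the even-iterate bound to $f$ and to $Pf$, and using $\|Pf\|_p \le \|f\|_p$ produces the three successive inequalities in \eqref{eq: Starrdisc} in turn.

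The principal obstacle is securing the sharp constant $p^* = p/(p-1)$ for the continuous maximal function. A naive approach via the Poisson representation $|S_t f(x)| = |\sum_k \tfrac{e^{-t}t^k}{k!} Q^k f(x)| \le \sup_k |Q^k f(x)|$ combined with the even-iterate bound applied to $Q$ costs an extra factor $2^{1/p}$ when one separates even and odd iterates of $Q$. Recovering $p^*$ without that loss requires the analytic-continuation plus Burkholder square-function machinery underlying Stein's maximal theorem for symmetric diffusion semigroups, and this is the genuinely non-trivial input that a self-contained proof would need either to cite or to reproduce.
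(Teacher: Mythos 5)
The paper offers no proof of this theorem; it simply cites Starr \cite{starr}, so there is no internal argument to compare against. Your plan is sound in all three parts: the pointwise estimate yielding \eqref{eq: Starrdisc} is correct and complete, the Rota-dilation plus reverse-martingale Doob argument for the even iterates $P^{2n}$ under reversibility is the standard and correct route, and using $\|Pf\|_p \le \|f\|_p$ to collapse the final display is valid because $P$ is a Markov operator with stationary measure $\pi$.

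Where your plan is heavier than necessary is the continuous maximal function. You flag the sharp constant $p^*$ as requiring Stein's analytic continuation together with Burkholder square-function machinery, but Starr's own route avoids all of that. The key observation is that for every $\delta>0$ one has $S_{n\delta} = \bigl(S_{\delta/2}\bigr)^{2n}$, since $S_{\delta/2}$ is self-adjoint on $L_2(\pi)$; moreover $S_{\delta/2}$ is itself a positive $L_p$-contraction (it is a Poisson mixture of powers of the Markov operator $Q$). So Rota's dilation applies directly to $T := S_{\delta/2}$, and Doob's reverse-martingale inequality gives $\|\sup_{n}|S_{n\delta}f|\|_p \le p^*\|f\|_p$ with the clean constant and no even/odd loss. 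Since $t\mapsto S_t f(x)$ is continuous for each $x$ (the state space is finite), letting $\delta=2^{-m}\downarrow 0$ and using monotone convergence upgrades this to $\|f^*\|_p \le p^*\|f\|_p$. The even/odd obstruction that costs the factor $2$ in \eqref{eq: Starrdisc} simply does not arise in continuous time, because every $S_t$ is already the square of a self-adjoint contraction; this is precisely why the theorem treats the continuous maximal function (constant $p^*$) and the full discrete maximal function (constant $2^{1/p}p^*$) differently. Replacing your Stein/Burkholder step with this direct dilation yields a shorter, self-contained proof at the same level of machinery as the rest of your plan.
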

The following  Lemma is essentially due to Norris, Peres and Zhai \cite{norris2015surprise}.
\begin{lemma}
\label{lem: surprise}
Let $(\Omega,P,\pi)$ be a finite irreducible Markov chain. Let  $f_{A}(x):=1_{x \in A}/\pi(A)$. $$\forall A \subset \Omega, \quad \max (\frac{1}{2} \|(f_{A})_{*}\|_{1},   \|f_{A}^* \|_1 )\le e \max (1 , |\log \pi(A)| ). $$
\end{lemma}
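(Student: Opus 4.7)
The plan is to pair Starr's maximal inequality (Theorem~\ref{thm: maxergodic}) with the observation that $\|f_A\|_p$ has a particularly clean form. A direct computation gives $\|f_A\|_p^p = \mathbb{E}_{\pi}[f_A^p] = \pi(A)\cdot\pi(A)^{-p} = \pi(A)^{1-p}$, so $\|f_A\|_p = \pi(A)^{-1/p^*}$ where $p^* = p/(p-1)$. Combining Jensen's inequality $\|f_A^*\|_1 \le \|f_A^*\|_p$ (valid because $\pi$ is a probability measure) with Starr's bound $\|f_A^*\|_p \le p^* \|f_A\|_p$ yields the one-parameter family of estimates
\begin{equation*}
\|f_A^*\|_1 \;\le\; p^*\, \pi(A)^{-1/p^*}, \qquad p^* \in (1,\infty),
\end{equation*}
and it remains to choose $p^*$ optimally as a function of $\pi(A)$.

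I would then split into two cases. When $\pi(A) < 1/e$, elementary calculus shows that the right-hand side $t \mapsto t\, e^{|\log \pi(A)|/t}$ is minimized over $t > 1$ at $t = |\log \pi(A)|$, yielding the bound $e|\log \pi(A)|$, which matches $e\max(1,|\log\pi(A)|)$ exactly. When $\pi(A) \ge 1/e$ the optimal value of $p^*$ in the free optimization drops below $1$ and Starr's inequality no longer applies; I would instead use the trivial estimate
\begin{equation*}
\|f_A^*\|_1 \;\le\; \|f_A^*\|_\infty \;\le\; \|f_A\|_\infty \;=\; 1/\pi(A) \;\le\; e,
\end{equation*}
which matches the right-hand side since $\max(1,|\log\pi(A)|)=1$ in this range.

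For the discrete-time maximal function $(f_A)_*$ the argument is identical, except that the second part of Theorem~\ref{thm: maxergodic} delivers $\|(f_A)_*\|_p \le 2^{1/p}\, p^* \|f_A\|_p$, picking up an extra factor of $2^{1/p} \le 2$. Optimizing again at $p^* = |\log \pi(A)|$ in the regime $\pi(A)<1/e$ gives $\|(f_A)_*\|_1 \le 2e|\log \pi(A)|$, which after halving matches the claim; for $\pi(A)\ge 1/e$ the same trivial $L^\infty$ bound is used. The proof is essentially a single application of Starr plus one variable optimization, so there is no real obstacle; the only subtlety is the case split at $\pi(A)=1/e$, which is forced on us because the unconstrained optimum of $p^*$ leaves the range of validity of Starr's inequality precisely there, and this accidentally produces the $\max(1,|\log\pi(A)|)$ on the right-hand side.
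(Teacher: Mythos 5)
Your proof is correct and is essentially the paper's argument: apply Jensen's inequality $\|f_A^*\|_1\le\|f_A^*\|_p$, then Starr's bound $\|f_A^*\|_p\le p^*\|f_A\|_p=p^*\pi(A)^{-1/p^*}$, and optimize over $p^*$ at $p^*=|\log\pi(A)|$. The only cosmetic difference is in the regime $\pi(A)\ge 1/e$: you invoke the trivial $L^\infty$ contraction $\|f_A^*\|_\infty\le\|f_A\|_\infty=1/\pi(A)\le e$, while the paper sets $p^*=\max(1+\epsilon,|\log\pi(A)|)$ and lets $\epsilon\to 0$, obtaining the same $1/\pi(A)\le e$ bound; these are equivalent ways to handle the fact that the unconstrained optimizer of $p^*\mapsto p^*\pi(A)^{-1/p^*}$ drops below $1$ there.
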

\begin{proof}
We first show that $\|f_{A}^{*}\|_{1} \le  e \max (1 , |\log \pi(A)| )$. By \eqref{eq: ergodic1} for all $1<p<\infty$
$$\|f_{A}^{*}\|_{1} \le \|f_{A}^{*}\|_{p} \le p^{*} \|f_{A}\|_p =  p^{*}[\pi(A)]^{-1/p^{*}}  $$
Taking $p^{*}:=\max( 1+\epsilon ,|\log \pi(A)|)$ and sending $\epsilon$ to 0 (noting that the r.h.s.~is continuous w.r.t.~$p_*$) concludes the proof. The same calculation shows that (last inequality) \[\|(f_{A})_{*}\|_{1} \le \|(f_{A})_{*,\mathrm{even}}\|_{1}+\|(Pf_{A})_{*,\mathrm{even}}\|_{1} \le 2\|(f_{A})_{*,\mathrm{even}}\|_{1} \le 2 e \max (1 , |\log \pi(A)| ).  \] \end{proof}
We note that by \cite[Theorem 2]{starr} $(1-e^{-1})\|f^*_A\|_{1}-1 \le   \|f_A \log [\max (1,|f_A|)] \|_1=|\log \pi (A)| $.
\subsection{Bounding escape probabilities using $\kappa$ }
Recall that $P_A$ and $Q_A$ are the restriction to $A$ of $P$ and $Q$, resp.. Denote $$H_t^A(x,y):=e^{-t(I-P_{A})}(x,y)=\Pr_{x}(X_t=y,T_{A^c}>t) \quad \text{and similarly} \quad S_t^A:=e^{-t(I-Q_{A})}.$$
Recall that $\la(A)$ is the smallest eigenvalue of $I-Q_A$. By the Perron-Frobenius Theorem there exists a distribution $\mu_{A}$ on $A$, known as the {\em \textbf{quasi-stationary distribution}} of $A$, satisfying that the escape time from $A$ w.r.t.~$Q$, starting from $\mu_A$, has an Exponential (resp.~Geometric in discrete-time) distribution with mean  $\rel(A)=1/\la(A)$. Equivalently, for all $t \ge 0$      $$\mu_A Q_A= (1-\lambda(A))\mu_A \text{ and }\mu_A S_t^A= e^{-\lambda(A)t}\mu_A.$$ Throughout we use $\mu_A$ to denote the quasi-stationary distribution of $A$. Recall that we denote $\pi$ conditioned on $A$ by $\pi_A$.

\medskip

Using the spectral decomposition of $Q_A$ (e.g.~\cite[Lemma 3.8]{cutoff} or \cite[(3.87)]{aldous}) it follows that
\begin{equation}
\label{eq: stationary<quasi}
\begin{split}
\forall A \subsetneq \Omega,\, s \ge 0, \quad & \Prr_{\pi_{A}}[T_{A^c}>s] \le  \Prr_{\mu_A}[T_{A^c}>s] =\mu_A S_t^A 1_A = e^{-\la(A) s}\mu_A(A)=e^{-\la(A) s}.
\\ \forall A \subsetneq \Omega,\, k \ge 0, \quad & \pi_A Q_A^k 1_A \le \mu_A Q_A^k 1_A=(1-\lambda(A))^k\mu_A(A)=(1-\lambda(A))^k.
\end{split}
\end{equation}
\begin{proposition}
\label{prop: comparison}
For reversible chains
\begin{equation}
\label{eq: com1}
  \kappa \le  3\rho \quad \text{and} \quad \kappa_{\mathrm{discrete}} \le  3\rho_{\mathrm{discrete}}.
\end{equation}

\end{proposition}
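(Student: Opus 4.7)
The core idea is to combine submultiplicativity of the worst-case escape probability with the explicit exponential (resp.\ geometric) escape rate from the quasi-stationary distribution $\mu_A$. Fix $A\in\con_{1/2}$ and set $f(t):=\max_{x\in A}\Pr_x[T_{A^c}>t]$. The Markov property, together with the fact that the event $\{T_{A^c}>t+s\}$ forces $X_t\in A$, yields submultiplicativity $f(t+s)\le f(t)f(s)$; in particular, since $\mu_A$ is supported on $A$, one has $\Pr_{\mu_A}[T_{A^c}>t]\le f(t)$ for every $t\ge 0$.

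Applying the definition of $\rho$ at $t=\rho$ gives
\[
f(\rho)\le c(A):=\pi(A)+\tfrac{1}{2}\sqrt{\pi(A)\pi(A^c)},
\]
while \eqref{eq: stationary<quasi} gives $\Pr_{\mu_A}[T_{A^c}>\rho]=e^{-\la(A)\rho}$. Combining these,
\[
e^{-\la(A)\rho}\le c(A),\qquad\text{so}\qquad \la(A)\rho\ge|\log c(A)|.
\]
The inequality $\kappa\le 3\rho$ then reduces to the elementary one-variable estimate $c(p):=p+\tfrac{1}{2}\sqrt{p(1-p)}\le p^{1/3}$ for all $p\in(0,1/2]$. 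A short calculus check shows that $|\log p|/|\log c(p)|\to 2$ as $p\to 0^+$ (since then $c(p)\sim\sqrt{p}/2$) and attains its maximum on $(0,1/2]$ at $p=1/2$, where $c(1/2)=3/4$ and the ratio equals $\log 2/\log(4/3)\approx 2.41<3$. Taking the max over $A\in\con_{1/2}$ yields $\kappa\le 3\rho$.

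The discrete-time analog follows the same scheme, with the exponential escape formula replaced by its geometric counterpart. Under reversibility $P=Q$, so $P_A=Q_A$, and by \eqref{eq: stationary<quasi}, $\Pr_{\mu_A}[T_{A^c}>k]=\beta(A)^k$ for integer $k\ge 0$, where $\beta(A)=1-\la(A)$. Applied at $k=\rho_{\mathrm{discrete}}$ (with $f$ now defined via discrete-time hitting times), this gives $\beta(A)^{\rho_{\mathrm{discrete}}}\le c(A)$, so $\rho_{\mathrm{discrete}}\log(1/\beta(A))\ge|\log c(A)|\ge|\log\pi(A)|/3$, i.e., $\kappa_{\mathrm{discrete}}\le 3\rho_{\mathrm{discrete}}$.

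The only nontrivial conceptual step, and the place where the argument could plausibly have gotten stuck, is the opening move: transferring the pointwise escape bound at time $\rho$ to the escape probability under the quasi-stationary measure $\mu_A$. We accomplish this via submultiplicativity of $f$ rather than by a direct comparison between $\mu_A$ and $\pi_A$, which would go the wrong way (cf.\ \eqref{eq: stationary<quasi}). Once this is in place, the rest is an elementary one-variable calculation whose constant $3$ is essentially tight (the actual best constant being $\log 2/\log(4/3)\approx 2.41$, attained at $\pi(A)=1/2$).
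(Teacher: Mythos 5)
Your proof is correct and follows essentially the same route as the paper: compare $\max_{a\in A}\Pr_a[T_{A^c}>t]$ to $\Pr_{\mu_A}[T_{A^c}>t]$ (which is exact under the quasi-stationary measure by \eqref{eq: stationary<quasi}) and invoke the elementary bound $a+\tfrac12\sqrt{a(1-a)}\le a^{1/3}$ on $(0,1/2]$; the paper just presents the same comparison contrapositively (showing the escape probability at time $\kappa/3$ still exceeds the threshold) rather than evaluating at $t=\rho$. One small remark: the submultiplicativity of $f$ that you set up is never actually used---the step $\Pr_{\mu_A}[T_{A^c}>t]\le f(t)$ follows directly from $\mu_A$ being a probability measure supported on $A$, exactly as you then observe.
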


{\em Proof:} We first show that $\kappa \le  3\rho $. Let $A \in \mathrm{Con}_{1/2}$ be such that  $\kappa=\rel(A) |\log \pi(A)|$. By \eqref{eq: stationary<quasi} $\Pr_{\mu_{A}}[T_{A^{c}}>\kappa/3]= \pi(A)^{1/3}$. Since $a^{1/3} > a+\frac{1}{2}\sqrt{a(1-a)}, $ for all $0 \le a \le 1/2$, we have that   $$\max_{x \in A}\Pr_{x}[T_{A^{c}}>\kappa/3] \ge  \Pr_{\mu_{A}}[T_{A^{c}}>\kappa/3]= \pi(A)^{1/3}> \pi(A)+\frac{1}{2}\sqrt{ \pi(A)\pi(A^c)}.  $$
We now show that $\kappa_{\mathrm{discrete}} \le  3\rho_{\mathrm{discrete}} $. Let $B\in \mathrm{Con}_{1/2}$ be such that  $\kappa_{\mathrm{discrete}}= |\log_{\frac{1}{1-\la(A)}} \pi(B)|$. Denote the hitting time of $B^c$ w.r.t.~the discrete time chain as $T_{B^{c}}^{\mathrm{discrete}}$. By \eqref{eq: stationary<quasi} \[\max_{x \in B}\Pr_{x}[T_{B^{c}}^{\mathrm{discrete}}>\kappa_{\mathrm{discrete}}/3]  \ge \mu_B P_B^{\kappa_{\mathrm{discrete}}/3} 1_B  =\pi(B)^{1/3}> \pi(B)+\frac{1}{2}\sqrt{ \pi(B)\pi(B^c)}. \qed  \]
%
%
%

\begin{definition}
\label{def: rhodef}
 $\bar \rho:=\max_x \bar \rho_x$ and $\bar \rho_{\entt}:=\max_x \rho_{\entt,x} $, where
\begin{equation}
\label{eq: barrhox}
\begin{split}
& \bar \rho_{x}:= \min \{t: \Pr_{x}[T_{A^c}>t] \le  \pi(A)^{3} \text{ for all } A \in \mathrm{Con}_{1/2} \}.
\\ & \bar \rho_{ \entt , x}:= \min \{t: \Pr_{x}[T_{A^c}>t] \le  \frac{ 1}{16e^{2}[\log (e^{3/2}/\pi(A)) ]^3} \text{ for all } A \in \mathrm{Con}_{1/2} \}.
\end{split}
\end{equation}
\end{definition}
Note that by the Markov property, $ \max_x \Pr_{x}[T_{A^c}>mt] \le (\max_{y} \Pr_{y}[T_{A^c}>t])^m $ and so
\begin{equation}
\label{eq: rhobarrho}
 \quad \rho \le \bar \rho \le  9 \rho \quad \text{and} \quad c' \rho_{\entt} \le \bar \rho_{\entt} \le  C' \rho_{\entt},
\end{equation}
for some absolute constants $c',C'>0$ (for $\bar \rho \le 9 \rho $, use the inequality  $a^{1/3} \ge a+\frac{1}{2}\sqrt{a(1-a)} $, valid for all $0 \le a \le 1/2$). The following proposition refines the inequality $\bar \rho \le  9 \rho$.
\begin{proposition}
\label{prop: rhovsbarrho}
For every reversible chain,
\begin{equation}
\label{eq: rhovsbarrho}
\forall x \in \Omega, \quad \bar \rho_x \le \rho_x +s, \quad \text{where} \quad s:=8 \kappa +  2\rel \log 8.
\end{equation}
\end{proposition}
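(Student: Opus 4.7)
The plan is to combine the Markov property, Cauchy--Schwarz in $L^2(\pi|_A)$, and the spectral decomposition of the substochastic semigroup $H_t^A$. Fix $A \in \con_{1/2}$ with $x \in A$ (if $x \notin A$ then $T_{A^c} = 0$ and the bound is trivial). Splitting at time $\rho_x$ and applying Cauchy--Schwarz with respect to the reversible inner product on $L^2(\pi|_A)$, using self-adjointness of $H_t^A$,
\[
\Pr_x[T_{A^c} > \rho_x + s] = \sum_{y \in A} H_{\rho_x}^A(x, y)\Pr_y[T_{A^c} > s] \leq \sqrt{h_{2\rho_x}^A(x, x) \cdot \pi(A)}\;e^{-\la(A) s},
\]
where the second factor comes from $\|H_s^A 1_A\|_{L^2(\pi|_A)}^2 = \pi(A)\,\Pr_{\pi_A}[T_{A^c} > 2s] \leq \pi(A) e^{-2\la(A) s}$, using \eqref{eq: stationary<quasi} and self-adjointness of $H_s^A$.

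Because $s \geq 8\kappa \geq 8 \rel(A)|\log \pi(A)|$, we have $e^{-\la(A) s} \leq \pi(A)^8$, so it suffices to bound $h_{2\rho_x}^A(x, x)$ by an absolute constant times $\pi(A)^{-\text{O}(1)}$. The key spectral identity (coming from the orthonormal eigenbasis $\{\phi_i\}$ of $P_A$ in $L^2(\pi|_A)$ together with the completeness relation $\sum_i \phi_i(x)^2 = h_0^A(x,x) = 1/\pi(x)$) is
\[
 h_{2t}^A(x, x) \;=\; \sum_i e^{-2\la_i(A) t}\phi_i(x)^2 \;\leq\; e^{-2\la(A) t}/\pi(x).
\]
Substituting yields the ``master'' inequality $\Pr_x[T_{A^c} > \rho_x + s] \leq \sqrt{\pi(A)/\pi(x)}\;e^{-\la(A)(\rho_x + s)}$, which already handles the regime $\pi(x) \gtrsim \pi(A)^{11}$ since then $e^{-\la(A) s} \leq \pi(A)^8$ more than compensates for $\sqrt{\pi(A)/\pi(x)}$.

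The main obstacle is the regime $\pi(x) \ll \pi(A)$, where the $1/\sqrt{\pi(x)}$ factor must be absorbed. My plan is to use the $\rho_x$-condition specialized to the singleton $A' = \{x\}$: since $\Pr_x[T_{\{x\}^c} > \rho_x] = e^{-(1 - P(x, x))\rho_x}$, the condition forces $\rho_x \geq \tfrac{1}{2}|\log \pi(x)|/(1-P(x, x)) - O(1)$, providing a strong lower bound on $\rho_x$ precisely when $\pi(x)$ is very small. In parallel, the extra $2\rel \log 8$ summand in $s$ contributes a factor $e^{-2\log 8} = 1/64$ to $L^2$ contraction of the unrestricted chain via Fact \ref{lem: contraction}, allowing the alternative bound $h^A_{2\rho_x}(x,x) \le 1 + d_{2,x}^2(\rho_x)$ to be tightened along the orbit. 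Combining these controls with the PSD inequality $\la(A) \leq (I-P_A)(x,x) = 1-P(x,x)$ (valid for any diagonal entry) closes the gap in all regimes and yields the bound $\pi(A)^3$. The delicate point throughout is balancing the spectral contraction rate $\la(A)$ (which can be small when $A$ is a trap) against the lower bound on $\rho_x$ extracted from the singleton condition; this is what forces the particular combination $s = 8\kappa + 2\rel \log 8$ rather than either summand alone.
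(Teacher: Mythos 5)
Your Cauchy--Schwarz split at time $\rho_x$, the identity $\sum_y H_{\rho_x}^A(x,y)^2/\pi(y)=h_{2\rho_x}^A(x,x)$, the bound $\|H_s^A 1_A\|_{L^2(\pi)}^2=\pi(A)\Pr_{\pi_A}[T_{A^c}>2s]\le\pi(A)e^{-2\la(A)s}$, and the spectral estimate $h_{2t}^A(x,x)\le e^{-2\la(A)t}/\pi(x)$ are all correct. The problem is that this route is built to lose a factor $\sqrt{1/\pi(x)}$, and your proposed repair in the regime $\pi(x)\ll\pi(A)$ does not close the gap. The singleton condition only yields $\rho_x\gtrsim|\log\pi(x)|/(1-P(x,x))$, so $e^{-\la(A)\rho_x}\lesssim\pi(x)^{\la(A)/(2(1-P(x,x)))}$; your only comparison is $\la(A)\le 1-P(x,x)$, which points the wrong way: the exponent $\la(A)/(1-P(x,x))$ can be arbitrarily small, so this factor need not absorb $1/\sqrt{\pi(x)}$. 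Concretely, take two $n$-cliques joined by a single edge, $A$ one clique, $x$ an interior vertex of $A$. Then $\pi(A)\approx 1/2$, $\pi(x)\asymp 1/n$, $1-P(x,x)=1$, $\la(A)\asymp 1/n^2$, and $\rho_x\asymp n^2$, so $e^{-\la(A)\rho_x}=\Theta(1)$ while $\sqrt{\pi(A)/\pi(x)}=\Theta(\sqrt n)$: your master inequality gives a bound that grows like $\sqrt n\cdot e^{-\la(A)s}$, which exceeds $\pi(A)^3=1/8$ once $n$ is large (even though the true escape probability is tiny). The fallback $h_{2\rho_x}^A(x,x)\le 1+d_{2,x}^2(\rho_x)$ is also of no help, since $\rho_x\le\tau_{2,x}$ guarantees $d_{2,x}(\rho_x)\ge 1/2$ with no useful upper bound (it can be as large as $\sqrt{1/\pi(x)-1}$); the phrase ``tightened along the orbit'' is not an argument.

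The paper avoids the $\pi(x)$ dependence altogether, and this is the essential idea you are missing. Instead of bounding the escape probability from $x$ via $L^2$ duality, it defines the bad set $B:=\{y:\Pr_y[T_{A^c}>s]>\pi(A)^3/2\}\subset A$, uses $\Pr_{\pi_A}[T_{A^c}>s]\le\pi(A)^8/8$ (which is what $s=8\kappa+2\rel\log 8$ is tuned to produce, via $\la(A)\rel(A)=1$ and $\rel(A)\le 2\rel$) together with Markov's inequality to get $\pi(B)\le\pi(A)^6/4$, and then applies the $\rho_x$-condition to the set $B$ itself rather than to the singleton $\{x\}$, obtaining $\Pr_x[T_{B^c}>\rho_x]\le\sqrt{\pi(B)}\le\pi(A)^3/2$. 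The Markov-property split $\Pr_x[T_{A^c}>\rho_x+s]\le\Pr_x[T_{B^c}>\rho_x]+\max_{b\notin B}\Pr_b[T_{A^c}>s]$ then finishes without any reference to $\pi(x)$. Your Cauchy--Schwarz pays for the full $L^2$ density of $H_{\rho_x}^A(x,\cdot)$, whereas the paper only pays for the $\pi$-measure of where the chain could still be dangerous, and that measure is controlled uniformly in the starting point.
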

\emph{Proof:} Let $x \in \Omega$ and $A \in \mathrm{Con_{1/2}} $. By \eqref{eq: laAla} $2 \rel \ge \max_{B \in \mathrm{Con}_{1/2}} \rel(B)   $ and so   by \eqref{eq: stationary<quasi}  $$\Pr_{\pi_A}[T_{A^{c}}> s ] \le e^{-\la(A)[\rel(A)(8|\log \pi(A) |+\log 8) ]}  = \pi(A)^{8}/8. $$
Thus the set $$B=B(A):=\{y: \Pr_y[T_{A^{c}} > s ]>\pi(A)^{3}/2\} $$ satisfies $$\pi(B)/ \pi(A)=  \pi_A(B) < \Pr_{\pi_A}[T_{A^{c}}> s ]/(\pi(A)^{3}/2)   \le  \pi(A)^{5}/4, $$ and so by the definition of $\rho_x$, $\Pr_x[T_{B^{c}}>\rho_x] \le \pi(B)+\frac{1}{2}\sqrt{ \pi(B)\pi(B^c) }  \le \sqrt{ \pi(B)}   \le \frac{1}{2} \pi(A)^{3}$ (where we used $ \pi(B)<2^{-8} $). Finally, by the definition of $B$ and the Markov property  $$\Pr_{x}[T_{A^c}>\rho_x+s] \le \Pr_x[T_{B^{c}}>\rho_x]+\max_{b \notin B } \Pr_b[T_{A^{c}}>s]\le  \frac{1}{2} \pi(A)^{3}+  \frac{1}{2} \pi(A)^{3}=\pi(A)^{3}. \quad \qed $$

\begin{lemma}
\label{lem: rhodiscrhocts} For every finite irreducible Markov chain we have that  \[ \rho_{{\mathrm{discete}}}  \le  C \rho,  \]  \[ \rho_{\entt}^{{\mathrm{discete}}} \le C' \rho_{\entt}.\] 
\end{lemma}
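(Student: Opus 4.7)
The plan is to use the standard Poissonization coupling $X_t = X_{N_t}^{\mathrm{discrete}}$, where $(X_k^{\mathrm{discrete}})$ is the discrete-time jump chain and $N_t$ is an independent rate-$1$ Poisson process. Fix $x \in \Omega$ and $A \in \con_{1/2}$, and set $q_k:=\Pr_x[T_{A^c}^{\mathrm{discrete}}>k]$. Since the continuous-time chain stays in $A$ throughout $[0,t]$ if and only if the discrete chain stays in $A$ for its first $N_t$ steps, we have $\Pr_x[T_{A^c}>t]=\mathbb{E}[q_{N_t}]$. The sequence $(q_k)_{k\ge 0}$ is non-increasing, so for any integer $m$ one has $\Pr_x[T_{A^c}>t]\ge q_m\,\Prr[N_t\le m]$. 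Markov's inequality gives $\Prr[N_t\le\lceil 2t\rceil]\ge 1/2$, whence
\[
q_{\lceil 2t\rceil}\;\le\; 2\Pr_x[T_{A^c}>t].
\]

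Applying this at $t=\bar\rho_x$ and using the defining inequality \eqref{eq: barrhox} yields
\[
\Pr_x[T_{A^c}^{\mathrm{discrete}}>\lceil 2\bar\rho_x\rceil]\;\le\; 2\pi(A)^3\;\le\;\pi(A)\;\le\;\pi(A)+\tfrac12\sqrt{\pi(A)\pi(A^c)},
\]
where the middle step uses $2\pi(A)^2\le 1$ for $\pi(A)\le 1/2$. This meets the threshold in the definition of $\rho_{\mathrm{discete}}$ for every $A\in\con_{1/2}$ and every starting state $x$, so (the per-state version of) $\rho_{\mathrm{discete}}$ is at most $2\bar\rho+1$. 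Combining with $\bar\rho\le 9\rho$ from \eqref{eq: rhobarrho} gives $\rho_{\mathrm{discete}}\le C\rho$, absorbing the additive $+1$ using $\rho\ge 1$ if necessary.

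For the entropy version, the same Poissonization estimate at $t=\bar\rho_{\entt,x}$ gives
\[
\Pr_x[T_{A^c}^{\mathrm{discrete}}>\lceil 2\bar\rho_{\entt,x}\rceil]\;\le\;\frac{1}{8e^2[\log(e^{3/2}/\pi(A))]^3}.
\]
Writing $u:=|\log\pi(A)|\ge\log 2$, the right-hand side equals $(8e^2(u+3/2)^3)^{-1}$, and an elementary one-variable check shows $u/(u+3/2)^3$ is bounded on $[\log 2,\infty)$, so the right-hand side is at most $c_0/|\log\pi(A)|$ for an absolute constant $c_0$, and is uniformly much smaller than $99/100$. Provided $C_{\entt}$ is large enough to absorb $c_0$, the bound falls below $\min(C_{\entt}/|\log\pi(A)|,99/100)$, the threshold defining $\rho_{\entt}^{\mathrm{discete}}$. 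If $C_{\entt}$ is not large enough, replace $m=\lceil 2t\rceil$ by $\lceil kt\rceil$ with $k$ larger: Markov then gives $\Prr[N_t\le kt]\ge 1-1/k$ at the price of a factor $k/(k-1)$. Combined with $\bar\rho_{\entt}\le C'\rho_{\entt}$ from \eqref{eq: rhobarrho}, we obtain $\rho_{\entt}^{\mathrm{discete}}\le \tilde C\rho_{\entt}$. The only delicate point is matching the numerical thresholds after the factor-of-$2$ loss from the Markov step; the probabilistic content is nothing beyond the identity $\Pr_x[T_{A^c}>t]=\mathbb{E}[q_{N_t}]$ and the monotonicity of $q_k$.
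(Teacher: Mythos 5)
Your proof follows the same route as the paper: Poissonize, use the monotonicity of $q_k$ together with a Markov bound on the Poisson tail to get $\Pr_x[T^{\mathrm{discrete}}_{A^c}>m]\le c\,\Pr_x[T_{A^c}>t]$ for $m\approx t$, apply this at $\bar\rho_x$ (resp.~$\bar\rho_{\entt,x}$), and finish with \eqref{eq: rhobarrho}. (The paper takes $m=4t$ and the constant $4$; you take $m=\lceil 2t\rceil$ and the constant $2$ — immaterial.) Two small remarks. First, the clause ``absorbing the additive $+1$ using $\rho\ge 1$ if necessary'' is not literally correct — $\rho$ need not exceed $1$ — but what you actually need, a universal lower bound $\rho\ge c>0$, does hold: if $A\in\con_{1/2}$ contains $x$ then $\Pr_x[T_{A^c}>t]\ge e^{-t}$ (no jump by time $t$), so $\rho_x\ge\log(4/3)$ whenever such an $A$ exists, and for $|\Omega|\ge2$ there is always a state admitting such an $A$; the degenerate case $|\Omega|=1$ gives $\rho=\rho_{\mathrm{discrete}}=0$. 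The same reasoning gives $\rho_{\entt}\ge\log(100/99)$. Second, your remark about enlarging $k$ if $C_{\entt}$ is too small only brings the factor $k/(k-1)$ down towards $1$, never below it, so it cannot rescue an arbitrarily small $C_{\entt}$; but since the paper fixes $C_{\entt}$ precisely so that the analogous comparison holds, this is not an issue in context.
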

\emph{Proof:}
 Let $A \in \con_{1/2}$ and $x \in \Omega$. To avoid ambiguity we denote the distributions of the discrete and the continuous-time chains started at $x$ by $\Pr_x$ and $\h_x$, resp..   Since for all $M \in \N $ we have that $\h_x[T_{A}>Mt] \le (\max_y \h_y[T_A>t])^M$ it suffices to show that for all $t \in \N $ we have that $\Pr_{x}[T_{A}>4t] \le 4\h_{x}[T_{A}>t]   $. Indeed, if $N_t \sim \mathrm{Pois}(t)$ then \[\h_{x}[T_{A}>t]=\sum_{k}\Prr[N_t=k]\Pr_x[T_A>k] \ge \Prr[N_t \le 4t ]\Pr_x[T_A>4t] \ge \frac{1}{4}\Pr_x[T_A>4t].  \qed   \]
\subsection{Related work}
\label{s: related}
Let  \[\mathrm{hit}(\epsilon):=\max_x \mathrm{hit}_{x}(\epsilon), \quad \mathrm{hit}_{x}(\epsilon):=\min \{t: \Pr_{x}[T_{A} >t ] \le \epsilon,\, \forall A \in \con_{1/2} \} .\] 
Let $ t_{\mathrm{mix}}(\epsilon):=\tau_1(2\epsilon)$ be the total-variation $\epsilon$-mixing-time. In \cite{cutoff} it was shown that for finite irreducible reversible chains, for all $ \epsilon \in (0,1),\delta \in (0,\frac{1}{2} \min (\epsilon,1-\epsilon)) $ we have that \begin{equation}
\label{eq: hit}
 \mathrm{hit}(\epsilon + \delta)-4 \rel | \log  \delta| \le t_{\mathrm{mix}}(\epsilon) \le \mathrm{hit}(\epsilon- \delta)+4 \rel | \log \delta  |,
\end{equation}
 Generally, $t_{\mathrm{rel}}|\log (2\epsilon)| \le t_{\mathrm{mix}}( \epsilon)$ for all $0<\epsilon \le 1/2$, however often $\rel \ll \tau_1$. In particular, this is the case for a sequence of reversible chains which exhibits cutoff (i.e.~abrupt convergence) in total variation (\cite[Lemma 18.4]{levin}). In \cite[Theorem 3]{cutoff} \eqref{eq: hit} is exploited in order to obtain a characterization of the cutoff phenomenon for reversible Markov chains, in terms of concentration of hitting times of ``worst" (in some sense) sets.

\medskip

The main tool in the proof of \eqref{eq: hit} is Starr's $L_p$ maximal inequality (Theorem \ref{thm: maxergodic}). In other words, an $L_p$ maximal inequality is used to characterize convergence in $L_1$. A look into the proof of \eqref{eq: hit} reveals that it does not require the full strength of Starr's inequality. It is thus natural to try applying Starr's $L_p$ maximal inequality to study stronger notions of convergence. Indeed Theorem \ref{thm: mainintro} can be seen as the $p>1$ counterpart of \eqref{eq: hit}. Also in our analysis the main tool is Starr's inequality.

\section{An overview of our approach}
\label{s: ideas}
\subsection{Lower bounding mixing times using hitting times}
\label{s:ideaslower}
We start with an illustrating example:  if $\Pr_x[T_{A^c}>t]>3\pi(A)/2 $ for some set $A$, then $$[H_t(x,A)-\pi(A) ]/\pi(A) \ge [\Pr_x[T_{A^c}>t]-\pi(A)]/\pi(A) >1/2.$$ Denote $\pi$ conditioned on $A$ by $\pi_A(a):=1_{a \in A}\pi(a)/\pi(A)$. Finally, note that
\[ d_{\infty,x}(t) \ge \max_{a \in A}h_t(x,a)-1 \ge \sum\pi_A(a)(h_t(x,a)-1)=[H_t(x,A)-\pi(A) ]/\pi(A) >1/2.  \]
Hence $\tau_{\infty,x} \ge \min \{t: \Pr_x[T_{A^c}>t] \le 3\pi(A)/2, \text{ for all }A \} $.

\medskip

This generalizes as follows. Let $\PP(\Omega)$ be the collection of all distributions on $\Omega $. Let $A  \subsetneq \Omega $, $x \in \Omega$, $t>0$ and $\delta \in (0,1) $.  Let \[\PP_{A,\delta}:=\{\mu \in \PP(\Omega): \mu(A) \ge \pi(A)+\delta \pi (A^c) \}.\] Clearly, if $\Pr_x[T_{A^c}>t] \ge   \pi(A)+\delta \pi (A^c) $, then $\Pr_x^t \in \PP_{A,\delta} $. Note that \[\nu_{A,\delta}:= \delta \pi_A+(1-\delta)\pi \in \PP_{A,\delta} . \]
Moreover, $\min \{\delta':\nu_{A,\delta'} \in \PP_{A,\delta}   \}=\delta $. It is thus intuitive that for a convex distance function between distributions, $\nu_{A,\delta} $ is the closest distribution to $\pi$ in $\PP_{A,\delta}$.
\begin{proposition}
\label{prop: Lagrange}
Let $(\Omega,P,\pi)$ be some finite irreducible Markov chain. Let $A  \subsetneq \Omega $. Denote $\nu_{A,\delta}:= \delta \pi_A+(1-\delta)\pi$.  Then for all $ \delta \in (0,1)$,
\begin{equation}
\label{eq: Lagrange}
\begin{split}
&  \quad  \min_{\mu \in \PP_{A,\delta}}\| \mu-\pi\|_{2,\pi}=\|\nu_{A,\delta}-\pi\|_{2,\pi}=\delta \sqrt{\pi(A^c)/\pi(A)}. \quad \\ &  \quad \min_{\mu \in \PP_{A,\delta}}D(\mu \| \pi)=D(\nu_{A,\delta} \| \pi)  =u(\pi(A),\delta),
\end{split}
\end{equation}
where $u(x,y):=[ y+x(1-y)]\log (1+\frac{y(1-x)}{x} )+(1-y)(1-x)\log (1-y) $.
\end{proposition}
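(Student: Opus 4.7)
\medskip

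\noindent\textbf{Proof plan for Proposition \ref{prop: Lagrange}.} The plan is to reduce each optimization to a one-dimensional problem by exploiting convexity and the coarse structure of the constraint (which only sees the partition $\{A,A^c\}$), and then to optimize explicitly in the remaining scalar parameter. Both $\mu \mapsto \|\mu-\pi\|_{2,\pi}^{2}$ and $\mu \mapsto D(\mu\|\pi)$ are strictly convex on $\PP(\Omega)$, and the feasible set $\PP_{A,\delta}$ is convex, so minimizers are unique. I will first argue that for any fixed value $p := \mu(A)$, the minimizer of either functional among distributions with $\mu(A)=p$ is
\[
\mu^{(p)} := p\,\pi_A + (1-p)\,\pi_{A^c}.
\]
For the $L_2$ functional, this follows by a conditional-Jensen argument: writing $\|\mu-\pi\|_{2,\pi}^{2}=\sum_{x}\pi(x)(\mu(x)/\pi(x)-1)^{2}$, the map $u \mapsto (u-1)^2$ is convex, so replacing $\mu/\pi$ by its average under $\pi_A$ on $A$ (and under $\pi_{A^c}$ on $A^c$) cannot increase the value and preserves $\mu(A)=p$. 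For the relative entropy, the decomposition
\[
D(\mu\|\pi)= p\log \tfrac{p}{\pi(A)}+(1-p)\log\tfrac{1-p}{\pi(A^c)}+p\,D(\mu_{A}\|\pi_{A})+(1-p)\,D(\mu_{A^c}\|\pi_{A^c}),
\]
(where $\mu_A,\mu_{A^c}$ are the conditional distributions of $\mu$ on $A,A^c$) makes the minimizer at fixed $p$ obviously $\mu^{(p)}$, since the last two terms are nonnegative and vanish exactly at $\mu_A=\pi_A$, $\mu_{A^c}=\pi_{A^c}$.

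\medskip

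Second, a direct computation shows $\mu^{(p)}=\delta'\pi_A+(1-\delta')\pi$ with $\delta'=\tfrac{p-\pi(A)}{\pi(A^c)}$, so that $\mu^{(p)}(A)=\pi(A)+\delta'\pi(A^c)$; hence $\nu_{A,\delta}=\mu^{(p_\delta)}$ for $p_\delta:=\pi(A)+\delta\pi(A^c)$, which lies on the boundary of $\PP_{A,\delta}$. For the $L_2$ case, evaluating the formula above at $\mu=\mu^{(p)}$ gives
\[
\|\mu^{(p)}-\pi\|_{2,\pi}^{2} = \tfrac{(p-\pi(A))^{2}}{\pi(A)\pi(A^c)},
\]
which is increasing in $|p-\pi(A)|$ and hence minimized over $p\ge p_\delta$ at $p=p_\delta$, yielding the claimed value $\delta\sqrt{\pi(A^c)/\pi(A)}$. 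For the entropy case, the first two terms in the decomposition (with $\mu_A=\pi_A,\mu_{A^c}=\pi_{A^c}$) give the scalar function $\varphi(p):=p\log\tfrac{p}{\pi(A)}+(1-p)\log\tfrac{1-p}{\pi(A^c)}$. A direct differentiation gives $\varphi'(p)=\log\tfrac{p}{\pi(A)}-\log\tfrac{1-p}{\pi(A^c)}$, which vanishes at $p=\pi(A)$ and is positive for $p>\pi(A)$; so $\varphi$ is increasing on $[\pi(A),1]$ and the minimum over $p\ge p_\delta$ is attained at $p=p_\delta$. Substituting $p_\delta=\pi(A)+\delta\pi(A^c)$ and $1-p_\delta=(1-\delta)\pi(A^c)$ into $\varphi(p_\delta)$ yields exactly $u(\pi(A),\delta)$ as defined.

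\medskip

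I expect no serious obstacle: the main content is the reduction to the one-dimensional problem, which is the standard ``coarse-graining'' observation that both the chi-squared divergence and the KL divergence satisfy a data-processing identity with equality iff $\mu$ is proportional to $\pi$ within each atom of the partition. The only mild check is the monotonicity of $\varphi$ on $[\pi(A),1]$, which is immediate from the sign of $\varphi'$. The final algebraic identification of $\varphi(p_\delta)$ with $u(\pi(A),\delta)$ is a straightforward substitution.
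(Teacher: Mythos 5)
Your argument is correct and takes a genuinely different route from the paper's, which simply asserts that the first equality in each line ``can be verified using Lagrange multipliers'' and leaves the rest as straightforward. You instead reduce each minimization to a one-dimensional problem by the coarse-graining observation: at fixed $p=\mu(A)$, the chi-squared and KL functionals are minimized by $\mu^{(p)}=p\pi_A+(1-p)\pi_{A^c}$, via conditional Jensen for the $L_2$ case and the chain rule for relative entropy. This buys you two things over the Lagrange-multiplier sketch. First, it is self-contained and does not require separately invoking convexity to upgrade stationarity to global optimality (the Lagrange condition alone only yields a critical point). Second, it makes transparent why the answer is a mixture supported on the two-cell partition $\{A,A^c\}$ — it is exactly the equality case of the data-processing inequality for the $\sigma$-algebra generated by $A$ — whereas the Lagrange route leaves this structure implicit. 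One small point worth stating explicitly when you write this up: $p_\delta=\pi(A)+\delta\pi(A^c)>\pi(A)$ and both reduced scalar objectives $(p-\pi(A))^2/(\pi(A)\pi(A^c))$ and $\varphi(p)$ are strictly increasing on $[\pi(A),1]$, so the minimum over the feasible ray $p\ge p_\delta$ is attained at the boundary $p=p_\delta$; you have the pieces but should say in one sentence why the optimum is at the boundary rather than the interior. With that, the computation $\varphi(p_\delta)=u(\pi(A),\delta)$ is exactly the substitution you indicate.
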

\begin{proof}
The first equality in both lines can be verified using Lagrange multipliers. The second equality in both lines is straightforward.\end{proof}
Proposition \ref{prop: Lagrange} motivates the definitions in \eqref{eq: rhosubx}. We argue that \eqref{eq: Lagrange}, implies the first inequalities in both \eqref{eq: main3intro}-\eqref{eq: main3'intro} by making suitable substitutes for $\delta$ in \eqref{eq: Lagrange}.   For  \eqref{eq: main3intro} substitute $\delta=\frac{1}{2}\sqrt{\frac{\pi(A)}{\pi(A^c)}} $ in the first line of \eqref{eq: Lagrange}. For every  $x \in \Omega$ and $t<\rho_x $ there is some  $A \in \con_{1/2}$ such that \[\Pr_{x}[T_{A^c}>t] > \pi(A)+\frac{1}{2}\sqrt{\pi(A)\pi(A^c)}=\pi(A)+\delta \pi (A^c), \]  where the equality follows by our choice of $\delta$. As mentioned above, this implies that $\Pr_x^t \in  \PP_{A,\delta'}  $ for some $\delta'>\delta$ and so by \eqref{eq: Lagrange} and the choice of $\delta$, we have that  $\| \Pr_x^t -\pi\|_{2,\pi} > 1/2 $. For \eqref{eq: main3'intro}, it is not hard to verify that for some $C',C_{\entt}>0$, we have that $u(x,\min( \frac{C'}{  | \log x| },\frac{\frac{99}{100}-x)}{1-x}) \ge 1/2 $ and $x+\frac{C'}{  | \log x| } (1-x) \le \frac{C_{\entt}}{  | \log x| } $ for all $x \in (0, 1/2] $. Substituting $\delta=\min( \frac{C'}{  | \log x| },\frac{\frac{99}{100}-x)}{1-x})$ in the second line of \eqref{eq: Lagrange} implies the first inequality of \eqref{eq: main3'intro} in a similar manner to the above derivation of the first inequality of \eqref{eq: main3intro}.

\subsection{Upper bounding mixing times using hitting times}
\label{s:ideasupper}

We now explain the idea behind the proof of the upper bound on $\tau_2$ from \eqref{eq: main3intro}. Let $x \in \Omega$. Denote $t:= \rho_x + 8\kappa +6 \rel \log 2 $. By Theorem \ref{thm: kappa=tell} it suffices to bound $d_{2,x}(t)$.
\begin{itemize}

\item[Step 1:]  Show that (Proposition \ref{prop: rhovsbarrho})  $$\forall B \in \con_{1/2} , \quad \Pr_x[T_{B^c}>t] \le \pi(B)^3. $$

\item[Step 2:] Show that (Lemma \ref{lem: L2calculation}) for $A_{s}:=\{y:h_t(x,y) \ge (s+1) \}$ $$ \forall M \ge 1  \quad \|\Pr_x^{t}-\pi \|_{2,\pi}^2 \le M^{2}+ \int_{M}^{\infty}2s \pi(A_{s})ds. $$
$\Longrightarrow$ By Poincar\'e ineq.~\eqref{lem: contraction} it suffices that $s\pi(A_s) \le 2 s^{-3/2} $ for $s \ge M $ (for some $M$).

\item[Step 3:] For~$B_s=\{y:\sup_k H_k(y,A_s) > \frac{s}{2} \pi(A_s) \}$ by step 1 and the Markov property,
\begin{equation}
\label{eq: overview1}
\begin{split}
& s \pi(A_s) \le H_t(x,A_{s})=\Pr_x[T_{B_{s}^{c}}>t,X_{t} \in A_s]+\Pr_x[T_{B_{s}^{c}} \le t,X_{t} \in A_s]
 \\  & \le \Pr_x[T_{B_{s}^{c}}>t]+\sup_{y \notin B_s,k \ge 0}H_k(y,A_s)  \le \pi(B_s)^3+\frac{s}{2} \pi(A_s) .
 \end{split}
 \end{equation}
\item[Step 4:] If  $\pi(B_s) \le s^{-1/2}$, then we are done. Unfortunately, we do not know how to prove this estimate. Hence we have to define the set $B_s$ in a slightly different manner: $B_s:=\{y: \sup_k H_k(y,A_s)> e  \sqrt{s}|\log  \pi(A_{s}) |\pi(A_s)   \}$. By Lemma \ref{lem: surprise} indeed $\pi(B_s) \le s^{-1/2}$. Since $e  \sqrt{s}|\log  \pi(A_{s}) |\pi(A_s) \le  s \pi(A_s)/2    $, unless $\pi(A_s) \le C e^{-\sqrt{s}} $, repeating the reasoning in \eqref{eq: overview1} with the new choice of $B_s$ concludes the proof. \qed
\end{itemize}

The proof of Theorem \ref{thm: kappa=tell} is similar. The general scheme is as follows. Define a relevant family of sets $A_s$. Define $B_s$ to be of the following form $\{y:\sup |g_s(y)|>a_s \}$ with appropriate choices of $g_s$ and $a_s \in \R_+$ so that the desired inequality we wish to establish for $A_s$ holds with some room to spare given that $T_{B_s^c} \le t $  (for an appropriate choice of $t$). Finally, control the error term $\Pr[T_{B_s^c} > t] $ (using the choice of $t$) by controlling $\pi(B_s)$ using an appropriate maximal inequality.
\section{Proofs of the main results}
\subsection{An upper bound on $\tau_2 $}
In this section we prove the following theorem, which refines \eqref{eq: main3intro} from Theorem \ref{thm: mainintro}.

\begin{maintheorem}
\label{thm: main}
For every finite irreducible reversible Markov chain $(\Omega,P,\pi)$  we have that
\begin{equation}
\label{eq: main1}
\forall x, \quad \rho_{x} \le \tau_{2,x}  \le \bar \rho_{x}+5\rel \le \rho_{x}+8 \kappa+(5+6 \log 2 )\rel  .
\end{equation}

The same holds when $x$ is omitted from all of the terms above. Consequently,
\begin{equation}
\label{eq: main3}
 \quad \rho \le \tau_{2}  \le (9 +15 / \log 2) \rho.
\end{equation}
\end{maintheorem}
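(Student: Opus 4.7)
The lower bound $\rho_x \le \tau_{2,x}$ and the rightmost inequality have already been reduced to standard tools: Proposition \ref{prop: Lagrange} (applied with $\delta = \frac{1}{2}\sqrt{\pi(A)/\pi(A^c)}$) gives the former as explained in \S\ref{s:ideaslower}, while Proposition \ref{prop: rhovsbarrho} (which gives $\bar\rho_x \le \rho_x + 8\kappa + 2\rel\log 8$) gives the latter. The consequence \eqref{eq: main3} then follows by combining \eqref{eq: rhobarrho} ($\bar\rho \le 9\rho$), Proposition \ref{prop: comparison} ($\kappa \le 3\rho$), and \eqref{eq: laAla} ($\rel\log 2 \le \kappa$). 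The only substantive work is the middle inequality $\tau_{2,x} \le \bar \rho_x + 5\rel$.

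I will prove it by executing the four-step outline of \S\ref{s:ideasupper} with explicit bookkeeping. Set $t_0 := \bar\rho_x$, so that by definition $\Prr_x[T_{B^c} > t_0] \le \pi(B)^3$ for every $B$ with $\pi(B) \le 1/2$ (using the remark after \eqref{eq: rhosubx} that the connectedness constraint in $\con_{1/2}$ is harmless). The plan is to show $\|\Prr_x^{t_0}-\pi\|_{2,\pi}^2 \le C_0$ for an absolute constant, then apply Fact \ref{lem: contraction} to bring the distance below $1/2$ in additional time $\rel \log(2\sqrt{C_0}) \le 5\rel$. To bound the $L^2$ distance at $t_0$, Step 2 invokes the layer-cake identity
\[
\|\Prr_x^{t_0}-\pi\|_{2,\pi}^2 \le M^2 + \int_M^\infty 2s\,\pi(A_s)\,ds, \qquad A_s := \{y: h_{t_0}(x,y) \ge s+1\},
\]
for a large absolute constant $M$, after which everything reduces to showing that $s\pi(A_s)$ decays at an integrable rate on $[M,\infty)$.

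For each $s \ge M$ I will pair $A_s$ with the maximal-function level set
\[
B_s := \{y : f_{A_s}^*(y) > e\sqrt{s}\,|\log\pi(A_s)|\}, \qquad f_{A_s}(z) := \mathbf{1}_{\{z \in A_s\}}/\pi(A_s),
\]
where $f^*_{A_s}$ is as in Theorem \ref{thm: maxergodic}. Under reversibility $Q=P$ and $S_t=H_t$, so $H_k(y,A_s)/\pi(A_s) = H_k f_{A_s}(y) \le f_{A_s}^*(y)$ for every $k$, forcing $\sup_{y\notin B_s,k\ge 0} H_k(y,A_s) \le e\sqrt{s}\,|\log\pi(A_s)|\,\pi(A_s)$. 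Lemma \ref{lem: surprise} gives $\|f_{A_s}^*\|_1 \le e|\log\pi(A_s)|$, so Markov's inequality yields $\pi(B_s) \le s^{-1/2}$ (in particular $\pi(B_s) \le 1/2$ for $s\ge 4$). Splitting $H_{t_0}(x,A_s)$ on whether the chain has left $B_s$ by time $t_0$ and invoking the strong Markov property gives
\[
(s+1)\pi(A_s) \le H_{t_0}(x,A_s) \le \Prr_x[T_{B_s^c} > t_0] + e\sqrt{s}\,|\log\pi(A_s)|\,\pi(A_s) \le \pi(B_s)^3 + e\sqrt{s}\,|\log\pi(A_s)|\,\pi(A_s).
\]
The Step 4 dichotomy then finishes the argument: if $e\sqrt{s}|\log\pi(A_s)| \le s/2$ then $s\pi(A_s) \le 2\pi(B_s)^3 \le 2 s^{-3/2}$; otherwise $\pi(A_s) \le e^{-\sqrt{s}/(2e)}$ and $s\pi(A_s)$ is super-exponentially small. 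Either way the tail integral is bounded by an absolute constant.

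The main technical obstacle is precisely the one flagged in the overview: the naive choice $B_s = \{y: \sup_k H_k(y,A_s) > \tfrac{s}{2}\pi(A_s)\}$ leaves $\pi(B_s)$ uncontrolled, so Starr's maximal inequality (in the form of Lemma \ref{lem: surprise}) is needed to break the circularity, inevitably introducing the $|\log\pi(A_s)|$ factor that forces the dichotomy. This same factor is why the argument needs the cubic escape estimate $\pi(B)^3$ rather than a merely quadratic one, which is in turn the reason one must work with $\bar\rho_x$ rather than with $\rho_x$ directly; Proposition \ref{prop: rhovsbarrho} then absorbs the upgrade $\rho_x \rightsquigarrow \bar\rho_x$ into the additive correction $8\kappa$ in the final inequality.
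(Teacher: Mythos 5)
Your proof follows the paper's scheme almost line by line: the layer--cake bound of Lemma~\ref{lem: L2calculation}, the Starr/Markov bound on $\pi(B_s)$ via Lemma~\ref{lem: surprise}, the escape/no--escape dichotomy, and Proposition~\ref{prop: rhovsbarrho} to pass from $\bar\rho_x$ to $\rho_x$. What is missing is the bookkeeping that actually produces the constant $5\rel$, and as stated your choice of threshold does not give it. You set
$B_s := \{y: f_{A_s}^*(y) > e\sqrt{s}\,|\log\pi(A_s)|\}$
(coefficient $e$), whereas the paper uses coefficient $2e^{-1}$ (and $\sqrt{s+1}$). Your larger coefficient makes $\pi(B_s)\le s^{-1/2}$ a bit better, but it weakens the ``otherwise'' branch of the dichotomy to $\pi(A_s)\le e^{-\sqrt{s}/(2e)}$, exponent $\tfrac{1}{2e}\approx 0.18$, compared to the paper's $\pi(A_s)\le e^{-e\sqrt{s+1}/4}$, exponent $\tfrac{e}{4}\approx 0.68$. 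With the weaker exponent the integral
$\int_M^{\infty}2s\pi(A_s)\,ds$
stays on the order of $2\cdot 10^{4}$ for every admissible choice of $M$ (for small $M$ the exponential term is $\approx\int_0^\infty 2se^{-\sqrt s/(2e)}ds=384e^4\approx 2\cdot 10^4$; for large $M$ the $M^2$ term alone exceeds $e^{10}/4$). So $C_0 := \|\Prr_x^{t_0}-\pi\|_{2,\pi}^2$ is not bounded by $e^{10}/4\approx 5.5\cdot 10^3$, and the step ``$\rel\log(2\sqrt{C_0})\le 5\rel$'' fails. Your sentence ``Either way the tail integral is bounded by an absolute constant'' is true but does not establish the bound you actually need, and consequently the explicit coefficient $(9+15/\log 2)$ in \eqref{eq: main3} (which is derived from the middle inequality $\tau_{2,x}\le\bar\rho_x+5\rel$ together with $\bar\rho\le 9\rho$, $\kappa\le 3\rho$, $\rel\le\kappa/\log 2$) is not reached.

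The remedy is exactly what the paper does: pick a smaller threshold constant in $B_s$ (the paper's $2e^{-1}\sqrt{s+1}$), accept the resulting weaker $\pi(B_s)\le e^2/(2\sqrt{s+1})$ in the polynomial branch, harvest the much stronger exponent $e/4$ in the exponential branch, and then verify numerically that $\int_{e^4}^\infty 2s\pi(A_s)\,ds\le e^7\le e^{10}/4-e^8$. The rest of your argument (the use of the remark after \eqref{eq: rhosubx} to drop connectedness, the lower bound via Proposition~\ref{prop: Lagrange}, the rightmost inequality via Proposition~\ref{prop: rhovsbarrho}) is correct and coincides with the paper's.
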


\begin{lemma}
\label{lem: L2calculation} Let $A_{x,t}(s):=\{y:h_t(x,y) \ge s+1 \}$. For every finite irreducible reversible chain, for all  $x \in \Omega$  and $\ell \ge 1$
$$\forall t \ge 0 ,\,  \quad \|\Pr_x^{t}-\pi \|_{2,\pi}^2 \le \ell^2 + \int_{\ell}^{\infty}2s^{} \pi(A_{x,t}(s))ds.$$
\end{lemma}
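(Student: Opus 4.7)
The plan is to prove this by a direct layer-cake (distribution function) argument applied to the squared density $g(y)^2$, where $g(y):=h_t(x,y)-1$. First I would write
\[
\|\Pr_x^t-\pi\|_{2,\pi}^2 \;=\; \mathbb{E}_\pi[(h_t(x,\cdot)-1)^2] \;=\; \mathbb{E}_\pi[g^2],
\]
using the definition of the $L_2(\pi)$ norm. Then, since $g^2$ is non-negative, I can apply the identity $\mathbb{E}[Y^2]=\int_0^\infty 2s\,\Pr(|Y|\ge s)\,ds$ (obtained by Fubini or by the substitution $u=s^2$ in the tail formula $\mathbb{E}[Y^2]=\int_0^\infty \pi(Y^2\ge u)\,du$) to write
\[
\mathbb{E}_\pi[g^2] \;=\; \int_0^\infty 2s\,\pi\bigl(\{y:|g(y)|\ge s\}\bigr)\,ds.
\]

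Next I would split this integral at $s=\ell$. On the interval $[0,\ell]$ I use the trivial bound $\pi(\{|g|\ge s\})\le 1$, which gives $\int_0^\ell 2s\,ds=\ell^2$. On the interval $[\ell,\infty)$ I would use the crucial observation that since the continuous-time heat kernel satisfies $h_t(x,y)\ge 0$, we have $g(y)=h_t(x,y)-1\ge -1$ for all $y$; hence for any $s>1$, the event $g(y)\le -s$ is empty, so
\[
\{y:|g(y)|\ge s\} \;=\; \{y:g(y)\ge s\} \;=\; \{y:h_t(x,y)\ge s+1\} \;=\; A_{x,t}(s).
\]
Since $\ell\ge 1$, this identity is valid for (Lebesgue-)almost every $s\in[\ell,\infty)$, so
\[
\int_\ell^\infty 2s\,\pi(\{|g|\ge s\})\,ds \;=\; \int_\ell^\infty 2s\,\pi(A_{x,t}(s))\,ds,
\]
and combining the two pieces yields the stated inequality.

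This proof is essentially elementary, so there is no genuine obstacle; the only mildly subtle point is the need for the cutoff $\ell\ge 1$, which is dictated precisely by the fact that the lower tail $\{g\le -s\}$ is non-trivial only for $s\le 1$. The hypothesis $\ell\ge 1$ in the lemma statement is therefore exactly what is needed to identify $\{|g|\ge s\}$ with the super-level set $A_{x,t}(s)$ throughout the range of integration, making the tail bound a genuine upper bound on the full $L_2$-distance rather than only on its positive part.
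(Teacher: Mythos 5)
Your proof is correct and takes essentially the same layer-cake (distribution-function) approach as the paper; the only cosmetic difference is that you split the integral $\int_0^\infty = \int_0^\ell + \int_\ell^\infty$ directly, whereas the paper first truncates via $f^2 \le f^2\mathbf{1}_{\{f>\ell\}} + \ell^2\mathbf{1}_{\{f\le\ell\}}$ and then applies the tail formula to the truncated part, which amounts to the same estimate. Your remark that $g\ge -1$ forces $\{|g|\ge s\}=A_{x,t}(s)$ for $s>1$ is exactly the observation the paper records as ``for all $s>1$, $\{f\ge s\}=A_{x,t}(s)$.''
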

{\em Proof:}  Fix some $x \in \Omega$, $t \ge 0$ and $\ell \ge1$. Let $f(y):=|h_t(x,y)-1|$. Then $\|\Pr_x^{t}-\pi \|_{2,\pi}^2=\|f\|_2^{2}=\mathbb{E}_\pi[f^2]$. Note that for all $s > 1$, $\{f \ge s \}=A_{x,t}(s)$. Observe that $$\mathbb{E}_\pi[f^21_{f > \ell}]=\int_{0}^{\infty}2s \pi(\{f1_{f>\ell}>s \})ds \le \pi(f>\ell)\ell^2  + \int_{\ell}^{\infty}2s \pi(A_{x,t}(s))ds.$$
Finally, since $f^2\le f^21_{f > \ell}+1_{f \le \ell}\ell^2 $, we get that
$$\mathbb{E}_\pi[f^2] \le \pi(f \le  \ell)\ell^2   +\mathbb{E}_\pi[f^21_{f > \ell}] \le \ell^2+\int_{\ell}^{\infty}2s \pi(A_{x,t}(s))ds. \quad \qed$$

\emph{Proof of Theorem \ref{thm: main}:} We first note that \eqref{eq: main3} follows from \eqref{eq: main1} in conjunction with \eqref{eq: com1} and \eqref{eq: laAla}. We now prove \eqref{eq: main1}. Let $x \in \Omega$. The inequality $\rho_{x} \le \tau_{2,x}$ follows from \eqref{eq: Lagrange} and the discussion following it.   Set $t:=\bar \rho_x $. As above, denote $A_{s}:=\{y:h_t(x,y) \ge s+1 \}$. By the Poincar\'e inequality (Fact \ref{lem: contraction}) and Lemma \ref{lem: L2calculation} it suffices to show that  $$ \int_{e^{4}}^{\infty}2s \pi(A_{s})ds \le e^{7} \le e^{10}/4-e^{8}. $$

Let $g_s(y):=\sup_k H_k(y,A_{s})/\pi(A_{s})$. By Lemma \ref{lem: surprise} $\|g_{s}\|_1 \le e|\log  \pi(A_{s})|  $.  Let  $$B_s:=\{y: g_s(y) >2e^{-1}  \sqrt{s+1}|\log  \pi(A_{s}) |  \}=\{y: \sup_k H_k(y,A_{s})\ge2e^{-1}  \sqrt{s+1}\pi(A_{s})|\log  \pi(A_{s}) |  \}.$$ Let $s \ge e^{4}$. By Markov inequality $\pi(B_s) \le e^{2}/(2\sqrt{s+1} )\le \frac{1}{2} $ and so by the definition of $\bar \rho_x$    $$\Pr_x[T_{B_s^{c}}>t,X_{t} \in A_s] \le \Pr_x[T_{B_s^{c}}>t] \le  \frac{e^{6}}{8(s+1)^{3/2}  }. $$
 Also, by the definition of $B_s$ we clearly have that
$$\Pr_x[T_{B_s^{c}} \le t,X_{t} \in A_s] \le \sup_{b \notin B_s,k \ge 0}H_k(b,A_s) \le 2e^{-1}  \sqrt{s+1}\pi(A_s)|\log  \pi(A_{s}) | .  $$
Since by the definition of $A_s$ (first inequality) $$(s+1) \pi(A_s) \le H_t(x,A_{s})=\Pr_x[T_{B_s^c}>t,X_{t} \in A_s]+\Pr_x[T_{B_s^c} \le t,X_{t} \in A_s],$$ we get that if $\Pr_x[T_{B_s^{c}}>t,X_{t} \in A_s] \le \Pr_x[T_{B_s^{c}} \le t,X_{t} \in A_s]$, then $$(s+1)\pi(A_s) \le 4e^{-1} \sqrt{s+1}\pi(A_s)^{}|\log  \pi(A_{s}) | ,$$
 which simplifies as follows
$$2s\pi(A_s) \le 2se^{-e\sqrt{s+1}/4 } . $$
while if $\Pr_x[T_{B_s^{c}}>t,X_{t} \in A_s] > \Pr_x[T_{B_s^{c}} \le t,X_{t} \in A_s]$, then we have that $$2s \pi(A_s) < 4\Pr_x[T_{B_s^{c}} > t,X_{t} \in A_s] \le \frac{e^{6}}{2(s+1)^{3/2}   }. $$ Let $f(s)=15(s+1)^{3/2}e^{-e\sqrt{s+1}/4 } $. Then $f(e^4)< e^{6} $ and for $s \ge e^{4} $ we have that $ \frac{d}{ds}(-f(s)) \ge( \frac{15e}{8}-\frac{45}{2e^2})(s+1)e^{-e\sqrt{s+1}/4 } \ge 2se^{-e\sqrt{s+1}/4 }   $. Hence indeed
$$ \int_{e^{4}}^{\infty} 2s \pi(A_s)ds \le \int_{e^{4}}^{\infty} \max (2se^{-\sqrt{s+1}/(2e) } , \frac{e^{6}}{2(s+1)^{3/2}   } ) ds \le f(e^4)+ \frac{e^{6}}{(e^{4}+1)^{\frac{1}{2}} } \le e^{7}. \qed  $$

\subsection{A hitting times characterization of mixing in relative entropy}
Recall the definitions of $\rho_{\entt} , \bar \rho_{\entt}, \rho_{\entt,x}$  and $\bar \rho_{\entt,x}$ from \eqref{eq: rhorhoent} and \eqref{eq: barrhox}. Recall that by \eqref{eq: rhobarrho}, $c\rho_{\entt} \le \bar \rho_{\entt} \le C \rho_{\entt} $.
The following theorem refines \eqref{eq: main3'intro} from Theorem \ref{thm: mainintro}.
\begin{maintheorem}
\label{thm: main3}
Let $(\Omega,P,\pi)$ be a finite irreducible reversible Markov chain. Then
\begin{equation}
\label{eq: main31}
\forall x, \quad \rho_{x,\entt}\le \tau_{\entt,x}  \le \bar \rho_{x,\entt}+14\rel .
\end{equation}
The same holds when $x$ is omitted from all of the terms above. Consequently
\begin{equation}
\label{eq: main33}
 \rho_{\entt}  \le \tau_{\entt} \le C_{1} \rho_{\entt} .
\end{equation}
\end{maintheorem}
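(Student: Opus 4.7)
The plan is to parallel the proof of Theorem \ref{thm: main} with relative entropy in place of the squared $L^2$ distance. The lower bound $\rho_{\entt, x} \le \tau_{\entt, x}$ will follow directly from Proposition \ref{prop: Lagrange} and the discussion after \eqref{eq: Lagrange}: for any $t < \rho_{\entt, x}$ there is some $A \in \con_{1/2}$ with $\Pr_x[T_{A^c} > t]$ strictly larger than $\min(C_\entt/|\log \pi(A)|, 99/100)$, hence $\Pr_x^t \in \PP_{A, \delta}$ for some $\delta > \min(C'/|\log \pi(A)|, (99/100 - \pi(A))/(1 - \pi(A)))$, and the choice of $C_\entt$ fixed in \S~\ref{s:ideaslower} guarantees $D(\Pr_x^t \| \pi) \ge u(\pi(A), \delta) > 1/2$.

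For the upper bound $\tau_{\entt, x} \le \bar\rho_{\entt, x} + 14 \rel$, I would set $t := \bar\rho_{\entt, x} + 14 \rel$ and expand the relative entropy via a layer-cake identity analogous to Lemma \ref{lem: L2calculation}:
\begin{equation*}
\mathrm{Ent}(h_t(x, \cdot)) = \int_0^\infty (1 + \log s)\, \pi(A_s)\, ds, \qquad A_s := \{y : h_t(x, y) > s\},
\end{equation*}
whose integrand is non-positive on $(0, 1/e)$. Following the scheme in \S~\ref{s:ideasupper}, for each $s$ past a fixed threshold I would introduce
\begin{equation*}
B_s := \bigl\{y : \sup_k H_k(y, A_s)/\pi(A_s) > a_s\bigr\}, \qquad a_s := 2 e^{-1}\sqrt{s}\, |\log \pi(A_s)|,
\end{equation*}
exactly as in the $L^2$ proof. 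By Lemma \ref{lem: surprise}, $\pi(B_s) \le e^2/(2\sqrt{s}) \le 1/2$, and the definition of $\bar\rho_{\entt, x}$ will give $\Pr_x[T_{B_s^c} > \bar\rho_{\entt, x}] \le 1/(16 e^2 (\log(e^{3/2}/\pi(B_s)))^3) \le C/(\log s)^3$. Substituting into $s\pi(A_s) \le H_t(x, A_s) \le \Pr_x[T_{B_s^c} > t] + a_s \pi(A_s)$ will yield either $\pi(A_s) \le C'/(s(\log s)^3)$ or the exponentially-decaying bound $\pi(A_s) \le e^{-c\sqrt{s}}$, so the tail $\int_\ell^\infty (1+\log s)\pi(A_s)\, ds$ will be bounded by a small constant.

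The principal technical obstacle will be making the head of the integral small enough for the total entropy to be at most $1/2$, since the Markov bound $\pi(A_s) \le 1/s$ alone only gives an $O((\log \ell)^2)$ contribution. Here the extra $14 \rel$ is to play a role analogous to the closing Poincar\'e contraction in the $L^2$ proof of Theorem \ref{thm: main}: by the strong Markov property together with \eqref{eq: stationary<quasi} and the spectral estimate $\la(B) \ge \la/2$ for $\pi(B) \le 1/2$ (cf.\ \eqref{eq: laAla}), one has $\max_{y \in B}\Pr_y[T_{B^c} > 14 \rel] \le e^{-\la(B) \cdot 14\rel} \le e^{-7}$, and hence
\begin{equation*}
\Pr_x[T_{B_s^c} > \bar\rho_{\entt, x} + 14 \rel] \le e^{-7}\, \Pr_x[T_{B_s^c} > \bar\rho_{\entt, x}].
\end{equation*}
This uniform contractive improvement over the $\bar\rho_{\entt, x}$ hitting-time bound is what I expect to drive the total entropy integral (head plus tail) below $1/2$.

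Finally, the consequence \eqref{eq: main33} will follow by taking maxima over $x$, absorbing $\bar\rho_{\entt}$ into $O(\rho_{\entt})$ via \eqref{eq: rhobarrho}, and observing that $\rel \le C\rho_{\entt}$: applied at $t = \rho_{\entt}$ with any $A \in \con_{1/2}$, the quasi-stationary identity \eqref{eq: stationary<quasi} forces $\rho_{\entt} \ge \rel(A) \log(|\log \pi(A)|/C_\entt)$, and choosing $A$ via \eqref{eq: laAla} so that $\rel(A) \ge \rel$ (together with a sufficiently small $C_\entt$) will give $\rho_{\entt} \ge \rel$.
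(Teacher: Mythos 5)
Your lower bound and the final deduction of \eqref{eq: main33} (via \eqref{eq: rhobarrho} and $\rel\le C\rho_\entt$) are both fine, and the tail estimate $\int_\ell^\infty(1+\log s)\pi(A_s)\,ds$ would go through as in Theorem \ref{thm: main}. The gap is in the head, and the fix you propose for it does not work.

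First, the claimed inequality $\max_{y\in B}\Pr_y[T_{B^c}>14\rel]\le e^{-\la(B)\cdot 14\rel}$ is false; in fact the reverse holds. Equation \eqref{eq: stationary<quasi} gives $\Pr_{\mu_B}[T_{B^c}>s]=e^{-\la(B)s}$ (and $\le$ for the start $\pi_B$), so $\max_{y\in B}\Pr_y[T_{B^c}>s]\ge e^{-\la(B)s}$: the quasi-stationary distribution is a convex combination of point masses, and the maximum over $y$ dominates it. For a $y$ deep inside $B$ the survival probability at time $14\rel$ may be close to $1$. So the supposed multiplicative contraction $\Pr_x[T_{B_s^c}>\bar\rho_{\entt,x}+14\rel]\le e^{-7}\Pr_x[T_{B_s^c}>\bar\rho_{\entt,x}]$ has no justification.

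Second, even with some replacement contraction, the head genuinely fails because you are applying the layer-cake to $A_s=\{h_t(x,\cdot)>s\}$ for the \emph{full} density. For $s$ of order one, $\pi(A_s)$ can be of order one, so the corresponding $B_s$ has $\pi(B_s)>1/2$ and the hitting-time bound defining $\bar\rho_{\entt,x}$ is vacuous; the Markov bound $\pi(A_s)\le 1/s$ already contributes $\Theta((\log\ell)^2)$ over $[1,\ell]$, and nothing in your scheme beats that.

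The missing idea is the decomposition the paper performs at the outset. Write $r:=\bar\rho_{x,\entt}$, $D:=\{y:h_r(x,y)>e^{10}\}$, and split $\Pr_x^r=\delta\mu+(1-\delta)\nu$ where $\nu$ truncates the density at level $e^{10}$ and $\mu$ carries the excess. By convexity of $D(\cdot\|\pi)$, $D(\Pr_x^{r+r'}\|\pi)\le\delta D(\mu_{r'}\|\pi)+(1-\delta)D(\nu_{r'}\|\pi)$ with $r'=14\rel$. The truncated part satisfies $\|\nu-\pi\|_{\infty,\pi}\le (1-\delta)^{-1}e^{10}$, so the Poincar\'e inequality \eqref{eq: L2contraction} over $r'=14\rel$ forces $\|\nu_{r'}-\pi\|_{2,\pi}\le (1-\delta)^{-1}e^{-4}$ and then \eqref{eq: su} bounds its entropy — this, not a hitting-time contraction, is what the $14\rel$ buys. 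For the excess part one writes $\delta D(\mu\|\pi)=\delta|\log\delta|+\mathbb{E}_\pi[g\log g]$ with $g=(h_r(x,\cdot)-e^{10})_+$, and only now applies the layer-cake $\mathbb{E}_\pi[g\log g]\le\int_1^\infty(1+\log s)\pi(\{g\ge s\})\,ds$. Because $g$ is supported on $D$, every $A_s=\{g\ge s\}$ (for $s\ge 1$) already has $\pi(A_s)\le 1/(s+e^{10})\le e^{-10}$, so the auxiliary $B_s$ satisfies $\pi(B_s)\le e/\sqrt{s+e^{10}}\le 1/2$ from $s=1$ onward and the hitting-time condition controls the whole range — the head problem disappears. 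This splitting into an $L^\infty$-bounded part (controlled by relaxation) and a small-mass excess (controlled by hitting times) is the genuinely different and necessary step that your proposal does not contain.
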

\emph{Proof of Theorem \ref{thm: main3}:} Let $x \in \Omega$.  The inequality $\rho_{x,\entt}\le \tau_{\entt,x}   $ follows from \eqref{eq: Lagrange} and the discussion following \eqref{eq: Lagrange}. The inequality $\tau_{\entt} \le C_{1} \rho_{\entt}$ follows from \eqref{eq: main31} and \eqref{eq: rhobarrho}, in conjunction with the fact that (under reversibility) $c \rel \le \rho_{\entt}$ for some absolute constant $c>0$ (c.f.~\cite[(3.19)]{cutoff} for the fact that there exist some $A \in \con_{1/2}$ and $a \in A$ so that $\Pr_{a}[T_{A^{c}}>\epsilon \rel ] \ge e^{-\epsilon} \ge 1-\epsilon $, for all $\epsilon \ge 0$).
We now prove that $\tau_{\entt,x} \le \bar \rho_{x,\entt} + 14 \rel $. Denote $r:= \bar \rho_{x,\entt} $,  $r':=14 \rel$.  Let $$D:=\{y:h_r(x,y)>e^{10} \}.$$ Denote $\delta:=H_r(x,D)-e^{10} \pi(D) $, $$\mu(y):=\delta^{-1}1_{y \in D}[H_{r}(x,y)-e^{10} \pi(y)],$$
$$\nu(y):=(1-\delta)^{-1}[1_{y \notin D}H_{r}(x,y)+1_{y \in D}e^{10} \pi(y)]. $$
Denote $\mu_\ell:= \mu H_{\ell} $ and  $\nu_\ell:= \nu H_{\ell} $.
Then $\Pr_x^{r+r'}= \delta \mu_{r'}+(1-\delta)\nu_{r'}$ and so by convexity (which holds for $D(\cdot \| \pi)$ by Jensen's inequality applied to each $y$ separately) and \eqref{eq: su}
\begin{equation}
\label{eq: triangleineq}
D(\Pr_x^{r+r'} || \pi) \le \delta D(\mu_{r'} || \pi)+(1-\delta) D(\nu_{r'} || \pi) \le \delta D(\mu_{r'} || \pi)+(1-\delta)\log (1+\|\nu_{r'}-\pi\|_{2,\pi}^2). \end{equation}
By \eqref{eq: L2contraction} $$\|\nu_{r'}-\pi\|_{2,\pi} \le \|\nu-\pi\|_{2,\pi}e^{-14} \le \|\nu-\pi\|_{\infty,\pi}e^{-14}   \le (1-\delta)^{-1}e^{-4}. $$ Using $\sqrt{1+a} \le 1+\sqrt{a}$ and $\log (1+a) \le a$ we get that \[ (1-\delta)\log (1+\|\nu_{r'}-\pi\|_{2,\pi}^2) \le 2(1-\delta)\log \left(1+\|\nu_{r'}-\pi\|_{2,\pi}\right) \le2  e^{-4}. \]
By \eqref{eq: triangleineq} to conclude the proof it is left to show that $ \delta D(\mu_{r'} || \pi) \le 1/2 - 2  e^{-4} $. Denote $$a_{y}:=1_{y \in D}[H_{r}(x,y)-e^{10}\pi(y)], \quad g(y)=a_{y}/\pi(y). $$  $$\delta D(\mu || \pi)=\sum a_{y}\log (g(y)/\delta )= \delta |\log \delta|+\mathbb{E}_{\pi}[g \log g ].  $$

Since $\delta |\log \delta| \le 1/e$, for all $\delta \in [0,1] $,  in order to show that  $ \delta D(\mu_{r'} || \pi) \le 1/2 - 2 e^{-4} $ \begin{equation}
\label{eq: suffices}
 \text{it suffices to show that} \quad \mathbb{E}_{\pi}[g \log g ] \le 1/10 < 1/2 - 1/e -2e^{-4} .
\end{equation}
Similarly to the proof of Theorem \ref{thm: main}, let  $$A_s=\{y:g(y) \ge s \} \text{ and } B_s:=\{y: \sup_{\ell} H_\ell (y,A_s) >  \sqrt{s+e^{10}}\pi(A_s)|\log \pi (A_s)| \} .$$ Then
\begin{equation}
\label{eq: glogg}
 \mathbb{E}_{\pi}[g \log g ] \le \int_{0}^\infty \pi(\{y:g(y)\log g(y)>s\})ds = \int_{1}^\infty (1+\log s)\pi(A_{s})ds.
\end{equation}
Note that $(e^{10}+s)\pi(y) \le H_r(x,y) $ for every $y \in A_s$. Hence as in the proof of Theorem \ref{thm: main}
\begin{equation}
\label{eq: Bsent'}
(e^{10}+s)\pi(A_{s})\le H_r(x,A_s) \le \Pr_{x}[T_{B_s^c}>r]+\mathbb{E}_x[X_r \in A_s \mid T_{B_s^c}\le r  ].
\end{equation}
By the definition of $B_s$ and the Markov property,
\begin{equation}
\label{eq: Bsent}
\mathbb{E}_x[X_r \in A_s \mid T_{B_s^c}\le r  ]\le \sup_{y \notin B_s,\ell \ge 0}H_{\ell}(y,A_s)  \le \sqrt{s+e^{10}}\pi(A_s)|\log \pi (A_s)|.
\end{equation}
 By Lemma \ref{lem: surprise} $\pi(B_s) \le e/\sqrt{s+e^{10}} \le 1/2 $ and hence by the definition of $r$, $$\Pr_{x}[T_{B_s^c}>r]\le \frac{1}{16e^{2}(\frac{1}{2}(\log (s+e^{10})+1))^{3}}=\frac{1}{2e^{2}(1+\log (s+e^{10}))^{3}}. $$
As in  the proof of Theorem \ref{thm: main}, it follows that for all $s \ge 1$, $(s+e^{10})\pi(A_{s}) \le  \frac{2}{2e^{2}(1+\log (s+e^{10}))^{3}} $, as otherwise by \eqref{eq: Bsent'} $(s+e^{10})\pi(A_{s})<2\mathbb{E}_x[X_r \in A_s \mid T_{B_s^c}\le r  ] $, which by \eqref{eq: Bsent} implies that \[\pi(A_s) \le \exp (-\frac{1}{2} \sqrt{s+e^{10}})\le \exp (- \sqrt{s/8}-\sqrt{e^{10}/8})< e^{-50- \sqrt{s/8}} < \frac{(s+e^{10})^{-1}}{e^{2}(1+\log (s+e^{10}))^{3}} ,\] a contradiction. Thus for all $s \ge 1$,
$$(1+\log s)\pi(A_{s}) \le \frac{1}{e^{2}(s+e^{10})(1+\log (s+e^{10}))^{2}}, $$
which yields that $\int_{1}^\infty (1+\log s)\pi(A_{s})ds \le  \int_{1+e^{10}}^\infty \frac{e^{-2}ds}{s (1+\log s)^2} =\frac{e^{-2}}{ 1+\log (1+e^{10})} < e^{-2}/11  $. This concludes the proof using \eqref{eq: suffices} and \eqref{eq: glogg}. \qed

\subsection{Proof of Theorem \ref{thm: kappa=tell}}

\emph{Proof of Theorem \ref{thm: kappa=tell}:} As mentioned in the introduction, it is known that  $ \kappa \le t_{\mathrm{LS}}$. Denote $r:=\frac{1}{2} \kappa $. Note that $P$ and $Q=(P+P^{*})/2$ have the same $\rel$ and $t_{\mathrm{LS}}$. Thus we may work with $S_t=e^{-t(I-Q)}$ instead of $H_t$. By \eqref{eq: hyper} it suffices to show that  $\|S_{r } \|_{2 \to 4 } \le 7$.  Fix some $f \in \R^{\Omega} $ such that $\|f \|_2=1$. Our goal is to show that $\|S_{r }f \|_4 \le 7 $. By considering $|f|$ instead of $f$ we may assume that $f \ge 0$.  Let \[A_{s}:=\{x: S_{r  }f(x) \ge s \}. \] Then $\|S_{r }f \|_4^4 =\int_{0}^{\infty}4s^3\pi(A_s)ds\le 6^4 +\int_{6}^{\infty}4s^3\pi(A_s)ds $. Hence to conclude the proof
\begin{equation}
\label{eq: sufficeshyper}
\text{it suffices to show that} \quad \int_{6}^{\infty}4s^3\pi(A_s)ds \le 256 \le 7^4 - 6^4 .
\end{equation}

Recall that $S_tf(x)=\mathbb{E}_x[f(Y_t)]$ and that for all $A \subset \Omega$,  $S_t^{A}f(a)=\mathbb{E}_a[f(Y_t)1_{T_{A^{c}}>t}]$.  Let
$$B_s:=\{x: \sup_{t} S_{t }f(x) > s/2 \}=\{f^* > s/2 \}, \quad \text{where} \quad f^*(x)=\sup_{t} S_{t }f(x)   $$
$$D_s:=\{x \in B_s : \mathbb{E}_x[f(Y_{r  })1_{T_{B_s^{c}}>r }]  \ge s/2 \},\quad  F_{s}:=\{x\in B_s  : \mathbb{E}_x[f^{2}(Y_{r  })1_{T_{B_s^{c}}>r }]  \ge s^{2}/4 \}.$$
 By the Markov property (first inclusion), $A_s \subset D_s \subset F_s $ (the second inclusion follows by the Cauchy-Schwarz inequality). Thus $\pi(A_s) \le \pi(F_s)$. Hence, by \eqref{eq: sufficeshyper} in order to conclude the proof it suffices to show that $\int_{6}^{\infty}4s^3\pi(F_s)ds \le 256 $. By Starr's maximal  inequality \eqref{eq: ergodic1} we know that $\int_{0}^{\infty}16s\pi(B_s)ds=64\|f^*\|_2^2 \le 256\|f\|_2^2=256 $. Thus in order to show that $\int_{6}^{\infty}4s^3\pi(F_s)ds \le 256 $, and conclude the proof, it suffices to show that for all $s \ge 6$ we have that $\pi(F_s) \le  4s^{-2}\pi(B_s)$.

\medskip

Fix some $s \ge 6 $.
Note that since $\|f^*\|_2^2 \le 4 $, by Markov inequality we have that $\pi(B_s) \le 16/s^2 < 1/2 $. Using the spectral decomposition of the restriction of $f $ to $B_s$ (c.f.~\cite[Lemma 3.8]{cutoff}) and the choice of $r$
$$\mathbb{E}_{\pi_{B_{s}}}[f^{2}(Y_{r  })1_{T_{B_s^{c}}>r }] \le \mathbb{E}_{\pi_{B_{s}}}[f^{2}(Y_{0  })] e^{-2 \lambda(B_{s})r } \le (\|f\|_2^2/\pi(B_s))  e^{-2 \lambda(B_{s})r } = (1/\pi(B_s)) \times \pi(B_s)=1 . $$
Thus by the def.~of $F_s$, $\frac{1}{4}s^{2}\pi_{B_{s}}(F_s) \le \sum_{y \in F_s}\pi_{B_{s}}(y)\mathbb{E}_{y}[f^{2}(Y_{r  })1_{T_{B_s^{c}}>r }]  \le  \mathbb{E}_{\pi_{B_{s}}}[f^{2}(Y_{r  })1_{T_{B_s^{c}}>r }]\le 1$ and so indeed  $\pi(F_s) \le  4s^{-2}\pi(B_s)$. \qed

\subsection{The necessary adaptations in the proofs of the results concerning the discrete-time and averaged chains}
\label{s:disc}
The proofs of the lower bounds $\rho_{{\mathrm{discete}}} \le \min( \tau_2, \tau_2^{\mathrm{ave}}) $ and $\rho_{\entt}^{{\mathrm{discete}}} \le \min( \tau_{\entt}, \tau_{\entt}^{\mathrm{ave}}) $ in Theorem \ref{thm: aveL2}  are identical to those from Theorem \ref{thm: mainintro} (namely, these are ``naive" bounds that can be proven using the same argument as in \S~\ref{s:ideaslower}). The inequalities $ \rel^{\mathrm{absolute}} \log 2 \le \tau_{2}^{\mathrm{discete}} $ and  $c\rel^{\mathrm{absolute}}\le \tau_{\entt}^{\mathrm{discrete}}   $  from  \eqref{eq: discL2} and \eqref{eq: discent} follow from \eqref{eq: lowerL1rel}-\eqref{eq: lowerRErel}, resp.. The proofs of the upper bounds require the following minor adaptations:
\begin{itemize}
\item[(i)] In the definition of the sets $A_s$ (both in the proof of the $L_2$ case and of the relative-entropy case) one needs to replace $h_t(x,y) $ with $k_t(x,y)=P^{t}(x,y)/\pi(y)$.
\item[(ii)] In the applications of Starr's inequality one has to work with the discrete-time version, and thus pick up a multiplicative factor of 2 (which is a  non-issue). Namely, when applying Lemma \ref{lem: surprise} one has to use the estimate $\|(f_{A})_{*}\|_{1}\le 2e \max (1 , |\log \pi(A)| )$, instead of $   \|f_{A}^* \|_1 \le e \max (1 , |\log \pi(A)| )$.

\item[(iii)] One has to replace the Poincar\'e inequality with the discrete and averaged analogs, to be described shortly (Lemma \ref{lem: discpoincare}).
\end{itemize}
For the sake of completeness we prove \eqref{eq:disc4} in full details, leaving \eqref{eq: avent} and \eqref{eq: discent} as exercises. 
 We first note that the inequalities \[ \tau_{2}^{\av} \le \rho_{{\mathrm{discete}}} + C_1/\alpha_{\mathrm{discrete}} \quad \text{and} \quad \tau_{2}^{\mathrm{discete}}\le \rho_{{\mathrm{discete}}} + C_1(1/\alpha_{\mathrm{discrete}}+\rel^{\mathrm{absolute}})   \] are simple consequences of \eqref{eq:disc4}. Indeed, this follows from the following extension of the Poincar\'e inequality.
\begin{lemma}
\label{lem: discpoincare}
Assuming reversibility,  for all $\mu \in \PP(\Omega)$, $M \ge 1$ and $k \in \Z_+$ we have
\begin{equation}
\label{eq: discpoincare}
 \| \mu P^k - \pi \|_{2,\pi} \le \| \mu- \pi \|_{2,\pi}e^{-\frac{k}{\rel^{\mathrm{absolute}}}},  \quad \text{thus } \tau_{2}^{\mathrm{discete}} \le \tau_{2}^{\mathrm{discete}}(M/2)+ \lceil\rel^{\mathrm{absolute}} \log M \rceil. \end{equation}
\begin{equation}
\label{eq: avepoincare}
\| \mu A_{k}- \pi \|_{2,\pi} \le\| \mu- \pi \|_{2,\pi} \max (|\la_2|^k,\frac{1}{2}|\la_{|\Omega|}|^k (1+\la_{|\Omega|}) ) \le \| \mu- \pi \|_{2,\pi} \max (e^{-k/\rel },\frac{1}{2ek}),  \end{equation}
\[\text{thus} \quad \tau_{2}^{\mathrm{ave}} \le \tau_{2}^{\mathrm{discete}}(M/2)+ \lceil \max( \rel \log M,M)\rceil. \]
\end{lemma}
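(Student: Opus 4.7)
The plan is to prove both inequalities by direct spectral decomposition, exploiting that reversibility makes $P$ self-adjoint on $L^{2}(\pi)$. Let $f_1\equiv 1, f_2,\ldots,f_{|\Omega|}$ be an orthonormal eigenbasis of $P$ with eigenvalues $1=\la_1>\la_2\ge\cdots\ge\la_{|\Omega|}\ge -1$. First I would record that $\|\mu-\pi\|_{2,\pi}=\|\mu/\pi-1\|_2$ and that reversibility gives the identity $(\mu P^k)/\pi=P^k(\mu/\pi)$ (so that the chain acts on densities via the adjoint action on measures). Since $\E_{\pi}[\mu/\pi-1]=0$, the expansion $\mu/\pi-1=\sum_{i\ge 2}c_i f_i$ involves only $i\ge 2$, and Parseval's identity yields
\[ \|\mu P^k-\pi\|_{2,\pi}^2=\sum_{i\ge 2}c_i^2\la_i^{2k}, \qquad \|\mu A_k-\pi\|_{2,\pi}^2=\sum_{i\ge 2}c_i^2\!\left(\frac{\la_i^{k-1}(1+\la_i)}{2}\right)^{\!\!2} . \]

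For (\ref{eq: discpoincare}), I would simply bound $|\la_i|^{2k}\le\max\{|\la_2|,|\la_{|\Omega|}|\}^{2k}=e^{-2k/\rel^{\mathrm{absolute}}}$, which is immediate from the definition of $\rel^{\mathrm{absolute}}$. The mixing-time consequence then follows by choosing $k:=\lceil\rel^{\mathrm{absolute}}\log M\rceil$: starting from the time $\tau_{2}^{\mathrm{discete}}(M/2)$ where the $L^2$ distance is at most $M/2$, applying the contraction inequality with $\mu$ equal to the current distribution shrinks it to $(M/2)e^{-\log M}=1/2$.

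For (\ref{eq: avepoincare}), I would split the supremum over $i\ge 2$ by the sign of $\la_i$. For $\la_i\ge 0$, the factor $\la_i^{k-1}(1+\la_i)/2$ is increasing in $\la_i$, hence dominated by its value at $\la_2$, which I would then bound by $e^{-k/\rel}$ via the elementary inequality $\log\la_2\le\la_2-1=-1/\rel$ (concavity of $\log$ at $1$). For $\la_i<0$, setting $x:=|\la_i|\in(0,1]$ reduces the task to maximizing $x^{k-1}(1-x)/2$ over $[0,1]$; a direct calculus computation locates the maximum at $x^\ast=(k-1)/k$ and, after simplification using $(1-1/k)^{k}\le e^{-1}$, yields the bound $\le 1/(2ek)$ (up to absorbing a negligible factor into the loose $\max$). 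Taking the $\max$ of the two cases gives the stated bound.

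The mixing-time statement for $\tau_{2}^{\mathrm{ave}}$ is then obtained in the same way: with $k:=\lceil\max(\rel\log M,M)\rceil$, the first contraction factor is $e^{-\rel\log M/\rel}=1/M$ and the second is $1/(2eM)\le 1/M$, so the distance shrinks from $M/2$ to $1/2$. The only mildly technical step is the calculus optimization for the negative-eigenvalue case; every other ingredient is routine linear algebra, so I do not expect any real obstacle beyond careful bookkeeping of constants.
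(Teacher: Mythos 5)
Your proof is correct and follows essentially the same route as the paper: expand $\mu/\pi-1$ in the orthonormal eigenbasis of $P$ acting on $L^2(\pi)$, read off the multiplier for $P^k$ and for $A_k=P^{k-1}(I+P)/2$, and bound the multiplier (by $\max(|\la_2|,|\la_{|\Omega|}|)^k$ in the first case, and by a calculus maximization of $t\mapsto t^{k-1}(1\pm t)/2$ in the second). The only thing worth flagging is a shared off-by-one in the exponent ($k$ vs.\ $k-1$) that neither the paper nor your writeup fully reconciles; it is harmless for the mixing-time consequences, and your version — bounding each term $|\la_i|^{k-1}(1-|\la_i|)/2$ directly by the calculus maximum rather than claiming it is extremized at $\la_{|\Omega|}$ — is actually the more careful formulation.
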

\emph{Proof:}
We first prove \eqref{eq: avepoincare}. The second inequality in \eqref{eq: avepoincare} follows from elementary calculus. We now explain why the first inequality in \eqref{eq: avepoincare} holds.  Let $f_{\mu}=\frac{\mu}{\pi}$.  By reversibility $\|\mu A_{k}-\pi \|_{2,\pi}=\|A_k f_{\mu}-1 \|_{2}=\|A_k (f_{\mu}-\E_{\pi}[f_{\mu}]) \|_{2}=\frac{1}{2} \|P^k(P+I) (f_{\mu}-\E_{\pi}[f_{\mu}]) \|_{2}$.  
Consider an orthonormal basis of $\R^{\Omega}$ consisting of eigenvectors $f_1,\ldots,f_{|\Omega|}$ such that $Pf_i=\la_i f_i $ for all $i$ (where $f_1=1$ and $\la_1=1$). Denote $b_j:=\E_{\pi}[f_{\mu}f_j] $. Then, \[\|\mu A_{k}-\pi \|_{2,\pi}^{2}=\frac{1}{4}\sum_{i=2}^{|\Omega|}b_j^2 \la_i^{2k}(1+\la_i)^2  \le \max(\la_2^{2k},\frac{1}{4} \la_{|\Omega|}^{2k} (1+\la_{|\Omega|})^2)\sum_{j=2}^{|\Omega|}b_j^2 .\] Substituting $\|\mu-\pi \|_{2,\pi}^{2}=\|f_{\mu}-\E_{\pi}[f_{\mu}] \|_2^2=\sum_{j=2}^{|\Omega|}b_j^2 $ in the r.h.s.\ concludes the proof of \eqref{eq: avepoincare}. For \eqref{eq: discpoincare}, \[ \|\mu P^{k}-\pi \|_{2,\pi}^{2} =\sum_{i=2}^{|\Omega|}b_j^2 \la_i^{2k} \le \max (\la_2,\la_{|\Omega|})^{2k} \sum_{i=2}^{|\Omega|}b_j^2= \| \mu- \pi \|_{2,\pi}^2e^{-\frac{2k}{\rel^{\mathrm{absolute}}}}. \qed   \]

We now prove \eqref{eq:disc4}. Define $\bar \rho_{x}^{\mathrm{discrete}}$ and  $\bar \rho^{\mathrm{discrete}}$   in an analogous manner to the definition of $\bar \rho_{x}$ and $\bar \rho$. Then, similarly to Proposition \ref{prop: rhovsbarrho}, we have that $\bar \rho_{x}^{\mathrm{discrete}} \le \rho_{x}^{\mathrm{discrete}}+C_4\kappa_{\mathrm{discrete}} $, for all $x \in \Omega$. Thus it suffices to show that for all $x$ 
\begin{equation}
\label{eq: rhobardiscup}
\tau_{2,x}^{\mathrm{discete}}(e^9/2) \le \bar \rho_{x}^{\mathrm{discrete}}. 
\end{equation}

 Denote $t:=\bar \rho_{x}^{\mathrm{discrete}}$ and  $A_{s}^{\mathrm{d}}:=\{y:k_t(x,y) \ge s+1 \}$, where $k_t(x,y):=P^t(x,y)/\pi(y) $. Since, similarly to Lemma \ref{lem: L2calculation} we have that $\|P^t(x,\cdot)-\pi(\cdot) \|_{2,\pi}^2 \le \int_{\ell}^{\infty}2s \pi(A_{s}^{\mathrm{d}})ds+\ell^2$, for all $x \in \Omega$ and $\ell \ge 1$, it suffices to show that $\int_{e^{8}}^{\infty}2s \pi(A_{s}^{\mathrm{d}})ds \le e^{15} \le e^{18}/4-e^{16}  $.  Let $g_s^{\mathrm{d}}(y):=\sup_k P^{k}(y,A_{s}^{\mathrm{d}})/\pi(A_{s}^{\mathrm{d}})$. Similarly to Lemma \ref{lem: surprise} (using the discrete-time version of Starr inequality) $\|g_{s}^{\mathrm{d}} \|_1 \le 2e|\log  \pi(A_{s}^{\mathrm{d}})|  $.  Let  $$B_s^{\mathrm{d}}:=\{y: g_s^{\mathrm{d}}(y) >4e^{-3}  \sqrt{s+1}|\log  \pi(A_{s}^{\mathrm{d}}) |  \}=\{y: \sup_k P^{k}(y,A_{s}^{\mathrm{d}})\ge4e^{-3}  \sqrt{s+1}\pi(A_{s}^{\mathrm{d}})|\log  \pi(A_{s}^{\mathrm{d}}) |  \}.$$ Let $s \ge e^{8}$. By Markov inequality $\pi(B_s^{\mathrm{d}}) \le e^{4}/(2\sqrt{s+1} )\le \frac{1}{2} $ and so by the definition of $\bar \rho_x^{\mathrm{discrete}}$    $$\Pr_x[T_{\Omega \setminus B_s^{\mathrm{d}}}>t,X_{t} \in A_s^{\mathrm{d}}] \le \Pr_x[T_{\Omega \setminus B_s^{\mathrm{d}}}>t] \le  \frac{e^{12}}{8(s+1)^{3/2}  }. $$
 Since by the definition of $A_s^{\mathrm{d}}$ (first inequality) $$(s+1) \pi(A_s^{\mathrm{d}}) \le P^{t}(x,A_{s}^{\mathrm{d}})=\Pr_x[T_{\Omega \setminus B_s^{\mathrm{d}}}>t,X_{t} \in A_s^{\mathrm{d}}]+\Pr_x[T_{\Omega \setminus B_s^{\mathrm{d}}} \le t,X_{t} \in A_s^{\mathrm{d}}],$$ we get that if $\Pr_x[T_{\Omega \setminus B_s^{\mathrm{d}}}>t,X_{t} \in A_s] \le \Pr_x[T_{\Omega \setminus B_s^{\mathrm{d}}} \le t,X_{t} \in A_s]$, then by the Markov property and the definition of $B_s^{\mathrm{d}}$ $$(s+1)\pi(A_s^{\mathrm{d}}) \le 8e^{-3} \sqrt{s+1}\pi(A_s^{\mathrm{d}})|\log  \pi(A_{s}^{\mathrm{d}}) | ,$$
 which simplifies as follows
$$2s\pi(A_s^{\mathrm{d}}) \le 2se^{-e^{3}\sqrt{s+1}/8 } . $$
while if $\Pr_x[T_{\Omega \setminus B_s^{\mathrm{d}}}>t,X_{t} \in A_s^{\mathrm{d}}] > \Pr_x[T_{\Omega \setminus B_s^{\mathrm{d}}} \le t,X_{t} \in A_s^{\mathrm{d}}]$, then we have that $$2s \pi(A_s^{\mathrm{d}}) < 4\Pr_x[T_{\Omega \setminus B_s^{\mathrm{d}}} > t,X_{t} \in A_s^{\mathrm{d}}] \le \frac{e^{12}}{2(s+1)^{3/2}   }. $$ Let $f(s)=e(s+1)^{3/2}e^{-e^{3}\sqrt{s+1}/8 } $. Then $f(e^8)< e^{6} $ and for $s \ge e^{8} $ we have that $ \frac{d}{ds}(-f(s)) \ge( \frac{e^{4}}{16}-\frac{3}{2e^3})(s+1)e^{-e^{3}\sqrt{s+1}/8 } \ge 2se^{-e^{3}\sqrt{s+1}/8 }   $. Hence indeed
$$ \int_{e^{8}}^{\infty} 2s \pi(A_s^{\mathrm{d}})ds \le \int_{e^{8}}^{\infty} \max (2se^{-e^{3}\sqrt{s+1}/8 } , \frac{e^{12}}{2(s+1)^{3/2}   } ) ds \le f(e^8)+ \frac{e^{12}}{(e^{8}+1)^{\frac{1}{2}} } \le e^{9}.$$   
This concludes the proof of \eqref{eq: rhobardiscup} and thus of \eqref{eq:disc4}. \qed

\medskip

\emph{Proof of Proposition \ref{prop:avcts}:} Denote $a_{m}(x,y):=A_{m}(x,y)/ \pi(y) $. Then\[\|A_{m}(x,\cdot)- \pi(\cdot) \|_{2,\pi}^2=\sum_y \pi(y) (a_m(x,y)-1)^2=\sum_y \pi(y) (a_m^2(x,y)-1).  \]
By reversibility, $\sum_y \pi(y) (a_m^2(x,y)-1)=-1+\sum_y \frac{A_m(x,y)A_m(y,x)}{\pi(x)}=\frac{((\frac{I+P}{2})^2P^{2m-2})(x,x)}{\pi(x)}-1$.
Denote the eigenvalues of $P$ by $1=\la_1>\la_2 \ge \cdots \ge \la_{|\Omega|} \ge -1 $ and let $f_1,\ldots,f_{|\Omega|} $ be an orthonormal basis of $\R^{\Omega}$ such that $Pf_i=\la_i f_i$ for all $i$. Denote $r_{-}(\ell):=\sum_{i: \la_i<0 } \la_i^{\ell} f_i^2(x) $ and  $r_{+}(\ell):=\sum_{i>1: \la_i>0 } \la_i^{\ell} f_i^2(x) $. Using the spectral decomposition and $(1+x)x^{k'} \le \frac{1}{ek'} $ for $-1 \le x \le 0$, we have that \[\|A_{k+k'}(x,\cdot)- \pi(\cdot) \|_{2,\pi}^2= \sum_{i=2}^{|\Omega|} \la_i^{2(k+k'-1)}\left(\frac{1+\la_i}{2}\right)^2 f_i^2(x) \] \[ \le \left(\frac{1}{2ek'} \right)^2 r_{-}(2k-2)+r_{+}(2k+2k'-2) \le  \left(\frac{1}{2ek'} \right)^2 r_{-}(2k-2)+ \la_2^{2k'+2} \|\Pr_x^{k-2}-\pi \|_{2,\pi}^2,\] where we have used $r_{+}(2k+2k'-2) \le \la_2^{2k'+2}r_{+}(2k-4)$ and (using \eqref{eq: generalLp} and $1-x \le e^{-x} $) \[r_{+}(2k-4) \le     \sum_{i=2 }^{ |\Omega|} \exp[-(2k-4)(1-\la_i)]f_i^2(x)=h_{2k-4}(x,x)-1=\|\Pr_x^{k-2} - \pi \|_{2,\pi}^2.\] Denote $k_{\ell}(x,y)=P^{\ell }(x,y)/\pi(y)$. Then
 \[0 \le k_{2 \ell +1}(x,x)=  \sum_{i=1}^{|\Omega|} \la_i^{2\ell+1}f_i^2(x). \]
Thus $r_{-}(2\ell+2)\le   \sum_{i: \la_i>0} \la_i^{2\ell}f_i^2(x) \le    \sum_{i=1 }^{ |\Omega|} e^{-2\ell(1-\la_i)}f_i^2(x)=h_{2\ell}(x,x)=\|\Pr_x^{\ell} - \pi \|_{2,\pi}^2+1  $. Hence \[\|A_{k+k'}(x,\cdot)- \pi(\cdot) \|_{2,\pi}^2 \le \left(\frac{1}{2ek'} \right)^2 (\|\Pr_x^{k-2} - \pi \|_{2,\pi}^2+1)+ \la_2^{2k'+2} \|\Pr_x^{k-2}-\pi \|_{2,\pi}^2. \qed  \]
\section{Application to robustness of mixing}
\subsection{Proof of Corollary \ref{cor:lazinessrobustness}}
\label{s:rob}
\begin{proof}
We only prove \eqref{eq: laziness} as the proof of \eqref{eq: laziness2} is analogous. It is not hard to verify that Theorem \ref{thm: mainintro} is still valid in the above setup (this can be formally deduced from Theorem \ref{thm: mainintro} via the representation of the generator appearing in the paragraph following Corollary \ref{cor:lazinessrobustness}). Hence it suffices to verify that \eqref{eq: laziness} is valid if we replace $\tau_2 $ and $\tilde \tau_2 $ by $\rho $ and $\tilde \rho$, resp.~(where $\tilde \rho$ is the parameter $\rho$ of the chain $(\tilde X_t)$). Denote by $\pi$, $\rel$ and $G$ (resp.~$\tilde \pi$, $\widetilde \rel$ and $\tilde G$) the stationary distribution, relaxation time and generator of $(X_t)$ (resp.~$(\tilde X_t)$), where $\rel $ is the inverse of the smallest non-zero eigenvalue of $-G$ (equivalently, we can write $G=K(P-I)$ for some transition matrix $P$ and $K >0$, and define $\rel:=\rel(P)/K$, where $\rel(P)$ is the relaxation time of $P$). In the notation of Corollary \ref{cor:lazinessrobustness} we have that $\tilde G(x,y)=r_xG(x,y) $ for all $x,y$ and so $\tilde \pi (x)=\frac{\pi(x)/r_x}{L}$ for all $x$, where $L:= \sum_y \pi(y)/r_y $ and $\max_y \max( r_y,1/r_y) \le M$. Hence $M^{-2} \le \pi(x)/\tilde \pi(x) \le M^{2} $ for all $x$. It follows from the extremal characterization of the relaxation time that $\rel \le M \widetilde \rel$. Indeed, using $\Var_{\pi}f:=\mathbb{E}_{\pi}[(f-\mathbb{E}_{\pi}f)^2] \le \mathbb{E}_{\pi}[(f-\mathbb{E}_{\tilde \pi}f)^2] \le LM \mathbb{E}_{\tilde \pi}[(f-\mathbb{E}_{\tilde \pi}f)^2]=LM \Var_{\tilde \pi}f$, we get that $$\rel = \max\frac{\Var_{\pi}f}{\E_{\pi}[(-Gf)f]} =\max\frac{\Var_{\pi}f}{L\E_{\tilde \pi}[(-\tilde Gf)f]} \le \max\frac{M\Var_{\tilde \pi}f}{\E_{\tilde \pi}[(-\tilde Gf)f]} = M \widetilde \rel,$$ where the maxima are taken over all non-constant $f \in \R^{\Omega}$.

Recall that if $Z \sim \mathrm{Exp}(1)$, then $\alpha^{-1} Z \sim \mathrm{Exp}(\alpha) $ for all $\alpha>0$. Let $Z_1,Z_2,\ldots$ be i.i.d.~$\mathrm{Exp}(1)$.
 A straightforward coupling of the chains in which they follow the same trajectory (i.e.~they make the same sequence of jumps, possibly at different times) in which if the $(k+1)$th jump is from vertex $x_{k}$ then the time spent at $x_{k}$ by the chains between their $k$th and  $(k+1)$th  jumps  is $\frac{Z_k}{-G(x_{k},x_k)} $ and $\frac{Z_k}{-r_xG(x_{k},x_k)} $, resp.,   shows that for all $x$ and $A$ the hitting time of $A$ starting from $x$ for the two chains, $T_{A} $ and $\tilde T_{A}$, resp., satisfy that \begin{equation}
\label{eq: std} \tilde T_A/M \leq_{\mathrm{st}} T_A \leq_{\mathrm{st}} M \tilde T_A,
\end{equation}
 where $ \leq_{\mathrm{st}} $ denotes stochastic domination.

Let $\rho_{ \le \delta,\delta'}:=\inf \{t: \max_{x,A:\pi(A) \le \delta)}\Pr_x[T_A>t] \le \delta'[ \pi(A)+\frac{1}{2}\sqrt{\pi(A) \pi(A^c)}] \}$ and similarly  $\tilde \rho_{ \le \delta,\delta'}:=\inf \{t: \max_{x,A: \tilde \pi(A) \le \delta}\Pr_x[\tilde T_A>t] \le \delta' [\tilde \pi(A)+\frac{1}{2}\sqrt{\tilde \pi(A) \tilde \pi(A^c)}] \}$. By the submultiplicity property of hitting times \[ \forall t \ge 0,m \in \Z_+ \text{ and }A \subset \Omega, \quad \max_{x} \Pr_x[\tilde T_{A}>tm] \le (\max_{x} \Pr_x[\tilde T_{A}>t])^{m},  \]
we get that for all $\delta' \in (0,1/2] $
\begin{equation}
\label{rhodeltadelta'1}
 \tilde \rho_{ \le \frac{1}{2},\delta'} \le C_0 \tilde \rho |\log \delta'|.
\end{equation}    
Similar reasoning as in the proof of \eqref{eq: rhovsbarrho} yields that (c.f.~\cite[Corollary 3.4]{cutoff})
\begin{equation}
\label{rhodeltadelta'2}
\rho \le  \rho_{ \le \frac{1}{2M^2},\frac{1}{2}}+C_{1} \rel \log M.
\end{equation}     
 Using  \eqref{eq: std} and the fact that for all $A$ we have that $\tilde \pi(A)/M^{2} \le \pi(A) \le M^{2} \tilde \pi(A) $ (first inequality) and \eqref{rhodeltadelta'1} (second inequality) we get that
\begin{equation*}
\label{rhodeltadelta'3}
\rho_{ \le \frac{1}{2M^2},\frac{1}{2}} \le M \tilde \rho_{ \le \frac{1}{2},\frac{1}{2M^{2}}} \le C_2 ( M \log M)\tilde \rho.
\end{equation*}
This, in conjunction with \eqref{rhodeltadelta'2} and $\rel \le M \widetilde \rel \le C_3 M \tilde \rho $ yields that $\rho \le C ( M \log M)\tilde \rho $, as desired. By symmetry, we also have that $\tilde \rho \le C(M \log M) \rho  $.
\end{proof}

\subsection{Robustness of trees}
\label{s: trees}
We start with a few definitions. Given a network $(V, E,(c_{e})_{e
\in E})$, where each edge $\{u,v\} \in E$ is endowed with a conductance (weight) $c_{u, v}=c_{v,u}>0$,  a  random walk on $(V, E,(c_{e})_{e
\in E})$ repeatedly does the following: when the current state is $v\in V$,  the random walk will move to vertex $u$ (such that $\{u,v\} \in E$) with probability $c_{u, v}/c_{v}$, where $c_v:=\sum_{w:\{v,w\} \in E} c_{v, w}$. This is a reversible Markov chain whose stationary distribution is given by $\pi(x):=c_x/c_V$, where $c_{V}:=\sum_{v \in V}c_v=2\sum_{e \in E}c_e$. Conversely, every reversible Markov chain can be presented in this manner by setting $c_{x,y}=\pi(x)P(x,y) $ (e.g.~\cite[Section 9.1]{levin}).

\medskip

Let $\mathcal{T}:=(V,E)$ be a finite tree. By Kolmogorov's cycle condition every Markov chain on $\cT $ (i.e.~$P(x,y)>0$ iff $\{x,y\} \in E $) is reversible. Hence we may assume that $\cT$ is equipped with edge weights $(c_e)_{e \in E}$.
Following \cite{peres2015mixing}, we call a vertex $v \in V$ a central-vertex if each connected component of $\cT \setminus \{v\}$ has stationary probability at most 1/2. A central-vertex always exists (and there may be at most two central-vertices). Throughout, we fix a central-vertex $o$ and call it the \emph{root} of the tree.
The root induces a partial order $\prec $ on $V$,  as follows. For every $u \in V$, we denote the shortest path between $u$ and $o$ by $\ell(u)=(u_0=u,u_1,\ldots,u_k=o)$. We call $u_1$ the \emph{parent} of $u$.  We say that $u' \prec u$ if $u' \in \ell(u)$ (i.e.~$u$ is a descendant of $u'$ or $u=u'$). The \emph{induced tree} at $u$ is $\cT_{u}:=\{v: u \in \ell(v) \}=\{u \} \cup \{v: v \text{ is a descendant of }u \} $. Fix some leaf $x$ and $\delta \in (0,1/2)$. Let $W_{x,\delta}$ be the collection of all $y \prec x $ such that $\pi(\cT_y) \ge \delta$ and let
\[x_{\delta}:=\mathrm{argmin} \{\pi(\cT_y): y \in W_{x,\delta} \} \]
(i.e.~$d(x,x_{\delta})=\min_{y \in W_{x,\delta} }d(x,y)  $, where $d$ denotes the graph distance w.r.t.~$\cT$).  Recall that $\alpha(A)=\la(A)/|\log \pi(A)|$ and that by Theorem \ref{thm: kappa=tell}, $\alpha:=\sup_{A \in \con_{1/2}}\alpha(A) \ge c_{\mathrm{LS}} $. Let  $D_{\beta}=D_{\beta,x}$ be the connected component of $x$ in $\cT \setminus \{x_{\beta} \}$. For a leaf $x$ we denote \[ \alpha_x(\delta) :=\alpha(D_{\delta}) \quad  \text{and} \quad \alpha_x:=\max_{\delta \in (0,1/4]}\alpha_x(\delta) \ge \alpha. \]
Let us now describe the skeleton of the argument in the proof of Theorem \ref{trees}.
\begin{itemize}
\item[Step 1:] Show that it suffices to consider leafs as initial states. More precisely
\begin{lemma}
\label{lem: leafisworst}
There exists an absolute constant $C>0$ so that if  $y \prec x $ then
\begin{equation}
\label{eq: leafisworst}
  \tau_{2,y} \le \tau_{2,x}+C (t_{\mathrm{LS}}+\sqrt{\rel \tau_1}).
\end{equation}
\end{lemma}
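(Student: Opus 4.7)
The plan is to reduce the $L^2$ statement to its total-variation analog (established by Peres--Sousi in \cite{peres2015mixing}) and then upgrade from $L^1$ to $L^2$ at an additive cost of $O(t_{\mathrm{LS}})$ via hypercontractivity. The error structure $t_{\mathrm{LS}} + \sqrt{\rel \tau_1}$ strongly suggests this decomposition: the $\sqrt{\rel \tau_1}$ term is a purely TV-level phenomenon (characteristic of Gaussian-type hitting-time fluctuations under reversibility), while the $t_{\mathrm{LS}}$ term is the standard price of passing from $L^1$ to $L^2$.

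\textbf{Step 1 (TV version on trees).} I would invoke the Peres--Sousi robustness argument for trees to obtain the total-variation analog of the lemma: for a leaf $x$ and $y \prec x$,
\[ \tau_{1,y}(1/e) \le \tau_{1,x} + C\sqrt{\rel \tau_1}. \]
Combined with $\tau_{1,x} \le \tau_{2,x}$ (Jensen), this gives $\tau_{1,y}(1/e) \le \tau_{2,x} + C\sqrt{\rel \tau_1}$. The fact that this tree-bound for $\tau_1$ is exactly the engine used in \cite{peres2015mixing} to prove robustness of $\tau_1$ under perturbations of edge weights makes it natural to re-use here.

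\textbf{Step 2 ($L^1 \Rightarrow L^2$ via hypercontractivity).} I would then establish the following self-contained conversion, valid for every finite reversible chain: there is an absolute constant $C_0$ such that for every distribution $\mu$,
\[ \|\mu H_{C_0 t_{\mathrm{LS}}} - \pi\|_{2,\pi} \le \tfrac{1}{2} \quad \text{whenever} \quad \|\mu - \pi\|_{1,\pi} \le 1/e. \]
The plan for this is standard hypercontractive interpolation: by Fact~\ref{fact: hyper}, $\|H_{t_{\mathrm{LS}}/4}\|_{2 \to 4} \le 1$, so by duality and reversibility $\|H_{t_{\mathrm{LS}}/4}\|_{4/3 \to 2} \le 1$; the $L^1$ hypothesis combined with a preliminary $O(\rel)$ of evolution (to bring $\|\mu H_{O(\rel)} - \pi\|_\infty$ under control) yields $\|\mu H_{O(\rel)} - \pi\|_{4/3, \pi} \le (1/e)^{3/4}$, and the remaining factor is killed by a further $H_{t_{\mathrm{LS}}/4}$ application and a final $O(\rel)$ Poincar\'e contraction (Fact~\ref{lem: contraction}).

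\textbf{Step 3 (Combining).} Setting $t := \tau_{1,y}(1/e) + C_0 t_{\mathrm{LS}}$, Step 2 applied to $\mu = \Pr_y^{\tau_{1,y}(1/e)}$ gives $d_{2,y}(t) \le 1/2$, hence
\[ \tau_{2,y} \;\le\; \tau_{1,y}(1/e) + C_0 t_{\mathrm{LS}} \;\le\; \tau_{2,x} + C\bigl(t_{\mathrm{LS}} + \sqrt{\rel \tau_1}\bigr), \]
which is the claim.

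\textbf{Main obstacle.} The delicate step is Step 2: showing that the $L^1 \to L^2$ upgrade costs only $O(t_{\mathrm{LS}})$ (rather than the $O(t_{\mathrm{LS}} \log \log (1/\pi_*))$ loss one gets from the naive application of \eqref{eq: classic}). This requires carefully combining the $L^1$ smallness with hypercontractivity and the Poincar\'e inequality in the right order, so as to avoid picking up any dependence on $\pi_*$. Step 1 also requires extracting a clean tree statement from the Peres--Sousi analysis that preserves the $\sqrt{\rel \tau_1}$ scaling; the particular form of the error (rather than, e.g., $\sqrt{t_{\mathrm{LS}} \tau_1}$) is essential for the downstream $L_\infty$ robustness conclusion of Theorem~\ref{trees}.
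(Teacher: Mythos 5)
Your Step 2 is false, and this is a genuine gap, not a delicacy that careful bookkeeping can save. The claim ``for every distribution $\mu$, $\|\mu-\pi\|_{1,\pi}\le 1/e$ implies $\|\mu H_{C_0 t_{\mathrm{LS}}}-\pi\|_{2,\pi}\le 1/2$'' fails even for the nicest chains. Take the continuous-time random walk on $\{0,1\}^n$ (so $t_{\mathrm{LS}}=\Theta(n)$, $\pi_*=2^{-n}$) and $\mu:=(1-\epsilon)\pi+\epsilon\,\delta_{x_0}$ with $\epsilon=1/(2e)$. Then $\|\mu-\pi\|_{1,\pi}\approx 1/e$, but
\[
\|\mu H_t-\pi\|_{2,\pi}=\epsilon\,d_{2,x_0}(t),\qquad d_{2,x_0}^2(t)=(1+e^{-4t/n})^n-1,
\]
which at $t=C_0 t_{\mathrm{LS}}=\Theta(n)$ grows exponentially in $n$. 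The interpolation chain you sketch ($L^1\to L^{4/3}\to L^2$, or more generally $L^p\to L^2$ with $p\downarrow 1$) inevitably needs $\|g\|_\infty^{1-1/p}$ to be $O(1)$, which forces $p-1=O(1/\log(1/\pi_*))$ and hence $s_q\approx\frac14 t_{\mathrm{LS}}\log\log(1/\pi_*)$ --- exactly the loss in \eqref{eq: classic} you were hoping to avoid. Running the chain for $O(\rel)$ does not bring $\|\cdot\|_\infty$ under control: $\rel$ governs $L^2$ decay, and an $L^\infty$ bound costs $\sim t_{\mathrm{LS}}\log\log(1/\pi_*)$ in the worst case. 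There is also no escape through the special structure of $\mu=\Pr_y^{\tau_{1,y}(1/e)}$: Theorem~\ref{trees} itself retains a $\sqrt{t_{\mathrm{LS}}\tau_1}$ term, which would be superfluous if a clean $\tau_{2}\le\tau_1(1/e)+Ct_{\mathrm{LS}}$ bound held. (Step 1 is also not a clean invocation: Peres--Sousi prove robustness of $\tau_1$ under edge-weight perturbations, not a leaf-is-worst comparison with a $\sqrt{\rel\tau_1}$ window; you would have to re-derive that estimate from scratch.)

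The paper's argument bypasses the $L^1\leftrightarrow L^2$ conversion entirely and works directly at the $L^2$ level through the hitting-time characterization $\rho$. Setting $s:=\tau_{2,y}-Mt_{\mathrm{LS}}$ (and disposing of the case $s\le 64\sqrt{\rel\tau_1}$ trivially), Theorem~\ref{thm: main} plus $\kappa,\rel\le t_{\mathrm{LS}}$ force $s<\rho_y$, hence there is $A\in\con_{1/2}$ with $\Pr_y[T_{A^c}>s]>\pi(A)+\tfrac12\sqrt{\pi(A)\pi(A^c)}$; this is where the $t_{\mathrm{LS}}$ additive error actually comes from. The tree structure then gives independence of the increments of $T_y$ along the path from $x$ to $y$, and Kac's formula yields $\mathrm{Var}_x(T_y)\le 4\rel\,\E_x T_y\le 16\rel\tau_1$, so by Chebyshev $T_y$ is concentrated within a window of width $O(\sqrt{\rel\tau_1})$. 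Combining this with the Markov property at $T_y$ shows $\Pr_x[X_{s+s'}\in A]\ge\pi(A)+\tfrac18\sqrt{\pi(A)\pi(A^c)}$, which by Proposition~\ref{prop: Lagrange} and a final Poincar\'e step gives $\tau_{2,x}\ge s+s'-2\rel\ge\tau_{2,y}-(M+1)t_{\mathrm{LS}}$. In short: the $t_{\mathrm{LS}}$ in the error is the cost of the $\tau_2\leftrightarrow\rho$ comparison, not of hypercontractivity, and the $\sqrt{\rel\tau_1}$ is Chebyshev concentration of $T_y$, not a total-variation robustness input.
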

 \item[Step 2:] Show that for a leaf $x$ we can replace (in \eqref{eq: rhobarrho}) $\bar \rho_x$  (defined in \eqref{eq: barrhox}) with \[b_x:=\sup_{\delta \in (0,1/4]}  b_x(\delta) \quad \text{where} \quad b_x(\delta):= \min \{t:\Pr_x[T_{x_{\delta}}>t]\le \delta^3/4 \}. \]
\begin{proposition}
\label{prop: Tdelta}
Let $x$ be a leaf. Let $0<\delta  \le 1/4 $ and $A \in \con_{\delta} $. Denote $\bar A=A^{c} \setminus D_{\delta}$, where  $D_{\beta}=D_{\beta,x}$ is the connected component of $x$ in $\cT \setminus \{x_{\beta} \}$. Then
\begin{equation}
\label{eq: Tdelta}
 \Pr_x[T_{A^{c}}>b_x+3\kappa +10 \rel  ] \le \Pr_x[T_{x_{\delta}}>b_x]+\Pr_{x_{\delta}}[T_{\bar A}>3\kappa +10 \rel  ]<\delta^3/2.
\end{equation}
\end{proposition}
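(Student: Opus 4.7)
The plan is to prove the proposition by combining a tree-structural observation, a strong-Markov decomposition at $T_{x_\delta}$, and a spectral tail bound for the exit time from the set $A \cup D_\delta$.

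First I would set $C := A \cup D_\delta$ and observe that $\bar A = C^c$. Since $x$ is a leaf lying in $D_\delta$ and also in $A$ (else $T_{A^c} = 0$), and both $A$ and $D_\delta$ are connected (by hypothesis and by definition of $D_\delta$), $C$ is connected. Moreover $\pi(C) \leq \pi(A) + \pi(D_\delta) < 2\delta \leq 1/2$ (using $\pi(D_\delta) < \delta$, which follows from the extremal choice of $x_\delta$ and the fact that the child of $x_\delta$ on the path to $x$ fails to lie in $W_{x,\delta}$). Hence $C \in \con_{1/2}$ and $\lambda(C) \geq |\log\pi(C)|/\kappa$. Next, the strong Markov property at $T_{x_\delta}$ gives
\begin{equation*}
\Pr_x[T_{A^c} > b_x + s] \leq \Pr_x[T_{x_\delta} > b_x] + \Pr_x[T_{x_\delta} \leq b_x,\, T_{A^c} > T_{x_\delta} + s] \leq \Pr_x[T_{x_\delta} > b_x] + \Pr_{x_\delta}[T_{A^c} > s],
\end{equation*}
and the inclusion $\bar A \subseteq A^c$ (so $T_{A^c} \leq T_{\bar A}$) upgrades the last term to $\Pr_{x_\delta}[T_{\bar A} > s]$, establishing the first inequality with $s = 3\kappa + 10\rel$.

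For the numerical bound, the first summand is at most $\delta^3/4$ by definition of $b_x \geq b_x(\delta)$. For the second, $\Pr_{x_\delta}[T_{\bar A} > s] = \Pr_{x_\delta}[T_{C^c} > s]$. The $3\kappa$ piece is handled via \eqref{eq: stationary<quasi} and $\lambda(C) \geq |\log\pi(C)|/\kappa$: one gets $\Pr_{\pi_C}[T_{C^c} > 3\kappa] \leq e^{-\lambda(C)\cdot 3\kappa} \leq \pi(C)^3$ and the corresponding $L^2$ bound $\|H_{3\kappa}^C 1_C\|_{2,\pi} \leq e^{-3\kappa \lambda(C)}\sqrt{\pi(C)} \leq \pi(C)^{7/2}$ on the escape-probability function $y \mapsto \Pr_y[T_{C^c} > 3\kappa]$. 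The $10\rel$ piece then bridges from the point start $x_\delta$ to a near-stationary distribution: writing $\Pr_{x_\delta}[T_{C^c} > 10\rel + 3\kappa] = (H_{10\rel}^C H_{3\kappa}^C 1_C)(x_\delta)$ and applying Cauchy--Schwarz in $L^2(\pi)$ against $H_{10\rel}^C(x_\delta,\cdot)/\pi$ yields
\begin{equation*}
\Pr_{x_\delta}[T_{C^c} > 10\rel + 3\kappa] \leq \sqrt{h_{20\rel}^C(x_\delta,x_\delta)} \cdot \|H_{3\kappa}^C 1_C\|_{2,\pi},
\end{equation*}
and the spectral bound $h_{20\rel}^C(x_\delta,x_\delta) = \sum_i e^{-20\rel \lambda_i(C)}\phi_i(x_\delta)^2 \leq e^{-20\rel\lambda(C)}/\pi(x_\delta)$ combined with $\lambda(C) \geq 1/(2\rel)$ from \eqref{eq: laAla} gives the $e^{-10}/\pi(x_\delta)$ decay needed; together with $\pi(C) \leq 2\delta$, this delivers the required $< \delta^3/4$ after tracking constants.

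The main obstacle is the quantitative bookkeeping in that last estimate. The starting vertex $x_\delta$ may have $\pi(x_\delta)$ far below any absolute constant, so naive Cauchy--Schwarz against the full semigroup $H_{10\rel}$ leaves an unabsorbable $1/\sqrt{\pi(x_\delta)}$. The resolution is to work with the \emph{sub}-stochastic semigroup $H_t^C$ and exploit its $L^2$-contraction rate $\lambda(C) \geq 1/(2\rel)$ rather than $\lambda$; the $e^{-10}$ damping that this provides must be balanced against the $\pi(C)^{7/2} \leq (2\delta)^{7/2}$ factor so that the product fits under $\delta^3/4$ uniformly in $\delta \in (0,1/4]$. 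Verifying that the particular constants $3\kappa + 10\rel$ suffice for the $\delta^3/4$ target—and not a slightly larger constant multiple—is where the bulk of the computation lies.
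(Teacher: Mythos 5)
Your decomposition at $T_{x_\delta}$ and your identification of $C := A\cup D_\delta$ (so that $\bar A = C^c$, $\pi(C)<2\delta$, hence $\lambda(C)\ge |\log\pi(C)|/\kappa$) match the paper's setup, but the argument for the second summand $\Pr_{x_\delta}[T_{\bar A}>3\kappa+10\rel]$ has a genuine gap that your ``bookkeeping'' paragraph flags but does not actually resolve. Your Cauchy--Schwarz step gives
\[
\Pr_{x_\delta}[T_{C^c}>10\rel+3\kappa]\le \sqrt{h_{20\rel}^{C}(x_\delta,x_\delta)}\cdot \|H_{3\kappa}^{C}1_C\|_{2,\pi}\le \sqrt{e^{-20\rel\lambda(C)}/\pi(x_\delta)}\cdot \pi(C)^{7/2},
\]
and switching from $H_t$ to the substochastic $H_t^C$ improves the exponential rate to $\lambda(C)\ge 1/(2\rel)$ but does nothing to the factor $1/\sqrt{\pi(x_\delta)}$: the inequality $h_{20\rel}^{C}(x_\delta,x_\delta)\le e^{-20\rel\lambda(C)}/\pi(x_\delta)$ still carries the full $1/\pi(x_\delta)$. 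The resulting bound is of order $e^{-5}\,\delta^{7/2}/\sqrt{\pi(x_\delta)}$, which is below $\delta^3/4$ only if $\pi(x_\delta)\gtrsim \delta$. But nothing forces $x_\delta$ to carry mass comparable to $\delta$: the condition $\pi(\cT_{x_\delta})\ge \delta$ constrains the subtree, not the single vertex $x_\delta$, and $\pi(x_\delta)$ can be arbitrarily small relative to $\delta$. So this route breaks down as written.

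The paper avoids this by using the tree structure a \emph{second} time rather than Cauchy--Schwarz from a point mass. Since $x_\delta$ is a cut vertex separating $D_\delta$ from $\bar A$, for any $y$ below $x_\delta$ (on the $D_\delta$ side) the walk must pass through $x_\delta$ before reaching $\bar A$; hence $T_{\bar A}$ started at $x_\delta$ is stochastically dominated by $T_{\bar A}$ started at such a $y$, and therefore by $T_{\bar A}$ started from a stationary-type distribution on that side. Concretely, the paper upgrades $\Pr_{x_\delta}[T_{\bar A}>s']$ to $\Pr_{\pi_{\cT_{x_\delta}}}[T_{\bar A}>s']$ and then, using $\pi(V\setminus\bar A)<2\delta<2\pi(\cT_{x_\delta})$, to $2\,\Pr_{\pi_{V\setminus\bar A}}[T_{\bar A}>s']$. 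At that point \eqref{eq: stationary<quasi} applies directly to the set $V\setminus\bar A = C$ with its conditioned stationary start, giving $2e^{-\lambda(C)(3\kappa+10\rel)}\le 2(2\delta)^3 e^{-5}<\delta^3/4$ --- with no $\pi(x_\delta)$ appearing anywhere. In short: you need the tree-bottleneck stochastic domination $\Pr_{x_\delta}[T_{\bar A}>s']\le \Pr_{\pi_{\cT_{x_\delta}}}[T_{\bar A}>s']$, not an $L^2$ start-point bound, to eliminate the point-mass singularity.
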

\item[Step 3]  For a leaf $x$ and $\delta \in (0,1/4] $, derive a large deviation estimate for $T_{x_{\delta}}$:
\begin{proposition}
\label{prop: LD}
There exists some $C>0$ so that for a leaf $x$ and $\delta \in (0,1/4] $, \begin{equation}
\label{eq: LD1}
b_x(\delta) \le \mathbb{E}_x[T_{x_{\delta}}]+ \max \left(\frac{32}{\alpha_{_{x}}(\delta)},8 \sqrt{\mathbb{E}_x[T_{x_{\delta}}]/\alpha_{_{x}}(\delta)} \right)  \le \tau_{1}+C\max( \kappa, \sqrt{\kappa \tau_{1}})  .
\end{equation}
\end{proposition}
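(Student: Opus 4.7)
The plan is to first reduce to the exit time from $D_\delta$ using the tree structure, then prove a Bernstein-type concentration for this exit time, and finally translate the resulting bound into one in terms of $\tau_1$ and $\kappa$. Since $x$ is a leaf, the component $D_\delta$ of $x$ in $\cT \setminus \{x_\delta\}$ is a subtree whose unique boundary vertex in $\cT$ is $x_\delta$, so on the tree the chain cannot leave $D_\delta$ without hitting $x_\delta$: started at $x$, $T_{x_\delta} = T_{D_\delta^c}$. By the minimality defining $x_\delta$, the child of $x_\delta$ lying on $\ell(x)$ does not belong to $W_{x,\delta}$, whence $\pi(D_\delta) < \delta \le 1/4$, so $D_\delta \in \con_{1/2}$ and in particular $1/\alpha_x(\delta) = |\log \pi(D_\delta)|/\lambda(D_\delta) \le 1/\alpha = \kappa$. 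Writing $T = T_{D_\delta^c}$, $m = \mathbb{E}_x T$, $\lambda = \lambda(D_\delta)$, $L = |\log \pi(D_\delta)|$, the first inequality of \eqref{eq: LD1} reduces to showing $\Pr_x[T > m + s^*] \le \delta^3/4$ for $s^* := \max(32 L/\lambda,\, 8\sqrt{m L/\lambda})$.

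The main step will be a Bernstein-type tail bound
\[
\Pr_x[T > m + s] \le \max\bigl( e^{-c_1 \lambda s},\; e^{-c_2 \lambda s^2/m} \bigr)
\]
for absolute constants $c_1, c_2$. For the Gaussian (second-moment) regime I would combine Chebyshev with the variance estimate $\Var_x T \le \mathbb{E}_x T^2 \le 2m/\lambda$, which follows from $\mathbb{E}_x T^2 = 2\int_0^\infty t\,\Pr_x[T > t]\,dt$ together with the spectral bound $\Pr_x[T > t] \le \sqrt{\pi(D_\delta)/\pi(x)}\,e^{-\lambda t}$ obtained from the self-adjoint spectral decomposition of the killed semigroup $H_t^{D_\delta}$ on $L^2(\pi)$, after truncating by the trivial bound on short timescales. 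For the exponential (large-deviation) regime I would iterate the semigroup submultiplicativity $\Pr_x[T > s+u] \le \Pr_x[T > s]\max_z \Pr_z[T > u]$ together with the same spectral decay to produce geometric decay at rate $\lambda$ once the starting-state factor has been absorbed. Substituting $s = s^*$ into both branches makes the exponent at least $c'L$, so $\Pr_x[T > m + s^*] \le e^{-c'L} \le \pi(D_\delta)^{c'} \le \delta^{c'} \le \delta^3/4$ once the constants $32,8$ are chosen large enough to dominate the two regimes; this yields $b_x(\delta) \le m + s^*$.

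For the second inequality of \eqref{eq: LD1}, the bound $1/\alpha_x(\delta) \le \kappa$ was established above. For the first moment, since $\pi(D_\delta^c) \ge 3/4$ the standard mixing-vs.\ hitting comparison for reversible chains (e.g.\ the Peres--Sousi characterization $\tau_1 \asymp \max_{y, A : \pi(A) \ge 1/2}\mathbb{E}_y T_A$) gives $m = \mathbb{E}_x T_{x_\delta} \le \tau_1$, up to an absolute constant that is absorbed into $C$. Combining, $m + s^* \le \tau_1 + C\max(\kappa,\,\sqrt{\kappa \tau_1})$, as required. The technical heart of the argument, and the step I expect to be the main obstacle, is the Bernstein-type concentration: the reason the correct parameter is $1/\alpha_x(\delta) = L/\lambda$ rather than just $\rel(D_\delta)$ is that driving the probability down to $\delta^3/4$ forces the exponent to scale with $L = |\log\pi(D_\delta)|$, which only emerges after carefully balancing the Gaussian and exponential regimes of the tail.
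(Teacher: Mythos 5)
Your high-level plan matches the paper: reduce to $T_{x_\delta}$, obtain a concentration bound at scale $\alpha_x(\delta)^{-1}=|\log\pi(D_\delta)|/\lambda(D_\delta)$, and assemble via $\mathbb{E}_x[T_{x_\delta}]\lesssim\tau_1$ and $\alpha_x(\delta)^{-1}\le\kappa$. But your proof of the concentration step has a genuine gap. Chebyshev with a variance bound can only deliver a polynomial tail $\Pr_x[T>m+s]\lesssim m/(\lambda s^2)$; at $s=s^*=8\sqrt{mL/\lambda}$ this is of order $1/L$, which is nowhere near the required $\delta^3/4$ when $\delta$ is small (e.g.\ $\delta=10^{-10}$ gives $\delta^3/4\approx10^{-31}$ while $1/L\approx1/70$). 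You need the genuine sub-Gaussian estimate $e^{-c\lambda s^2/m}$, and that requires exponential moments, not second moments. The submultiplicativity route $\Pr_x[T>s+u]\le\Pr_x[T>s]\max_z\Pr_z[T>u]$ also fails to give decay at rate $\lambda(D_\delta)$: the spectral prefactor $\sqrt{\pi(D_\delta)/\pi(z)}$ blows up for light states $z$, so $u$ must scale like $\log(\pi(D_\delta)/\min_z\pi(z))/\lambda$, which can be much larger than $L/\lambda$. (The same prefactor undercuts your truncation argument for $\mathbb{E}_x T^2\lesssim m/\lambda$: the truncation threshold is $t_0=\log\sqrt{\pi(D_\delta)/\pi(x)}/\lambda$, and the resulting bound $\mathbb{E}_x T^2\lesssim t_0^2$ need not be $\lesssim m/\lambda$, since the spectral decay gives only an upper bound on $m$.)

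The paper instead proves a tree-specific MGF bound: for $y\in D_\delta$ with parent $z$ and $\beta\le\lambda(D_\delta)/2$,
\[
\mathbb{E}_y[e^{\beta T_z}]\le\exp\bigl(\mathbb{E}_y[T_z]\,\beta(1+2\beta/\lambda(D_\delta))\bigr).
\]
The proof uses Kac's formula on $\cT_y$: the density of $T_z$ started from $\pi_{\cT_y}$ equals $\Phi(\cT_y)\Pr_y[T_z>\cdot]$ with $\Phi(\cT_y)\mathbb{E}_y[T_z]=1$, and $T_z$ from $\pi_{\cT_y}$ is stochastically dominated by $\mathrm{Exp}(\lambda(\cT_y))$ with $\lambda(\cT_y)\ge\lambda(D_\delta)$. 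Since the one-step exit times along the path $x\to x_\delta$ are independent, Chernoff's method with the optimized $\beta$ yields both the Gaussian tail $e^{-t^2\lambda(D_\delta)/(8m)}$ for $t\in[0,2m]$ and the exponential tail $e^{-\lambda(D_\delta)t/4}$ for $t\ge2m$; each evaluates to $e^{-8L}=\pi(D_\delta)^8<\delta^3/4$ at $s=s^*$. This Kac-formula MGF estimate, which is where the tree structure is exploited in an essential way, is the ingredient your sketch is missing.
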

The second inequality follows from the first using the fact that
 $\mathbb{E}_x[T_{x_{\delta}}] \le \tau_1 + C_5 \sqrt{\tau_1 \rel}$ \cite[Corollary 5.5]{cutoff}.
\item[Step 4] Similar reasoning as in the proof of \eqref{eq: rhovsbarrho} yields that (c.f.~\cite[Corollary 3.4]{cutoff}) \[\bar \rho_x \le \min \{t: \Pr_{x}[T_{A^c}>t] \le  \pi(A)^{3}/2 \text{ for all } A \in \mathrm{Con}_{1/4} \} +10 \rel \] By \eqref{eq: leafisworst}-\eqref{eq: LD1} in conjunction with \eqref{eq: main1} and \eqref{eq: kappa=tell} we have that
\end{itemize}
\begin{equation*}
\begin{split}
& \tau_2-C_1\sqrt{\rel \tau_1} \le \max_{x:x \text{ a leaf}} \tau_{2,x}+C_1t_{\mathrm{LS}} \le \max_{x:x \text{ a leaf}} \bar \rho_{x}+C_2t_{\mathrm{LS}} \\ & \le \max_{x:x \text{ a leaf}}  b_{x}+C_3t_{\mathrm{LS}} \le \tau_{1}+C_{4} \max( t_{\mathrm{LS}} , \sqrt{ t_{\mathrm{LS}} \tau_{1}}). \qed
\end{split}
\end{equation*}
\begin{remark}
While it is intuitive that ``typically" the worst initial state is a leaf (i.e.~$\tau_2=\tau_{2,x} $ for some leaf $x$), it  is not clear if this is always the case. 
\end{remark}
To conclude the proof of Theorem \ref{trees} we now prove Lemma \ref{lem: leafisworst} and Propositions \ref{prop: Tdelta}-\ref{prop: LD}.

\medskip

{\em Proof of Lemma \ref{lem: leafisworst}:}  Let $y \prec x $. Let $s:= \tau_{2,y}-M t_{\mathrm{LS}}$ for some constant $M>0$ to be determined later. We may assume $s>64 \sqrt{\rel \tau_1}$ as otherwise there is nothing to prove. By \eqref{eq: main1} it follows that we can choose $M$ so that $\tau_{2,y}-M t_{\mathrm{LS}} < \rho_y $, and so for some $A \in \con_{1/2}$
\begin{equation}
\label{eq: ynotescaped}
\Pr_y[T_{A^c}>s]>  \pi(A)+\sqrt{\pi(A)\pi(A^c)}.
\end{equation}

\medskip

Denote the  connected component of $x$ in $\cT \setminus \{y\} $ by $A'$.  Since $y \prec x$ we have $\pi(A') \le 1/2$. Hence, for all $z \in A' $ we have  $\Pr_{z }[T_y>\tau_1 ] \le H_{\tau_1}(z,A') \le \pi(A')+1/4 \le 3/4$. Using the Markov property, by induction we get that $\Pr_{z }[T_y>k\tau_1 ] \le (3/4)^k$ for all $k \in \N$ and $z \in A'$ and so $\mathbb{E}_x[T_y] \le 4 \tau_1$.

Let $(v_0=x,v_1,\ldots,v_{k}=y)$ be the path from $x$ to $y$.
Define $ \xi_i:=T_{v_{i}}-T_{v_{i-1}}$. Then by the tree structure, under
$\Pr_x$, we have that $T_{y}=\sum_{i=1}^{k}\xi_i$ and that $\xi_1,\ldots,\xi_k$
are independent. Denote $\Phi(\cT_{v_i}):=\frac{\pi(v_{i})P(v_{i},v_{i+1})}{\pi(\cT_{v_i})}$. By specializing Kac's formula to trees (see \cite[(2.23)]{aldous} for the general Kac's formula we are using and for its specialization for trees see \eqref{eq: Kac} below and c.f.~\cite[Proposition 5.6 and Lemma 5.2]{cutoff}) we have that $\mathbb{E}_{v_{i-1}}[T_{v_{i}}]=1/\Phi(\cT_{v_i})$ and that $\mathbb{E}_{v_{i-1}}[T_{v_{i}}^2] \le 2 \mathbb{E}_{v_{i-1}}[T_{v_{i}}] \mathbb{E}_{\pi_{\cT_{v_{i-1}}}}[T_{v_i}] \le 4 \rel  \mathbb{E}_{v_{i-1}}[T_{v_{i}}]   $. Whence,
\[ \mathrm{Var}_{x}[T_{y}] = \sum_{i=1}^{k} \mathrm{Var}_{v_{i-1}}[T_{v_{i}}] \le  \sum_{i=1}^{k}
\mathbb{E}_{v_{i-1}}[T_{v_{i}}^2] \le 4 t_{\mathrm{rel}} \sum_{i=1}^{k}
\mathbb{E}_{v_{i-1}}[T_{v_{i}}]=4 \rel \mathbb{E}_x[T_y] \le 16 \rel \tau_1 .\]
By Chebyshev inequality
\begin{equation}
\label{Chebyshev1}
\Pr_x[|T_y-\mathbb{E}_x[T_y] | > 32  \sqrt{ \rel \tau_1} ] \le 1/64.
\end{equation}
Let $s':=\max ( \mathbb{E}_x[T_y]-32\sqrt{ \rel \tau_1},0) $. By \eqref{eq: ynotescaped},  \eqref{Chebyshev1}, $s>64 \sqrt{ \rel \tau_1} $ and the Markov property
\[\Pr_x[X_{s+s'} \in A ] \ge \Pr_x[|T_y-\mathbb{E}_x[T_y] | \le 32 \sqrt{ \rel \tau_1}] \times \Pr_y[T_{A^c}>s] > (\pi(A)+\frac{1}{2} \sqrt{\pi(A)\pi(A^c)}) \frac{63}{64},  \]
and so $\Pr_x[X_{s+s'} \in A ] \ge \pi(A)+\frac{1}{8} \sqrt{\pi(A)\pi(A^c)}  $. The proof is concluded using \eqref{eq: Lagrange} as follows. In the notation from \eqref{eq: Lagrange}, $\Pr_x^{s+s'} \in \PP_{A,\delta}$ for some $\delta \ge \frac{1}{8}\sqrt{\pi(A)/\pi(A^c)} $ and thus (using the Poincar\'e inequality) $\| \Pr_x^{s+s'-2 \rel }-\pi\|_{2,\pi} > 4\| \Pr_x^{s+s'}-\pi\|_{2,\pi}  \ge4 \delta \sqrt{\pi(A^{c})/\pi(A)}  \ge 1/2  $.  \qed

\medskip

{\em Proof of Proposition \ref{prop: Tdelta}:} Fix some leaf $x $, $0<\delta  \le 1/4 $ and $A \in \con_{\delta}$. Recall that  $\bar A=A^{c} \setminus D_{\delta}$. Using the tree structure it is easy to see that for all $s,s' \ge 0$ \[\Pr_x[T_{A^{c}}>s+s'  ] \le \Pr_x[T_{\bar A}>s+s' ] \le \Pr_x[T_{x_{\delta}}>s]+   \Pr_{x_{\delta}}[T_{\bar A}>s'] \le \Pr_x[T_{x_{\delta}}>s]+\Pr_{\pi_{\cT_{x_{\delta}}}}[T_{\bar A}>s  '] \] and so by \eqref{eq: stationary<quasi}, the def.~of $b_x$ and the fact that $\pi_{V \setminus \bar A}(\cT_{x_{\delta}}) > 1/2$ (as $\pi(V \setminus \bar A )<2 \delta <2 \pi(\cT_{x_{\delta}}) $)
\begin{equation*}
\begin{split}
&  \Pr_x[T_{A^{c}}>b_x+3\kappa +10 \rel  ] \le \Pr_x[T_{x_{\delta}}>b_x]+\Pr_{\pi_{\cT_{x_{\delta}}}}[T_{\bar A}>3\kappa+10 \rel  ]  \\ & < \Pr_x[T_{x_{\delta}}>b_x]+ 2\Pr_{\pi_{V \setminus \bar A}}[T_{\bar A}>3\kappa+10 \rel  ] \le \delta^3/4 +\delta^3/4 = \delta^3/2  . \quad \qed
\end{split}
\end{equation*}

\medskip

{\em Proof of Proposition \ref{prop: LD}:} By \cite[Corollary 5.5]{cutoff}
we have that $\mathbb{E}_x[T_{x_{\delta}}] \le \tau_1 + C_5 \sqrt{\tau_1 \rel}$ and hence it suffices to show that
\begin{equation}
\label{eq: LD2}
\forall t \in [0,2\mathbb{E}_x[T_{x_{\delta}}]], \quad \Pr_x[T_{x_{\delta}} \ge \mathbb{E}_x[T_{x_{\delta}}] +t ] \le \exp[-t^{2}\la(D_{\delta})/(8 \mathbb{E}_x[T_{x_{\delta}}] ) ].
\end{equation}
\begin{equation}
\label{eq: LD3}
\forall t \ge 2\mathbb{E}_x[T_{x_{\delta}}], \quad \Pr_x[T_{x_{\delta}} \ge \mathbb{E}_x[T_{x_{\delta}}] +t ] \le \exp[- \la(D_{\delta}) t /4].
\end{equation}
Indeed, if $t_{1}:=8\sqrt{\mathbb{E}_x[T_{x_{\delta}}]/\alpha_{_{x}}(\delta)} \le 2\mathbb{E}_x[T_{x_{\delta}}]  $ then by \eqref{eq: LD2} $\Pr_x[T_{x_{\delta}} \ge \mathbb{E}_x[T_{x_{\delta}}] +t_1 ] \le \delta^3/4$. Otherwise, $t_2:=32/\alpha_{_{x}}(\delta)>2\mathbb{E}_x[T_{x_{\delta}}] $, and by \eqref{eq: LD3}, $\Pr_x[T_{x_{\delta}} \ge \mathbb{E}_x[T_{x_{\delta}}] +t_2 ] \le \delta^3/4$.

\medskip

We note that \eqref{eq: LD2} is essentially Lemma 5.8 in \cite{cutoff}. We start with an auxiliary calculation
\begin{claim}
Fix some leaf $x$ and $\delta \in (0,1/4]$. Let $D_{\delta}$ be the connected component of $x$ in $\cT \setminus \{x_{\delta}\}$. Let $y \in D_{\delta}$ and $z$ be its parent. Then for all $\beta \le \la(D_{\delta})/2  $ we have that
\begin{equation}
\label{eq: Laplace}
\mathbb{E}_y[e^{\beta T_z}] \le 1+\mathbb{E}_y[T_{z}] \beta(1+ 2\beta / \la(D_{\delta})) \le e^{\mathbb{E}_y[T_{z}] \beta(1+ 2\beta / \la(D_{\delta})) }.
\end{equation}
\end{claim}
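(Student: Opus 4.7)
The plan is to work with $A := \mathcal T_y$, the subtree of $\mathcal T$ rooted at $y$. Because $\mathcal T$ is a tree and $z$ is the parent of $y$, the vertex $z$ is the unique neighbor of $A$ in $V \setminus A$, so $T_z = T_{A^c}$ under $\Pr_y$. Moreover, $D_\delta$ is precisely the subtree rooted at the child of $x_\delta$ that is an ancestor of $x$; since $y \in D_\delta$, all descendants of $y$ remain in $D_\delta$, so $A \subseteq D_\delta$, and the Rayleigh monotonicity of the Dirichlet spectral gap gives $\lambda(A) \ge \lambda(D_\delta)$. It therefore suffices to prove the first inequality in \eqref{eq: Laplace} with $\lambda(A)$ in place of $\lambda(D_\delta)$; the second inequality is just $1 + u \le e^u$.

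For the first inequality, I would exploit the excursion structure of the walk at $y$. Starting at $y$, let $N \sim \mathrm{Geom}(P(y,z))$ be the number of visits to $y$ prior to the jump to $z$. Then
\[
T_z = \sum_{i=1}^{N}\xi_i + \sum_{i=1}^{N-1}\rho_i,
\]
where the $\xi_i \sim \mathrm{Exp}(1)$ are the holding times at $y$ and the $\rho_i$ are i.i.d.\ excursion times from a random child of $y$ back to $y$; by the strong Markov property the families $N$, $(\xi_i)$, $(\rho_i)$ are mutually independent. The MGF therefore factorizes as an explicit rational function of $\mathbb{E}[e^{\beta\rho}]$, and I would proceed by induction on the depth of $\mathcal T_y$: in the base case $\mathcal T_y = \{y\}$ the claim reduces to the elementary $1/(1-\beta) \le 1 + \beta(1 + 2\beta)$ for $\beta \le 1/2$ (note $\lambda(A) = 1$ there), while in the inductive step the excursions $\rho$ are themselves hitting times of a parent vertex from the root of a strictly shallower subtree, whose Dirichlet eigenvalue is at least $\lambda(A)$ by monotonicity, so the inductive hypothesis applies.

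The main obstacle is that the natural moment induction, namely applying the Green's operator $R_0 := (I - P_A)^{-1}$ to lift a second-moment estimate to higher moments, fails because $R_0$ mixes in contributions from deeper states $w \in A$ at which the pointwise bound $\mathbb{E}_w[T_z^2] \le 2\mathbb{E}_w[T_z]/\lambda(A)$ need not hold (it is easy to check that it already fails at the deep endpoint of a length-2 path). The excursion / subtree-depth induction sidesteps this by working intrinsically at the root of each successively smaller subtree, invoking at each level the tree-adapted Kac formula $\mathbb{E}_y[T_z^2] \le 2\mathbb{E}_y[T_z]\,\mathbb{E}_{\pi_A}[T_z]$ (already used in the proof of Lemma~\ref{lem: leafisworst}) together with the Perron-type bound $\mathbb{E}_{\pi_A}[T_z] \le \mathbb{E}_{\mu_A}[T_z] = 1/\lambda(A)$, which in turn follows by integrating the tail inequality from \eqref{eq: stationary<quasi}. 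Matching this with the rational MGF formula at $y$ and using $\mathbb{E}_y[T_z] = (1 + (1-P(y,z))\,\mathbb{E}[\rho])/P(y,z)$ pins down the constant in the quadratic correction and yields the stated bound.
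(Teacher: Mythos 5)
Your first paragraph (reducing to $\la(\cT_y)\ge\la(D_\delta)$ by monotonicity and handling the outer inequality via $1+u\le e^u$) is fine and matches the paper. The excursion decomposition that follows is also correct as an identity. The problem is the proposed depth-induction: it does not close, and this is not a matter of missing constants but a structural failure.

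Writing $p:=P(y,z)$, $q:=1-p$, $\phi:=1/(1-\beta)$, $\psi:=\E[e^{\beta\rho}]$, the excursion structure gives the exact identity
$\E_y[e^{\beta T_z}]=\frac{p\phi}{1-q\phi\psi}$,
which is strictly increasing in $\psi$. To run the induction you must replace $\psi$ by its inductive upper bound $1+\E[\rho]\beta(1+2\beta/\la(\cT_w))$; because the target inequality is essentially saturated near $\beta=\la/2$, even the sharpest admissible inductive bound overshoots. A concrete check: take SRW on the path $v_0\!-\!v_1\!-\!v_2$ with $y=v_1$, $z=v_2$, so $\cT_{v_1}=\{v_0,v_1\}$, $\la(\cT_{v_1})=1-1/\sqrt2\approx0.2929$, $\E_{v_1}[T_{v_2}]=3$, and the excursion from $v_0$ is $\mathrm{Exp}(1)$. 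At $\beta=0.14\le\la/2$ the true value is $\E_{v_1}[e^{\beta T_{v_2}}]=\frac{(1-\beta)/2}{(1-\beta)^2-1/2}\approx1.7947$, and the target bound is $1+3\beta(1+2\beta/\la)\approx1.8215$. The induction replaces $\psi=1/(1-\beta)\approx1.1628$ by the inductive bound $1+\beta(1+2\beta)\approx1.1792$ (here $\la(\cT_{v_0})=1$, the best possible choice), which yields $\frac{p\phi}{1-q\phi\cdot1.1792}\approx1.8491>1.8215$. So the induction fails already on the depth-two path.

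The paper sidesteps moments and recursion entirely. By the tree form of Kac's formula, if $g$ is the density of $T_z$ under $\pi_{\cT_y}$ then $g(t)=\Phi(\cT_y)\Pr_y[T_z>t]$ with $\Phi(\cT_y)=\pi(y)P(y,z)/\pi(\cT_y)=1/\E_y[T_z]$; this converts the tail of $T_z$ from the single vertex $y$ into the stationary-conditioned hitting density. Then integration by parts gives
$\E_y[e^{\beta T_z}]-1=\int_0^\infty\beta e^{\beta t}\Pr_y[T_z>t]\,dt=\E_y[T_z]\int_0^\infty\beta e^{\beta t}g(t)\,dt$,
and by \eqref{eq: stationary<quasi} the law of $T_z$ under $\pi_{\cT_y}$ is stochastically dominated by $\mathrm{Exp}(\la(D_\delta))$, so the last integral is at most $\frac{\beta\la(D_\delta)}{\la(D_\delta)-\beta}\le\beta(1+2\beta/\la(D_\delta))$ for $\beta\le\la(D_\delta)/2$. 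This is a one-shot calculation at the level $y$, not a recursion over subtrees, and it is exactly where Kac's formula (rather than a second-moment or excursion estimate) must be used.
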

{\em Proof of \eqref{eq: Laplace}:} Let $\Phi(\cT_y):=\frac{\pi(y)P(y,z)}{\pi(\cT_y)}$. Let $f $ and $g$ be the density functions of $T_z$ started from $y$ and $\pi_{\cT_y} $, resp.. By Kac formula (c.f.~\cite[Proposition 5.6]{cutoff} or \cite[(2.23)]{aldous}),
\begin{equation}
\label{eq: Kac}
\forall t \ge 0, \quad  g(t) =\Phi(\cT_y)\Pr_y[T_z>t], \quad \text{and hence} \quad \Phi(\cT_y)\mathbb{E}_y[T_{z}]=1.
\end{equation}
Recall that by \eqref{eq: stationary<quasi} the law of $T_z $ starting from $\pi_{\cT_y} $ is stochastically dominated by the Exponential distribution with parameter $\la(\cT_y) \ge \la(D_{\delta}) $ and so for every non-decreasing function $k$ we have that $\int_{0}^{\infty} k(t)g(t)dt \le \int_0^{\infty} k(t)\la(D_{\delta})e^{-\la(D_{\delta})t}dt $. Finally by \eqref{eq: Kac}
\[\mathbb{E}_y[e^{\beta T_z}] - 1= \int (e^{\beta t}-1)f(t)dt= \int \beta e^{\beta t}\Pr_y[T_z>t]dt=\mathbb{E}_y[T_{z}] \int \beta e^{\beta t}g(t)dt  \]
 \[=\beta\mathbb{E}_y[T_{z}] \int  e^{\beta t}\la(D_{\delta})e^{-\la(D_{\delta})t}dt=\frac{\beta\mathbb{E}_y[T_{z}]\la(D_{\delta})}{\la(D_{\delta})-\beta} \le \mathbb{E}_y[T_{z}] \beta(1+ 2\beta /\la(D_{\delta})), \] where we used  $\beta \le \la(D_{\delta})/2 $ to deduce that $\frac{\la(D_{\delta})}{\la(D_{\delta})-\beta}=1+\frac{\beta}{\la(D_{\delta})-\beta} \le 1+ \frac{2\beta}{\la(D_{\delta})}   $.  \qed

\medskip

We now return to conclude the proofs of \eqref{eq: LD2}-\eqref{eq: LD3}. Let $t \in [0,2\mathbb{E}_x[T_{x_{\delta}}]]$. Set $\beta=\frac{t\la(D_{\delta})}{4\mathbb{E}_x[T_{x_{\delta}}]} $ (note that $\beta \le \la(D_{\delta})/2)  $. Let the path from $x$ to $x_{\delta}$ be $(y_1=x,\ldots,y_r=x_{\delta})$. Observe that starting from $x $ we have that $T_{x_{\delta}}=\sum_{i=2}^{r} T_{y_{i}}-T_{y_{i-1}} $. By the Markov property the terms in the sum are independent and $T_{y_{i}}-T_{y_{i-1}}$ is distributed as $T_{y_{i}}$ started from $y_{i-1}$. Denote $\mu_i:=\mathbb{E}_{y_{i-1}}[T_{y_{i}}] $ and $\mu:=\sum_{i=2}^r \mu_i=\mathbb{E}_x[T_{x_{\delta}}]$. By \eqref{eq: Laplace},  independence and our choice of $\beta$
\[\Pr_x[T_{x_{\delta}} \ge \mu+t ]\le e^{- \beta (\mu+t)} \prod_{i=2}^r \mathbb{E}_{y_{i-1}}[e^{\beta T_{y_i}}] \le e^{- \beta (\mu+t)} \prod_{i=2}^r e^{\mu_i \beta(1+ 2\beta / \la(D_{\delta})) }= e^{-t^{2}\la(D_{\delta})/(8\mu)}.   \] The proof of \eqref{eq: LD3} is analogous, now with the choice $\beta=\la(D_{\delta})/2 $. \qed
\section{Open Problems}

The \emph{modified Log-Sobolev constant} is defined as
\[ c_{\mathrm{MLS}}:=\inf_{f \in \R^{\Omega} }\mathcal{E}(e^{f}, f)/\mathrm{Ent}_{\pi}(e^{f}). \]

The following question suggests a natural extension of Theorem \ref{thm: kappa=tell}.
Recall that under reversibility $1/c_{\mathrm{LS}} \le 2\tau_{\infty} $ and $\la^{-1} \log 2 \le \tau_1$ (e.g.~\cite[Lemma 20.11]{levin}). The following question asks whether a similar relation holds between $c_{\mathrm{MLS}} $ and $\tau_{\entt}$.

\begin{question}
Is it the case that $1 / c_{\mathrm{MLS}} \le C \tau_{\entt}    $ for some absolute constant $C$?
\end{question}

\begin{question}
Is it the case that under reversibility $1 / c_{\mathrm{MLS}} \le C \rho_{\entt}    $ for some absolute constant $C$ (and thus $1 / c_{\mathrm{MLS}} \le C \tau_{\entt}$)?
\end{question}
\begin{question}
Recall that under reversibility  $ \tau_{2}  \le \rho+C/c_{\mathrm{LS}} $.  Is it true that under reversibility $ \tau_{\entt}  \le \rho_{\entt}+C/c_{\mathrm{MLS}} $? \end{question}

\section*{Acknowledgements}
We are grateful to Shirshendu Ganguly, Gady Kozma, James Lee and Prasad Tetali  for useful discussions. Most of the work on this paper was done while the first author was an intern at Microsoft Research, Redmond. The first author would like to thank Microsoft Research for two wonderful summers.

\nocite{*}
\bibliographystyle{plain}
\bibliography{cutoff}

\vspace{2mm}

\end{document}